\newfont{\cyr}{wncyr10 scaled 1100}
\newfont{\cyrr}{wncyr9 scaled 1000}
\theoremstyle{plain}
\newtheorem{theorem}{Theorem}[section]
\newtheorem{proposition}[theorem]{Proposition}
\newtheorem{lemma}[theorem]{Lemma}
\newtheorem{corollary}[theorem]{Corollary}
\theoremstyle{definition}
\newtheorem{definition}[theorem]{Definition}
\newtheorem{assumption}[theorem]{Assumption}
\theoremstyle{remark}
\newtheorem{remark}[theorem]{Remark}
\newcommand{\Q}{\mathbb Q}
\newcommand{\N}{\mathbb N}
\newcommand{\Z}{\mathbb Z}
\newcommand{\R}{\mathbb R}
\newcommand{\C}{\mathbb C}
\DeclareMathOperator{\Aut}{Aut}
\DeclareMathOperator{\Frob}{Frob}
\DeclareMathOperator{\Hom}{Hom}
\DeclareMathOperator{\Gal}{Gal}
\DeclareMathOperator{\GL}{GL}
\DeclareMathOperator{\Sel}{Sel}
\DeclareMathOperator{\M}{M}
\newcommand{\res}{\mathrm{res}}
\newcommand{\cores}{\mathrm{cores}}
\newcommand{\tr}{\mathrm{tr}}
\newcommand{\ord}{\mathrm{ord}}
\newcommand{\Sha}{\mbox{\cyr{X}}}
\definecolor{Indigo}{rgb}{0.2,0.1,0.7}
\definecolor{Violet}{rgb}{0.5,0.1,0.7}
\definecolor{White}{rgb}{1,1,1}
\definecolor{Green}{rgb}{0.1,0.9,0.2}
\newcommand{\longmono}{\mbox{\;$\lhook\joinrel\longrightarrow$\;}}
\newcommand{\longepi}{\mbox{\;$\relbar\joinrel\twoheadrightarrow$\;}}
\newcommand{\dirlim}{\mathop{\varinjlim}\limits}
\newcommand{\invlim}{\mathop{\varprojlim}\limits}
\newcommand{\p}{\mathfrak p}
\newcommand{\cO}{{\mathcal O}}
\newcommand{\E}{{\mathcal E}}
\newcommand{\HH}{{\mathcal H}}
\begin{document}

\title[Plus/minus Heegner points and supersingular Iwasawa theory]{Plus/minus Heegner points and Iwasawa theory\\of elliptic curves at supersingular primes} 
\author{Matteo Longo and Stefano Vigni}

\thanks{The two authors are supported by PRIN 2010--11 ``Arithmetic Algebraic Geometry and Number Theory''. The first author is also supported by PRAT 2013 ``Arithmetic of Varieties over Number Fields''; the second author is also supported by PRA 2013 ``Geometria Algebrica e Teoria dei Numeri''.}

\begin{abstract}
Let $E$ be an elliptic curve over $\Q$ and let $p\geq5$ be a prime of good supersingular reduction for $E$. Let $K$ be an imaginary quadratic field satisfying a modified ``Heegner hypothesis'' in which $p$ splits, write $K_\infty$ for the anticyclotomic $\Z_p$-extension of $K$ and let $\Lambda$ denote the Iwasawa algebra of $K_\infty/K$. By extending to the supersingular case the $\Lambda$-adic Kolyvagin method originally developed by Bertolini in the ordinary setting, we prove that Kobayashi's plus/minus $p$-primary Selmer groups of $E$ over $K_\infty$ have corank $1$ over $\Lambda$. As an application, when all the primes dividing the conductor of $E$ split in $K$, we combine our main theorem with results of \c{C}iperiani and of Iovita--Pollack and obtain a ``big O'' formula for the $\Z_p$-corank of the $p$-primary Selmer groups of $E$ over the finite layers of $K_\infty/K$ that represents the supersingular counterpart of a well-known result for ordinary primes.\end{abstract}

\address{Dipartimento di Matematica, Universit\`a di Padova, Via Trieste 63, 35121 Padova, Italy}
\email{mlongo@math.unipd.it}
\address{Dipartimento di Matematica, Universit\`a di Genova, Via Dodecaneso 35, 16146 Genova, Italy}
\email{vigni@dima.unige.it}

\subjclass[2010]{11G05, 11R23}

\keywords{elliptic curves, Iwasawa theory, supersingular primes, Heegner points}

\maketitle

\maketitle


\section{Introduction} 

Let $E$ be an elliptic curve over $\Q$ of conductor $N>3$. By modularity, $E$ is associated with a normalized newform $f=f_E$ of weight $2$ for $\Gamma_0(N)$, whose $q$-expansion will be denoted by
\[ f(q)=\sum_{n\geq1}a_nq^n,\qquad a_n\in\Z. \]
Let $K$ be an imaginary quadratic field in which all primes dividing $N$ split (i.e., $K$ satisfies the so-called ``Heegner hypothesis'' relative to $N$) and let $p\geq5$ be a prime of good reduction for $E$ that is unramified in $K$. Write $K_\infty/K$ for the anticyclotomic $\Z_p$-extension of $K$, set $G_\infty:=\Gal(K_\infty/K)\simeq\Z_p$ and define $\Lambda:=\Z_p[\![G_\infty]\!]$ to be the Iwasawa algebra of $G_\infty$. Under some technical assumptions, Bertolini showed in \cite{Ber1} that if the reduction of $E$ at $p$ is ordinary then the Pontryagin dual of the $p$-primary Selmer group of $E$ over $K_\infty$ has rank $1$ over $\Lambda$ and is generated by Heegner points. In this paper we prove similar results for Pontryagin duals of restricted (plus/minus) Selmer groups \emph{\`a la} Kobayashi in the supersingular case.

Set $G_\Q:=\Gal(\bar\Q/\Q)$ and let  
\[ \rho_{E,p}:G_\Q\longrightarrow\Aut(T_p(E))\simeq\GL_2(\Z_p) \] 
denote the Galois representation on the $p$-adic Tate module $T_p(E)\simeq\Z_p^2$ of $E$. Assume that $E$ has no complex multiplication and fix once and for all a prime number $p$ for which the following conditions hold.

\begin{assumption} \label{ass} 
\begin{enumerate}
\item $p\geq5$ is a prime of good supersingular reduction for $E$;
\item $\rho_{E,p}$ is surjective.
\end{enumerate}
\end{assumption}

Thanks to Elkies's result on the infinitude of supersingular primes for elliptic curves over $\Q$ (\cite{Elkies})  and Serre's ``open image'' theorem (\cite{Serre-Inv}), we know that Assumption \ref{ass} is satisfied by infinitely many $p$.

Suppose now that $N$ can be written as $N=MD$ where $D\geq1$ is a square-free product of an \emph{even} number of primes and $(M,D)=1$. Let $K$ be an imaginary quadratic field, with ring of integers $\cO_K$, such that

\begin{assumption} \label{ass2} 
\begin{enumerate}
\item the primes dividing $pM$ split in $K$;
\item the primes dividing $D$ are inert in $K$.
\end{enumerate}
\end{assumption}

In particular, Assumption \ref{ass2} says that $K$ satisfies a \emph{modified Heegner hypothesis} relative to $N$. In many of the arguments below, one only uses the fact that $E$ has no $p$-torsion over $K_\infty$, which, by \cite[Lemma 2.1]{IP}, is true even without condition (2) in Assumption \ref{ass} provided that $p$ splits in $K$. However, in order to get better control on the field obtained by adding to the $m$-th layer $K_m$ of $K_\infty/K$ the coordinates of the $p^m$-torsion points of $E$ we will make use of this assumption. More generally, we expect that condition (2) in Assumption \ref{ass} can be somewhat relaxed, for example by just requiring that $\rho_{E,p}$ has non-solvable image (as done, e.g., in \cite{Cip} and \cite{CW}).

The last assumption we need to impose, which holds when $p$ does not divide the class number of $K$, is

\begin{assumption} \label{ass3}
The two primes of $K$ above $p$ are totally ramified in $K_\infty$.
\end{assumption}

This is a natural condition to require when working in the supersingular setting (cf., e.g., \cite[Assumptions 1.7, (2)]{DI}, \cite[Hypothesis (S)]{IP}, \cite[Theorem 1.2, (2)]{PW}); for example, it will allow us to apply the results of \cite{IP}. 

When $D=1$, the results obtained by \c{C}iperiani in \cite{Cip} tell us that 
\begin{itemize}
\item the $p$-primary Shafarevich--Tate group $\Sha_{p^\infty}(E/K_\infty)$ is a cotorsion $\Lambda$-module;
\item the $\Lambda$-coranks of the $p$-primary Selmer group $\Sel_{p^\infty}(E/K_\infty)$ and of $E(K_\infty)\otimes\Q_p/\Z_p$ are both $2$.
\end{itemize}
Under Assumptions \ref{ass}--\ref{ass3}, the present article offers an alternative approach to the study of anticyclotomic Selmer groups of elliptic curves at supersingular primes. More precisely, following Kobayashi (\cite{Kob03}) and Iovita--Pollack (\cite{IP}), we introduce restricted (plus/minus) Selmer groups $\Sel_{p^\infty}^\pm(E/K_\infty)$, whose Pontryagin duals $\mathcal X_\infty^\pm$ turn out to be finitely generated $\Lambda$-modules. 

Our main result, which corresponds to Theorem \ref{thm4.1}, is 

\begin{theorem} \label{main}
Each of the two $\Lambda$-modules $\mathcal X_\infty^\pm$ has rank $1$. 
\end{theorem}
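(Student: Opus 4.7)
The plan is to adapt to the supersingular setting the $\Lambda$-adic Kolyvagin descent developed by Bertolini \cite{Ber1} in the ordinary case, systematically replacing the unramified local condition at the primes of $K_n$ above $p$ by the plus/minus local conditions of Kobayashi, as extended to the anticyclotomic tower by Iovita--Pollack \cite{IP}. The argument splits into a lower and an upper bound on $\mathrm{rank}_\Lambda\mathcal X_\infty^\pm$.

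\emph{Lower bound.} Assumption \ref{ass2} provides, via Jacquet--Langlands, a modular parametrization $\pi\colon X_{D,M}\to E$ from the Shimura curve attached to an indefinite quaternion algebra of discriminant $D$ and Eichler level $M$ (reducing to the classical modular curve $X_0(N)$ when $D=1$). Heegner points on $X_{D,M}$ attached to $\cO_K$-orders of conductor $p^n$ yield, after projection via $\pi$ and suitable traces, a norm-compatible system $\{P_n\}$ with $P_n\in E(K_n)\otimes\Z_p$. Exploiting $a_p=0$ (forced by supersingularity and $p\ge 5$), I would assemble the $P_n$ into plus/minus Heegner classes $\kappa^\pm$ lying in $\varprojlim_n H^1_{\pm}(K_n,T_p(E))$, i.e.\ in the compact plus/minus Selmer groups dual to $\Sel_{p^\infty}^\pm(E/K_\infty)$. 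The non-$\Lambda$-torsion property of $\kappa^\pm$ --- deducible from the generic non-vanishing of Heegner points on $E(K_n)$ via Gross--Zhang-type formulas at finite layers --- then forces $\mathrm{rank}_\Lambda\mathcal X_\infty^\pm\ge 1$ by pairing.

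\emph{Upper bound.} I would run the Euler system machinery $\Lambda$-adically. For each $n\ge 1$ and each $n$-admissible Kolyvagin prime $\ell$ (inert in $K$, satisfying the standard divisibility conditions modulo $p^n$), applying Kolyvagin's derivative operator to the Heegner point on $X_{D,M}$ over the ring class field of conductor $p^n\ell$ produces a derived class $d(\ell)\in H^1(K_n,E[p^n])$ that is unramified outside $\ell p$, satisfies the plus (resp.\ minus) local condition at the primes above $p$, and has a controlled non-zero localization at $\ell$ prescribed by the Kolyvagin reciprocity law. A Chebotarev argument then allows one to detect, for every putative non-zero $s\in\mathcal X_\infty^\pm/\Lambda\cdot\kappa^\pm$, a prime $\ell$ at which $s$ localizes non-trivially; global Poitou--Tate duality paired with $d(\ell)$ forces a relation that, after passing to the inverse limit in $n$, shows $s$ is $\Lambda$-torsion. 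Hence $\mathcal X_\infty^\pm$ is generated by $\kappa^\pm$ modulo $\Lambda$-torsion and $\mathrm{rank}_\Lambda\mathcal X_\infty^\pm\le 1$.

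\emph{Main obstacle.} The central new difficulty, absent in the ordinary situation, is verifying that the Kolyvagin-derived classes $d(\ell)$ actually satisfy the plus (resp.\ minus) local condition at the supersingular primes of $K_n$ above $p$. In Bertolini's ordinary argument this is immediate from the existence of an unramified submodule of $T_p(E)$, but in the supersingular case there is no such submodule and the plus/minus subspaces are defined only implicitly via norm-compatibility relations along the whole anticyclotomic tower --- which is why Assumption \ref{ass3} on total ramification at $p$ is indispensable. I expect that the bulk of the technical effort will go into a careful analysis of the image of the Kummer map at the primes above $p$, combined with the self-duality of the plus/minus local subgroups under local Tate pairing established in \cite{IP}, in order to propagate the plus/minus condition through the Kolyvagin derivative construction and through the passage to the limit over $n$.
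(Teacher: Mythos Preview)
Your overall architecture matches the paper's: a lower bound from plus/minus Heegner classes (the paper builds discrete $\Lambda$-submodules $\mathcal E_\infty^\pm\subset\Sel_{p^\infty}^\pm(E/K_\infty)$ whose duals $\mathcal H_\infty^\pm$ have $\Lambda$-rank $1$ by Cornut and Cornut--Vatsal, rather than Gross--Zhang) and an upper bound by a $\Lambda$-adic Kolyvagin argument \`a la Bertolini. The paper also organizes the upper bound around the complex-conjugation eigenspace decomposition: one fixes a non-torsion $x\in(\mathcal X_\infty^\pm)^\epsilon$ mapping to $\mathcal H_\infty^\pm$ and shows every $y\in\ker(\pi^\pm)^{-\epsilon}$ is torsion by producing a rank-$1$ $\Lambda$-module $V^\pm(\ell_\infty^\pm)$ surjecting onto $\Lambda x\oplus\Lambda y$.

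However, the ``main obstacle'' you single out is a red herring. The Kolyvagin derivative classes $d_m(\ell)$ live in $H^1(K_m,E)_{p^m}$ (not in $H^1(K_m,E_{p^m})$), and the standard argument already gives $\res_v(d_m(\ell))=0$ for \emph{every} place $v\nmid\ell$, including the primes above $p$ --- this is exactly Proposition~\ref{prop-d-ell}(1) in the paper, and it has nothing to do with ordinary versus supersingular reduction. Consequently, in the global reciprocity sum $\sum_v\langle\res_v(s),\res_v(d_m(\ell))\rangle_{K_m,v}=0$ with $s$ a (restricted) Selmer class, the local term at $p$ vanishes automatically; no verification of a plus/minus condition for $d_m(\ell)$ is needed, and the self-duality of the plus/minus local subspaces plays no role at this step.

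The genuine new work in the supersingular case lies elsewhere. First, since $a_p=0$ the Heegner points are not norm-compatible in the usual sense; one must define the plus/minus subsequences $z_m^\pm$ and check that their Kummer images lie in $\Sel_{p^m}^\pm(E/K_m)^{\omega_m^\pm=0}$ and form a directed system (\S\ref{plus-minus-subsec}--\S\ref{direct-subsec}). Second, the entire $\Lambda$-adic bookkeeping of \cite{Ber1} must be rerun with $R_m^\pm=\Lambda_m^\pm/p^m$ in place of $R_m$ and $\omega_m^\pm$ in place of $\omega_m$: this affects the construction of the auxiliary modules $\Sigma_m^\pm$, $W_m^{\pm,(\pm\epsilon)}$, $V_m^\pm(\ell)$, the compatibility of the duality isomorphisms $(R_m^\pm)^\vee\simeq\tilde\omega_m^\mp R_m$ as $m$ varies (Lemma~\ref{isom-lemma}, Proposition~\ref{prop:compatibilities}), and the parity-dependent trace relations in the Kolyvagin descent (\S\ref{duality-sec2}). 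Third, Mazur's control theorem fails and is replaced by the Iovita--Pollack control theorem for the restricted Selmer groups. These are the places where Assumption~\ref{ass3} and the plus/minus machinery actually intervene.
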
  

This can be viewed as the counterpart in the supersingular case of \cite[Theorem A]{Ber1}; as such, it provides yet another confirmation of the philosophy according to which Kobayashi's restricted Selmer groups are the ``right'' objects to consider in the non-ordinary setting.

Our strategy for proving Theorem \ref{main} is inspired by the work of Bertolini in \cite{Ber1} and goes as follows. First of all, we construct $\Lambda$-submodules $\E_\infty^\pm$ of the restricted Selmer groups $\Sel_{p^\infty}^\pm(E/K_\infty)$ out of suitable sequences of \emph{plus/minus Heegner points} on $E$. On the other hand, results of Cornut (\cite{Co}) and of Cornut--Vatsal (\cite{CV}) on the non-triviality of Heegner points as one ascends $K_\infty$ imply that the Pontryagin dual $\mathcal H_\infty^\pm$ of $\E_\infty^\pm$ has rank $1$ over $\Lambda$. Finally, a $\Lambda$-adic Euler system argument, to which the largest portion of our paper is devoted, allows us to prove that there is a natural surjective homomorphism of $\Lambda$-modules  
\[ \mathcal X_\infty^\pm\longepi\mathcal H_\infty^\pm \]
whose kernel turns out to be torsion, and Theorem \ref{main} follows.
 
It is worth remarking that the main difference between the ordinary and the supersingular settings is that, in the latter situation, Heegner points over $K_\infty$ are not naturally trace-compatible. In particular, there is no direct analogue of the $\Lambda$-module of Heegner points considered in \cite{Ber1} and \cite{PR}. In this paper we explain how to define subsequences of plus/minus Heegner points that satisfy a kind of trace-compatibility relation of the sort needed to study restricted Selmer groups as in \cite{Ber1}. 

As an application, combining our main theorem with results of \c{C}iperiani and of Iovita--Pollack, we obtain the following ``big O'' formula (Theorem \ref{corank-coro}) for the $\Z_p$-corank of the $p$-primary Selmer groups of $E$ over the finite layers of $K_\infty/K$. 

\begin{theorem} \label{corank-intro}
If $D=1$ then $\mathrm{corank}_{\Z_p}\bigl(\Sel_{p^\infty}(E/K_m)\bigr)=p^m+O(1)$. 
\end{theorem}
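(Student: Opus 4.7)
My plan is to deduce this corank formula by combining Theorem \ref{main} with Iovita--Pollack's control theorem and \c{C}iperiani's cotorsion result for $\Sha_{p^\infty}(E/K_\infty)$, which is available when $D=1$.

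The starting point is the pair of distinguished elements $\omega_m^+, \omega_m^- \in \Lambda$ (after choosing an isomorphism $\Lambda \simeq \Z_p[\![T]\!]$) introduced by Pollack and Iovita--Pollack, whose $\Z_p$-ranks $q_m^\pm := \mathrm{rank}_{\Z_p}(\Lambda/\omega_m^\pm \Lambda)$ satisfy $q_m^+ + q_m^- = p^m + O(1)$ and whose zero loci are essentially disjoint sets of $p$-power roots of unity. The Iovita--Pollack control theorem, applicable precisely because the primes of $K$ above $p$ are totally ramified in $K_\infty/K$ by Assumption \ref{ass3}, identifies $\mathrm{corank}_{\Z_p}\bigl(\Sel_{p^\infty}(E/K_m)\bigr)$ with
\[ \mathrm{corank}_{\Z_p}\bigl(\mathcal X_\infty^+/\omega_m^+ \mathcal X_\infty^+\bigr) + \mathrm{corank}_{\Z_p}\bigl(\mathcal X_\infty^-/\omega_m^- \mathcal X_\infty^-\bigr) \]
up to an error term bounded uniformly in $m$. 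Here \c{C}iperiani's $\Lambda$-cotorsion of $\Sha_{p^\infty}(E/K_\infty)$, which the assumption $D=1$ makes available, is what forces the kernel and cokernel of the relevant localization/control maps to have uniformly bounded $\Z_p$-corank.

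By Theorem \ref{main}, each $\mathcal X_\infty^\pm$ has $\Lambda$-rank $1$, so the structure theorem for finitely generated $\Lambda$-modules yields a pseudo-isomorphism $\mathcal X_\infty^\pm \sim \Lambda \oplus T^\pm$ with $T^\pm$ a torsion $\Lambda$-module. The free part contributes $\mathrm{corank}_{\Z_p}(\Lambda/\omega_m^\pm \Lambda) = q_m^\pm$, and a standard Iwasawa estimate shows $\mathrm{rank}_{\Z_p}(T^\pm/\omega_m^\pm T^\pm) = O(1)$: the zeros of $\omega_m^\pm$ are eventually disjoint from the finitely many zeros of a characteristic generator of $T^\pm$, and the pseudo-null discrepancy from the structure theorem is bounded. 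Adding the plus and minus contributions gives
\[ \mathrm{corank}_{\Z_p}\bigl(\Sel_{p^\infty}(E/K_m)\bigr) = q_m^+ + q_m^- + O(1) = p^m + O(1), \]
as desired.

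The main obstacle is bookkeeping the error terms: extracting from the Iovita--Pollack machinery the precise control sequence, and verifying that, together with the pseudo-null correction in the structure theorem and the local contributions at the primes above $p$, all the sources of error combine into a single $O(1)$ uniform in $m$. This uniformity is what makes Assumption \ref{ass3} (which secures the Iovita--Pollack setup) and \c{C}iperiani's cotorsion theorem (which prevents hidden free $\Lambda$-summands in the torsion bookkeeping) indispensable.
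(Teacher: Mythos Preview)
Your strategy is essentially the paper's: combine Theorem \ref{main} with \c{C}iperiani's result and the Iovita--Pollack machinery. The paper simply cites this machinery as a black box: \c{C}iperiani's $\mathrm{corank}_\Lambda\bigl(\Sel_{p^\infty}(E/K_\infty)\bigr)=2$ verifies Hypothesis (W) of \cite{IP} via \cite[Proposition 6.1]{IP}, and then \cite[Proposition 7.1]{IP} (taking as input Hypothesis (W) and the rank-$1$ statement for $\mathcal X_\infty^\pm$) outputs the corank formula directly; what you sketch is, in effect, the content of that proposition. One small slip: $\mathcal X_\infty^\pm/\omega_m^\pm\mathcal X_\infty^\pm$ is a finitely generated $\Z_p$-module, so you want $\mathrm{rank}_{\Z_p}$, not $\mathrm{corank}_{\Z_p}$, in your displayed sum.
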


This is the supersingular analogue of a well-known result for ordinary primes; in fact, Theorem \ref{corank-intro} proves \cite[Conjecture 2.1]{Ber2} when $p$ is supersingular and $K_\infty$ is the anticyclotomic $\Z_p$-extension of $K$. Here we would like to emphasize that, due to the failure of Mazur's control theorem in its ``classical'' formulation, knowledge of the $\Lambda$-corank of $\Sel_{p^\infty}(E/K_\infty)$ as provided by \cite[Theorem 3.1]{Cip} is not sufficient to yield the growth result described in Theorem \ref{corank-intro} (see Remark \ref{corank-remark} for more details). Moreover, assuming the finiteness of the $p$-primary Shafarevich--Tate group of $E$ over $K_m$ for $m\gg0$, standard relations between Mordell--Weil, Selmer and Shafarevich--Tate groups of elliptic curves over number fields lead (at least when $D=1$) to a formula (Corollary \ref{rank-coro}) for the growth of the rank of $E(K_m)$.

As already mentioned, the techniques employed in this paper are close to those of Bertolini \cite{Ber1}. Similar results could presumably be obtained via different approaches, for example by adapting the arguments of \c{C}iperiani in \cite{Cip} (which rely on the techniques developed in \cite{CW}) or, following Mazur--Rubin (\cite{MR}), by using $\Lambda$-adic Kolyvagin systems as is done by Howard in \cite{Ho1}. In particular, we hope that extending the point of view of \cite{Ho1} to the supersingular setting would lead to an understanding of the torsion submodule of $\mathcal X_\infty^\pm$: we plan to come back to these issues in a future project.  

\subsubsection*{Acknowledgements}

It is a pleasure to thank Mirela \c{C}iperiani for helpful discussions and comments on some of the topics of this paper. We would also like to thank Christophe Cornut for useful correspondence on his joint work with Vinayak Vatsal.
 
\section{Anticyclotomic Iwasawa algebras} \label{sec2}

We briefly review the definition of the anticyclotomic $\Z_p$-extension $K_\infty$ of $K$ and then introduce the Iwasawa algebras that will be used in the rest of the paper.

\subsection{The anticyclotomic $\Z_p$-extension of $K$}

For every integer $m\geq 0$ let $H_{p^m}$ denote the ring class field of $K$ of conductor $p^m$, then set $H_{p^\infty}:=\cup_{m\geq0}H_{p^m}$. There is an isomorphism
\[ \Gal(H_{p^\infty}/K)\simeq\Z_p\times\Delta \]
where $\Delta$ is a finite group. 

The anticyclotomic $\Z_p$-extension $K_\infty/K$ is the unique $\Z_p$-extension of $K$ contained in $H_{p^\infty}$. We can write $K_\infty:=\cup_{m\geq0}K_m$, where $K_m$ is the unique subfield of $K_\infty$ such that 
\[ G_m:=\Gal(K_m/K)\simeq\Z/p^m\Z. \]
In particular, $K_0=K$. Set
\[ G_\infty:=\varprojlim_m G_m=\Gal(K_\infty/K)\simeq\Z_p. \] 
Finally, for every $m\geq0$ let $\Gamma_m:=\Gal(K_\infty/K_m)$, which is the kernel of the canonical projection $G_\infty\twoheadrightarrow G_m$.

\subsection{Iwasawa algebras and cyclotomic polynomials}

With notation as before, define $\Lambda_m:=\Z_p[G_m]$ and 
\[ \Lambda:=\varprojlim_m\Lambda_m=\Z_p[\![G_\infty]\!]. \]
Here the inverse limit is taken with respect to the maps induced by the natural projections
$G_{m+1}\rightarrow G_m$. For all $m\geq1$ fix a generator $\gamma_m$ of $G_n$ in such a way that ${\gamma_{m+1}|}_{K_m}=\gamma_m$; then $\gamma_\infty:=(\gamma_1,\dots,\gamma_m,\dots)$ is a topological generator of $G_\infty$. It is well known that the map
$\Lambda \rightarrow\Z_p[\![X]\!]$ defined by $\gamma_\infty\mapsto1+X$ is an isomorphism of $\Z_p$-algebras (see, e.g., \cite[Proposition 5.3.5]{NSW}). We will always identify these two $\Z_p$-algebras via this fixed isomorphism. 

Let $\Phi_m(X)=\sum_{i=0}^{p-1}X^{ip^{m-1}}$ be the $p^m$-th cyclotomic polynomial and set 
\[ \tilde\omega_m^+(X):=\prod_{\substack{2\leq n\leq m\\[1mm]\text{$n$ even}}}\Phi_n(1+X),\qquad\tilde\omega_m^-(X):=\prod_{\substack{1\leq n\leq m\\[1mm]\text{$n$ odd}}}\Phi_n(1+X),\qquad\omega^\pm_m(X):=X\cdot\tilde\omega_m^\pm(X), \]
\[ \omega_m(X):=\omega_m^\pm(X)\cdot\tilde\omega_m^\mp(X)=\omega_m^\mp(X)\cdot\tilde\omega_m^\pm(X)=X\cdot\prod_{1\leq n\leq m}\Phi_n(1+X)=(X+1)^{p^m}-1. \]
Then $\Lambda_m$ is isomorphic to $\Z_p[\![X]\!]/(\omega_m)$ under the isomorphism $\Lambda\simeq\Z_p[\![X]\!]$ described above. We also define 
\[ \Lambda_m^\pm:=\Z_p[\![X]\!]/(\omega_m^\pm). \] 
There are surjections $\Lambda\twoheadrightarrow\Lambda_m\twoheadrightarrow\Lambda_m^\pm$ and a canonical isomorphism $\Lambda_m^\pm\simeq\tilde \omega_m^\mp\Lambda_m$ given by multiplication by $\tilde\omega_m^\mp$. 

\begin{remark} \label{cong-remark}
If $m$ is even then $\tilde\omega_m^+=\tilde\omega_{m+1}^+$, hence $\omega_m^+=\omega_{m+1}^+$ and $\Lambda_m^+=\Lambda_{m+1}^+$. On the other hand, if $m$ is odd then $\tilde\omega_{m+1}^+\equiv p\tilde\omega_m^+$ in $\Lambda_m$, by which we mean that $\tilde\omega_{m+1}^+$ and $p\tilde\omega_m^+$ have the same image in $\Lambda_m$ (hence in $\Lambda_m^+$ and $\Lambda_m^-$ as well). Analogous relations (with the roles of ``even'' and ``odd'' reversed) hold in the case of sign $-$.
\end{remark}

For every integer $m\geq1$ set $D_m:=\Gal(K_m/\Q)$ and $\tilde\Lambda_m:=\Z_p[D_m]$, then define $D_\infty:=\Gal(K_\infty/\Q)=\varprojlim_m\Gal(K_m/\Q)$ and let $\tilde\Lambda:=\varprojlim_m\tilde\Lambda_m=\Z_p [\![D_\infty]\!]$ be the Iwasawa algebra of $D_\infty$ with coefficients in $\Z_p $. Recall that for every $m\geq1$ there is a canonical isomorphism
\[ \Gal(K_m/\Q)\simeq G_m\rtimes\Gal(K/\Q), \]
the natural action of $\Gal(K/\Q)=\langle\tau\rangle$ on $G_m$ by conjugation being equal to $\gamma^\tau=\gamma^{-1}$ for all $\gamma\in G_m$. Similarly, $\Gal(K_\infty/\Q)\simeq G_\infty\rtimes\Gal(K/\Q)$ with $\gamma^\tau=\gamma^{-1}$ for all $\gamma\in G_\infty$. 

With this in mind, write $\Lambda^{(\pm)}$ for the ring $\Lambda$ viewed as a module over $\tilde\Lambda$ via the action of $\Gal(K/\Q)$ given by $\gamma^\tau=\pm\gamma^{-1}$ for all $\gamma\in G_\infty$, so that $\Lambda^{(+)}$ corresponds to the linear extension of the natural action of $\Gal(K/\Q)$ on $G_\infty$ described above. Analogously, write $\Lambda_m^{(\pm)}$ for the $\tilde\Lambda_m$-module $\Lambda_m$ on which $\Gal(K/\Q)$ acts as $\gamma^\tau:=\pm\gamma^{-1}$ for all $\gamma\in G_m$. One also equips $\Lambda_m^\pm$ with a similar structure 
of $\tilde\Lambda_m$-module by defining as above $(\Lambda_m^\pm)^{(\epsilon)}$ to be the $\tilde \Lambda_m$-module $\Lambda_m^\pm$  with $\tau$ action by $\gamma^\tau:=\pm \gamma^{-1}$. 

We also consider the mod $p^m$ reductions of the above rings given by
\[ R_m:=\Lambda_m\otimes_\Z\Z/p^m\Z,\qquad\tilde R_m:=\tilde\Lambda_m\otimes_\Z\Z/p^m\Z,\qquad R_m^\pm:=\Lambda_m^\pm\otimes_\Z\Z/p^m\Z. \]
In particular, $\Lambda=\varprojlim_m R_m$. Similarly, we define 
\[ R_m^{(\epsilon)}:=\Lambda_m^{(\epsilon)}\otimes_\Z\Z/p^m\Z,\qquad(R_m^\pm)^{(\epsilon)}:=(\Lambda_m^\pm)^{(\epsilon)}\otimes_\Z\Z/p^m\Z,\qquad\tilde R_m^\pm:=R_m^\pm\otimes_\Lambda\tilde\Lambda. \]
Finally, for any compact or discrete $\Lambda$-module $M$ write $M^\vee:=\Hom_{\Z_p}^{\rm cont}(M,\Q_p/\Z_p)$ for its Pontryagin dual, equipped with the compact-open topology (here $\Hom_{\Z_p}^{\rm cont}$ denotes continuous homomorphisms of $\Z_p$-modules and $\Q_p/\Z_p$ is equipped with the quotient, i.e., discrete, topology). 

\section{Plus/Minus Selmer groups and control theorem} \label{sec3}

In this section we define the Selmer groups that we are interested in and state a control theorem for them. 

\subsection{Classical Selmer groups} 
 
For every integer $m\geq0$ let $\Sel_{p^\infty}(E/K_m)$ denote the $p$-primary Selmer group of $E$ over $K_m$ (see, e.g., \cite[Ch. 2]{Greenberg}). Moreover, let 
\begin{equation} \label{kappa-m-eq}
\kappa_m:E(K_m)\otimes\Q_p/\Z_p\longmono\Sel_{p^\infty}(E/K_m) 
\end{equation}
be the usual Kummer map and, for any prime $\lambda$ of $K_m$, with a slight abuse of notation write  
\begin{equation}\label{restriction}
\res_{m,\lambda}:\Sel_{p^\infty}(E/K_m)\longrightarrow E(K_{m,\lambda})\otimes\Q_p/\Z_p\end{equation}
for the composition of the restriction map with the inverse of the local Kummer map
\[ \kappa_{m,\lambda}:E(K_{m,\lambda})\otimes\Q_p/\Z_p\longmono H^1(K_{m,\lambda},E_{p^\infty}). \]
Similarly, for all $n\geq0$ there is a Kummer map
\begin{equation} \label{kappa-m-n-eq}
\kappa_{m,n}:E(K_m)/p^nE(K_m)\longmono\Sel_{p^n}(E/K_m),
\end{equation}
where $\Sel_{p^n}(E/K_m)$ is the $p^n$-Selmer group of $E$ over $K_m$. 

More generally, given a prime number $\ell$, we set $K_{m,\ell}:=K_m\otimes_\Q\Q_\ell=\prod_{\lambda|\ell}K_{m,\lambda}$ and let 
\[ \res_{m,\ell}=\oplus_{\lambda|\ell}\res_{m,\lambda}:H^1(K_m,E_{p^\infty})\longrightarrow H^1(K_{m,\ell},E_{p^\infty})=\oplus_{\lambda|\ell}H^1(K_{m,\lambda},E_{p^\infty}) \] 
be the direct sum of the local restrictions $\res_{m,\lambda}$ and 
\begin{equation} \label{restriction-2}
\res_{m,\ell}=\oplus_{\lambda\mid\ell}\res_{m,\lambda}:\Sel_{p^\infty}(E/K_m)\longrightarrow 
E(K_{m,\ell})\otimes\Q_p/\Z_p
\end{equation} 
be the direct sum of the maps in \eqref{restriction}, where 
\[ E(K_{m,\ell})\otimes\Q_p/\Z_p:=\bigoplus_{\lambda\mid\ell}E(K_{m,\lambda})\otimes\Q_p/\Z_p \] 
and $\lambda$ rages over the primes of $K_m$ above $\ell$. In the rest of the paper, we adopt a similar notation for other, closely related groups as well (e.g., with obvious definitions, we write $\res_{m,\ell}$ for the restriction map on $E(K_{m,\ell})/p^mE(K_{m,\ell})$ taking values in $H^1(K_{m,\ell},E_{p^m})$).

\begin{lemma} \label{no-torsion-lemma}
The group $E_{p^n}(K_m)$ is trivial for all $m,n\geq0$.
\end{lemma}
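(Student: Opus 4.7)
The plan is to reduce the statement to $E_p(K_\infty) = 0$ and then argue via the surjectivity of $\rho_{E,p}$. Since $K_m \subseteq K_\infty$, it suffices to establish $E_{p^n}(K_\infty) = 0$ for every $n \geq 1$. If there were a nontrivial $P \in E_{p^n}(K_\infty)$ of exact order $p^k$ with $1 \leq k \leq n$, then $p^{k-1}P \in E_p(K_\infty)$ would be a nontrivial $K_\infty$-rational $p$-torsion point; hence the whole lemma reduces to the single fact $E_p(K_\infty) = 0$.

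To prove this, I would exploit Assumption \ref{ass}(2): the residual representation $\bar\rho := \rho_{E,p} \bmod p \colon G_\Q \to \Aut(E_p) \simeq \GL_2(\mathbb F_p)$ is surjective, and $E_p(K_\infty) = E_p^{G_{K_\infty}}$ vanishes precisely when $\bar\rho(G_{K_\infty})$ fixes no nonzero vector of $\mathbb F_p^2$. Any subgroup of $\GL_2(\mathbb F_p)$ fixing a nonzero vector is contained in a conjugate of the stabilizer $\bigl\{\smallmat{1}{*}{0}{*}\bigr\}$, which has order $p(p-1)$. To bound $|\bar\rho(G_{K_\infty})|$ from below, I would observe that $[\GL_2(\mathbb F_p) : \bar\rho(G_{K_\infty})]$ equals the order of the image of $\Gal(K_\infty/\Q) \simeq G_\infty \rtimes \Gal(K/\Q) \simeq \Z_p \rtimes \Z/2\Z$ in $\GL_2(\mathbb F_p)$. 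Every continuous homomorphism from $\Z_p$ into a finite group has image a finite cyclic $p$-group, and the Sylow $p$-subgroup of $\GL_2(\mathbb F_p)$ has order $p$, so this image has order at most $2p$. Consequently, $|\bar\rho(G_{K_\infty})| \geq |\GL_2(\mathbb F_p)|/(2p) = (p-1)^2(p+1)/2$, which strictly exceeds $p(p-1)$ for all $p \geq 3$ (equivalent to $p^2 - 2p - 1 > 0$); thus $\bar\rho(G_{K_\infty})$ cannot fix a nonzero vector and $E_p(K_\infty) = 0$.

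The argument is essentially group-theoretic, so there is no real obstacle; the only subtlety lies in correctly identifying the Sylow $p$-subgroup of $\GL_2(\mathbb F_p)$ and the structure of $\Gal(K_\infty/\Q)$. A shorter alternative would be to cite \cite[Lemma~2.1]{IP}, which furnishes $E_p(K_\infty) = 0$ from the weaker hypothesis that $p$ splits in $K$ (as ensured here by Assumption \ref{ass2}(1)); this is precisely the route alluded to in the paragraph immediately following Assumption \ref{ass2}.
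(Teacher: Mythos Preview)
Your proof is correct and aligns with the paper's own: the paper simply cites \cite[Lemma~2.1]{IP} (using that $p$ splits in $K$) and, as an alternative, points to the surjectivity-of-$\rho_{E,p}$ argument via \cite[Lemma~4.3]{Gross}, which is exactly the group-theoretic route you spell out in detail. One minor phrasing issue: there is no direct map $\Gal(K_\infty/\Q)\to\GL_2(\mathbb F_p)$, so it is cleaner to say that the index $[\GL_2(\mathbb F_p):\bar\rho(G_{K_\infty})]$ equals $[\Q(E_p)\cap K_\infty:\Q]$, a common finite quotient of both $\Gal(K_\infty/\Q)$ and $\GL_2(\mathbb F_p)$, hence of order dividing $2p$ --- but your bound and conclusion are unaffected.
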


\begin{proof} Since, by part (1) of Assumption \ref{ass2}, the prime $p$ splits in $K$, this is \cite[Lemma 2.1]{IP}. Alternatively, one can use the surjectivity of $\rho_{E,p}$ ensured by part (2) of Assumption \ref{ass} and proceed as in the proof of \cite[Lemma 4.3]{Gross}. \end{proof}

In the next lemma we record some useful facts about Selmer groups.
 
\begin{lemma} \label{inj-selmer-lemma} \label{isom-selmer-lemma}
\begin{enumerate}
\item For all $m\geq0$ there is an injection
\[ \rho_m:\Sel_{p^m}(E/K_m)\longmono \Sel_{p^{m+1}}(E/K_{m+1}) \]
induced by the restriction map and the inclusion $E_{p^m}\subset E_{p^{m+1}}$.
\item For all $m\geq0$ restriction induces an injection 
\[\res_{K_{m+1}/K_m}:\Sel_{p^\infty}(E/K_{m})\longmono\Sel_{p^\infty}(E/K_{m+1}).\]
\item For all $m,n\geq0$ there is an isomorphism 
\[ \Sel_{p^n} (E/K_m)\simeq\Sel_{p^\infty}(E/K_m{)}_{p^n}. \]
\end{enumerate}
\end{lemma}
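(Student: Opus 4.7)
All three statements reduce to exploiting the vanishing $E_{p^n}(K_m)=0$ for all $m,n\geq0$ (Lemma \ref{no-torsion-lemma}), combined with a diagram chase that propagates an injection of $H^1$ groups to an injection of Selmer subgroups.

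I would begin with part (3), since it is the cleanest. Consider the short exact sequence of $G_{K_m}$-modules
\[ 0\longrightarrow E_{p^n}\longrightarrow E_{p^\infty}\xrightarrow{\;p^n\;}E_{p^\infty}\longrightarrow0. \]
Taking Galois cohomology and using $E_{p^\infty}(K_m)=0$ (which follows from Lemma \ref{no-torsion-lemma}), I obtain a canonical isomorphism $H^1(K_m,E_{p^n})\simeq H^1(K_m,E_{p^\infty})_{p^n}$. Applying exactly the same exact sequence locally at every prime $\lambda$ of $K_m$ and noting that $E(K_{m,\lambda})\otimes\Q_p/\Z_p$ is the image of the local Kummer map for both $E_{p^n}$ and $E_{p^\infty}$, the natural commutative diagram shows that this isomorphism carries the Selmer condition for $E_{p^n}$ onto the $p^n$-torsion of the Selmer condition for $E_{p^\infty}$, giving the desired identification.

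For part (2), I would apply inflation–restriction to the extension $K_{m+1}/K_m$ with coefficients in $E_{p^\infty}$:
\[ 0\longrightarrow H^1\bigl(K_{m+1}/K_m,E_{p^\infty}(K_{m+1})\bigr)\longrightarrow H^1(K_m,E_{p^\infty})\longrightarrow H^1(K_{m+1},E_{p^\infty}). \]
By Lemma \ref{no-torsion-lemma} the leftmost term vanishes, so restriction is injective on global $H^1$. To check that it sends Selmer classes to Selmer classes, I write the standard commutative square whose vertical arrows are restrictions and whose horizontal arrows land in the local $H^1$'s: a class in the Selmer group of $K_m$ restricts locally to a Kummer image, which restricts further to a Kummer image at the primes of $K_{m+1}$.

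For part (1), I combine the inclusion $E_{p^m}\hookrightarrow E_{p^{m+1}}$ with the restriction map. The short exact sequence $0\to E_{p^m}\to E_{p^{m+1}}\to E_p\to 0$ (the quotient being $E_p$ via multiplication by $p^m$) together with $E_p(K_m)=0$ gives injectivity of $H^1(K_m,E_{p^m})\hookrightarrow H^1(K_m,E_{p^{m+1}})$; composing with the injection $H^1(K_m,E_{p^{m+1}})\hookrightarrow H^1(K_{m+1},E_{p^{m+1}})$ (obtained as in part (2), again using Lemma \ref{no-torsion-lemma}) produces the required map on $H^1$. Checking compatibility with the local Kummer conditions is the only point requiring care, and it follows from the functoriality of the Kummer map with respect to both the inclusion $E_{p^m}\subset E_{p^{m+1}}$ and the local restriction maps at each prime. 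Alternatively, one can deduce part (1) from parts (2) and (3) via the chain
\[ \Sel_{p^m}(E/K_m)\simeq\Sel_{p^\infty}(E/K_m)_{p^m}\longmono\Sel_{p^\infty}(E/K_{m+1})_{p^m}\subset\Sel_{p^\infty}(E/K_{m+1})_{p^{m+1}}\simeq\Sel_{p^{m+1}}(E/K_{m+1}), \]
after verifying that this composite agrees with the map induced by restriction and the inclusion $E_{p^m}\subset E_{p^{m+1}}$.

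\textbf{Main obstacle.} None of the steps is technically deep; the only point demanding attention is the bookkeeping of local conditions, ensuring that the global injections of $H^1$ carry the Selmer subgroups one into the other. This amounts to a diagram chase at each prime $\lambda$ of $K_m$ (respectively of $K_{m+1}$), using the commutativity of Kummer maps with both the inclusion $E_{p^m}\subset E_{p^{m+1}}$ and with restriction from $K_{m,\lambda}$ to completions at primes of $K_{m+1}$ above $\lambda$.
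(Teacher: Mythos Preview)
Your proposal is correct and follows exactly the approach the paper has in mind: the paper's proof consists of the single sentence ``All three statements follow easily from Lemma \ref{no-torsion-lemma} (see, e.g., \cite[\S 2.3, Lemma 1]{Ber1}),'' and your argument is precisely the standard unpacking of that sentence via the Kummer/inflation--restriction sequences together with the vanishing $E_{p^n}(K_m)=0$.
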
 

\begin{proof} All three statements follow easily from Lemma \ref{no-torsion-lemma} (see, e.g., \cite[\S 2.3, Lemma 1]{Ber1}). \end{proof}

For all $m,n\geq0$ there is a commutative square
\begin{equation} \label{selmer-square-eq}
\xymatrix@C=33pt@R=30pt{E(K_m)/p^nE(K_m)\ar@{^(->}[d]\ar@{^(->}[r]^-{\kappa_{m,n}}&\Sel_{p^n}(E/K_m)\ar@{^(->}[d]\\E(K_m)\otimes\Q_p/\Z_p\ar@{^(->}[r]^-{\kappa_m}&\Sel_{p^\infty}(E/K_m)}
\end{equation}
in which the right vertical injection is induced by the isomorphism in part (3) of Lemma \ref{inj-selmer-lemma}.

Define the discrete $\Lambda$-module 
\[ \Sel_{p^\infty}(E/K_\infty):=\varinjlim_m\Sel_{p^\infty}(E/K_m), \]
the direct limit being taken with respect to the restriction maps in cohomology, which are injective by part (2) of Lemma \ref{inj-selmer-lemma}.

\subsection{Restricted Selmer groups} \label{restricted}

Let $p\mathcal O_K=\p\bar\p$ with $\p\not=\bar\p$; by Assumption \ref{ass3}, both $\p$ and $\bar\p$ are totally ramified in $K_\infty/K$. Write $K_\p$ and $K_{\bar\p}$ for the completions of $K$ at $\p$ and $\bar\p$, respectively. For all $m\geq0$ let $K_{m,\p}$ and $K_{m,\bar\p}$ be the completions of $K_m$ at the unique prime above $\p$ and $\bar\p$, respectively. To simplify notation, in the following lines we let $L$, $L_m$ denote one of these pairs of completions (i.e., $K_\p$, $K_{m,\p}$ or $K_{\bar\p}$, $K_{m,\bar\p}$); then $\Gal(L_m/L)\simeq\Z/p^m\Z$. 

For all integers $m,n$ with $m\geq n\geq0$ let $\tr_{L_m/L_n}:E(L_m)\rightarrow E(L_n)$ denote the trace map. Following Kobayashi (\cite{Kob03}), we define 
\begin{equation} \label{E-plus/minus-eq}
\begin{split}
E^+(L_m)&:=\bigl\{P\in E(L_m)\mid\text{$\tr_{L_m/L_n}(P)\in E(L_{n-1})$ for all odd $n$ with $1\leq n<m$}\bigr\},\\
E^-(L_m)&:=\bigl\{P\in E(L_m)\mid\text{$\tr_{L_m/L_n}(P)\in E(L_{n-1})$ for all even $n$ with $0\leq n<m$}\bigr\}.
\end{split}
\end{equation}

\begin{definition} \label{plus/minus-dfn}
The \emph{plus/minus $p$-primary Selmer groups} of $E$ over $K_m$ are
\[ \Sel_{p^\infty}^\pm(E/K_m):=\ker\left(\Sel_{p^\infty}(E/K_m)\xrightarrow{\res_{m,p}}\frac{E(K_{m,\p})\otimes\Q_p/\Z_p}{E^\pm(K_{m,\p})\otimes\Q_p/\Z_p}\bigoplus\frac{E(K_{m,\bar\p})\otimes\Q_p/\Z_p}{E^\pm(K_{m,\bar\p})\otimes\Q_p/\Z_p}\right), \]
where $\res_{m,p}$ is the composition of the restrictions at $\p$ and $\bar\p$ with the quotient projections.
\end{definition}

In an analogous manner, replacing $\Sel_{p^\infty}(E/K_m)$ with $\Sel_{p^n}(E/K_m)$ and $\Q_p/\Z_p$ with $\Z/p^n\Z$, one can define $\Sel_{p^n}^\pm(E/K_m)$ for all $n\geq1$.

\begin{remark} \label{equivalence-rem}
In \cite{IP}, the groups $\Sel_{p^\infty}^\pm(E/K_m)$ are defined in terms of the formal group $\hat E$ of $E$. More precisely, if $\mathfrak m$ and $\bar{\mathfrak m}$ denote the maximal ideals of the rings of integers of $K_{m,\p}$ and $K_{m,\bar\p}$, respectively, Iovita and Pollack introduce subgroups $\hat E^\pm(\mathfrak m)\subset\hat E(\mathfrak m)$ and $\hat E^\pm(\bar{\mathfrak m})\subset\hat E(\bar{\mathfrak m})$ as in \eqref{E-plus/minus-eq}. Then they use these subgroups to define $\Sel_{p^\infty}^\pm(E/K_m)$ as in Definition \ref{plus/minus-dfn}, replacing $E(K_{m,\p})$ (respectively, $E^\pm(K_{m,\p})$) with $\hat E(\mathfrak m)$ (respectively, $\hat E^\pm(\mathfrak m)$) and $E(K_{m,\bar\p})$ (respectively, $E^\pm(K_{m,\bar\p})$) with $\hat E(\bar{\mathfrak m})$ (respectively, $\hat E^\pm(\bar{\mathfrak m})$). To see that their definition is equivalent to Definition \ref{plus/minus-dfn}, recall that, by \cite[Ch. VII, Proposition 2.2]{Sil}, the group $\hat E(\mathfrak m)$ is isomorphic to the kernel $E_1(K_{m,\p})$ of the reduction map $E(K_{m,\p})\rightarrow\bar E(\mathbb F_p)$. On the other hand, $|\bar E(\mathbb F_p)|=p+1$ because $a_p=0$, and $\hat E^\pm(\mathfrak m)\simeq E_1(K_{m,\p})\cap E^\pm(K_{m,\p})$, hence there are isomorphisms 
\[ \hat E(\mathfrak m)\otimes\Q_p/\Z_p\overset\simeq\longrightarrow E(K_{m,\p})\otimes\Q_p/\Z_p,\quad\hat E^\pm(\mathfrak m)\otimes\Q_p/\Z_p\overset\simeq\longrightarrow E^\pm(K_{m,\p})\otimes\Q_p/\Z_p. \]
Analogous considerations apply to $\hat E(\bar{\mathfrak m})$ and $\hat E^\pm(\bar{\mathfrak m})$, and the desired equivalence follows.
\end{remark}

Now form the two discrete $\Lambda$-modules 
\[ \Sel_{p^\infty}^\pm(E/K_\infty):=\varinjlim_m \Sel_{p^\infty}^\pm(E/K_m)\subset\Sel_{p^\infty}(E/K_\infty), \]
the direct limits being taken with respect to the restriction maps in cohomology. Note that, thanks to part (2) of Lemma \ref{inj-selmer-lemma}, these restrictions are injective. Furthermore, the fact that the groups $\Sel_{p^\infty}^\pm(E/K_m)$ do indeed form a direct system follows directly from Definition \ref{plus/minus-dfn} and the compatibility properties of the restriction maps involved.

\subsection{Control theorem} \label{control-subsec}

The next result provides a substitute for Mazur's original ``control theorem'' (\cite{Maz1}) and extends \cite[Theorem 9.3]{Kob03} to our anticyclotomic setting. 

\begin{theorem}[Iovita--Pollack] \label{CT-thm}
For every integer $m\geq0$ the restriction
\begin{equation} \label{CT}
\res_{K_\infty/K_m}:\Sel_{p^\infty}^\pm(E/K_m)^{\omega_m^\pm=0}\longrightarrow\Sel^\pm_{p^\infty}(E/K_\infty)^{\omega_m^\pm=0}
\end{equation}
is injective and has finite cokernel bounded independently of $m$.
\end{theorem}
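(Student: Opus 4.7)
The plan is to adapt Kobayashi's proof of the cyclotomic control theorem \cite[Theorem 9.3]{Kob03} to the anticyclotomic setting, as done by Iovita--Pollack in \cite{IP}. The argument naturally splits into a global step (a Selmer diagram with a snake lemma) and a local step (an analysis of the plus/minus local conditions at the two primes $\p,\bar\p$ above $p$).

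First I would consider the commutative diagram of $\Lambda$-modules with exact rows
\[
\xymatrix@C=8pt{
0 \ar[r] & \Sel_{p^\infty}^\pm(E/K_m) \ar[r] \ar[d]^{s_m^\pm} & H^1(K_m,E_{p^\infty}) \ar[r] \ar[d]^{h_m} & \displaystyle\bigoplus_v \mathcal{J}_v^\pm(K_m) \ar[d]^{g_m^\pm} \\
0 \ar[r] & \Sel_{p^\infty}^\pm(E/K_\infty)^{\Gamma_m} \ar[r] & H^1(K_\infty,E_{p^\infty})^{\Gamma_m} \ar[r] & \displaystyle\bigoplus_v \bigl(\mathcal{J}_v^\pm(K_\infty)\bigr)^{\Gamma_m}
}
\]
where $\mathcal{J}_v^\pm(F)$ is the usual cokernel of the local Kummer map for $v\nmid p$, and for $v\in\{\p,\bar\p\}$ the quotient of $H^1(F_v,E_{p^\infty})$ by the image of $E^\pm(F_v)\otimes\Q_p/\Z_p$. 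By inflation--restriction and Lemma \ref{no-torsion-lemma}, one has $\ker h_m = H^1(\Gamma_m,E_{p^\infty}(K_\infty)) = 0$, and similarly $H^2(\Gamma_m,E_{p^\infty}(K_\infty))=0$, so $h_m$ is actually an isomorphism. The snake lemma then shows that $s_m^\pm$ is injective and that $\mathrm{coker}\,s_m^\pm$ is a quotient of $\ker g_m^\pm$.

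Second I would bound $\ker g_m^\pm$ place by place. At primes $v\nmid p$, standard local cohomology computations show that the contribution is finite for each prime of bad reduction of $E$ or prime ramified in $K_\infty/K$, vanishes for all other $v$, and that the total kernel has order bounded independently of $m$. The heart of the matter is the local analysis at $\p$ and $\bar\p$, where the plus/minus conditions come into play. Invoking the formal-group description of $E^\pm$ recalled in Remark \ref{equivalence-rem}, one applies the results of Iovita--Pollack in \cite{IP} to identify, up to a bounded discrepancy depending only on $p$, the group $E^\pm(K_{m,v})\otimes\Q_p/\Z_p$ with the $\omega_m^\pm$-torsion of $E(K_{\infty,v})\otimes\Q_p/\Z_p$. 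This identification is the anticyclotomic incarnation of Kobayashi's key algebraic mechanism, and it is here that Assumption \ref{ass3} (total ramification of $\p,\bar\p$ in $K_\infty/K$) is crucially used.

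Finally I would pass to $\omega_m^\pm$-torsion in the exact sequence produced by the snake lemma. Since $\omega_m^\pm$ divides $\omega_m=\gamma_\infty^{p^m}-1$, the inclusion $\Sel_{p^\infty}^\pm(E/K_\infty)^{\omega_m^\pm=0}\subseteq\Sel_{p^\infty}^\pm(E/K_\infty)^{\Gamma_m}$ holds, and the map \eqref{CT} is just the restriction of $s_m^\pm$ to the $\omega_m^\pm$-kernel; injectivity is inherited directly from Lemma \ref{inj-selmer-lemma}. Boundedness of the cokernel then reduces to finiteness, uniformly in $m$, of $(\ker g_m^\pm)^{\omega_m^\pm=0}$, which by the local step above is controlled by the $\omega_m^\pm$-killed part of the local factors at $\p,\bar\p$. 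The main obstacle will be the local step: making precise the Iovita--Pollack/Kobayashi identification between the plus/minus formal-group filtrations and the $\Lambda$-module filtration by $\omega_m^\pm$-torsion, in a way compatible with the restriction maps along the tower $K_m\subseteq K_\infty$.
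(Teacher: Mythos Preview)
The paper does not reprove this statement: its entire proof is ``Keeping Remark~\ref{equivalence-rem} in mind, this is \cite[Theorem~6.8]{IP}.'' In other words, the authors simply invoke the Iovita--Pollack control theorem as a black box, after observing that their definition of $\Sel_{p^\infty}^\pm(E/K_m)$ agrees with the formal-group definition used in \cite{IP}.

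Your proposal, by contrast, is an outline of the actual argument Iovita--Pollack carry out in \cite{IP}: the standard Selmer-diagram/snake-lemma reduction (as in Kobayashi's cyclotomic proof \cite[Theorem~9.3]{Kob03}), with the vanishing of $E_{p^\infty}(K_\infty)$ (your Lemma~\ref{no-torsion-lemma}) handling the global step and the Kobayashi/Iovita--Pollack analysis of the plus/minus formal-group filtrations supplying the local step at $\p,\bar\p$. So you are not taking a different route from the paper; you are unpacking the reference the paper cites. The sketch is sound, and the point you flag as the ``main obstacle''---matching $E^\pm(K_{m,v})\otimes\Q_p/\Z_p$ with the $\omega_m^\pm$-torsion in the local cohomology up to bounded error---is exactly the content of the relevant sections of \cite{IP}, where Assumption~\ref{ass3} is indeed essential. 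One small caution: when you pass from the $\Gamma_m$-invariants to the $\omega_m^\pm$-kernel in the final step, be careful that restricting an exact sequence to a subfunctor does not automatically preserve cokernel bounds; in \cite{IP} this is handled by working with the $\omega_m^\pm$-filtration from the start rather than deducing it from the $\Gamma_m$-invariant version.
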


\begin{proof} Keeping Remark \ref{equivalence-rem} in mind, this is \cite[Theorem 6.8]{IP}. \end{proof}

Note that, by definition, $\Sel_{p^\infty}^\pm(E/K_m)^{\omega_m^\pm=0}$ is a $\Lambda_m^\pm$-module. Consider the Pontryagin dual
\[ \mathcal X_\infty^\pm:=\Hom_{\Z_p}^{\rm cont}\bigl(\Sel_{p^\infty}^\pm(E/K_\infty),\Q_p/\Z_p\bigr) \]
of $\Sel_{p^\infty}^\pm(E/K_\infty)$, equipped with its canonical structure of compact $\Lambda$-module. Moreover, for every integer $m\geq0$ write
\[ \mathcal X_m^\pm:=\Hom_{\Z_p}^{\rm cont}\bigl(\Sel_{p^\infty}^\pm(E/K_m),\Q_p/\Z_p\bigr) \]
for the Pontryagin dual of $\Sel_{p^\infty}^\pm(E/K_m)$, so that $\mathcal X_m^\pm$ has a natural $\Lambda_m$-module structure. By duality, the map in \eqref{CT} gives a surjection   
\begin{equation} \label{coro-CT}
\res_{K_\infty/K_m}^\vee:\mathcal X_\infty^\pm\big/\omega^\pm_m\mathcal X_\infty^\pm\longepi\mathcal X_m^\pm\big/\omega_m^\pm\mathcal X_m^\pm
\end{equation} 
whose finite kernel can be bounded independently of $m$. 

\begin{proposition}
The $\Lambda$-module $\mathcal X_\infty^\pm$ is finitely generated.
\end{proposition}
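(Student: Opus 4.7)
The plan is to apply the topological Nakayama lemma for compact modules over the complete local Noetherian ring $\Lambda\simeq\Z_p[\![X]\!]$, whose maximal ideal is $\mathfrak m=(p,X)=(p,\gamma_\infty-1)$. Since $\mathcal X_\infty^\pm$ is compact, it suffices to prove that $\mathcal X_\infty^\pm/\mathfrak m\mathcal X_\infty^\pm$ is finite; by Pontryagin duality, this is equivalent to showing that
\[ \Sel^\pm_{p^\infty}(E/K_\infty)[\mathfrak m]=\Sel^\pm_{p^\infty}(E/K_\infty)^{G_\infty}[p] \]
is a finite group.

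To control the $G_\infty$-invariants I would specialize the Iovita--Pollack control theorem (Theorem \ref{CT-thm}) to $m=0$. Since the products defining $\tilde\omega_0^\pm$ are empty we have $\tilde\omega_0^\pm=1$ and $\omega_0^\pm=X=\gamma_\infty-1$, so \eqref{CT} reads
\[ \Sel^\pm_{p^\infty}(E/K)\longmono\Sel^\pm_{p^\infty}(E/K_\infty)^{G_\infty} \]
with finite cokernel (bounded independently of $m$, which here is immaterial).

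Next I would observe that $\Sel_{p^\infty}^\pm(E/K)$ is a subgroup of the classical Selmer group $\Sel_{p^\infty}(E/K)$, which is a cofinitely generated $\Z_p$-module by the usual argument over a number field (Kummer sequence plus finiteness-type results; alternatively, via the short exact sequence with $E(K)\otimes\Q_p/\Z_p$ and $\Sha_{p^\infty}(E/K)$). Hence $\Sel_{p^\infty}^\pm(E/K)$ is cofinitely generated over $\Z_p$, and together with the finite cokernel above this forces $\Sel^\pm_{p^\infty}(E/K_\infty)^{G_\infty}$ to be cofinitely generated over $\Z_p$ as well. In particular its $p$-torsion is finite, which yields the finiteness of $\mathcal X_\infty^\pm/\mathfrak m\mathcal X_\infty^\pm$ and thus the proposition by Nakayama.

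I do not expect a real obstacle here: the argument is the standard Mazur-style reduction of finite generation to the residual quotient, and all the inputs are already in place in the paper. The only point that deserves a line of care is the identification $\omega_0^\pm=X$, so that the control theorem at level $m=0$ really describes the $G_\infty$-invariants of the restricted Selmer group; everything else is formal.
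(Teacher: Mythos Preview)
Your proof is correct and follows essentially the same route as the paper: both combine the control theorem with a topological Nakayama lemma to reduce finite generation of $\mathcal X_\infty^\pm$ to the cofiniteness (over $\Z_p$) of a restricted Selmer group at a finite layer. The only cosmetic difference is that you specialize to $m=0$ (using $\omega_0^\pm=X$ so that the maximal ideal $\mathfrak m=(p,X)$ appears directly), whereas the paper keeps $m$ general, notes that $\omega_m^\pm$ is topologically nilpotent and that $\mathcal X_m^\pm$ is finitely generated over $\Z_p$, and then appeals to the Nakayama-type result of Balister--Howson.
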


\begin{proof} Since $\omega_m^\pm$ is topologically nilpotent and $\mathcal X_m^\pm$ is finitely generated as a $\Z_p$-module, the claim follows from \eqref{coro-CT} and \cite[Corollary, p. 226]{BH}. \end{proof}

There are canonical commutative squares
\[ \xymatrix@C=32pt@R=30pt{\Sel_{p^\infty}^\pm(E/K_m)^{\omega_m^\pm=0}\ar@{^(->}[rr]^-{\res_{K_{\infty}/K_m} }\ar@{^(->}[d]^-{\res_{K_{m+1}/K_m}}&& 
\Sel_{p^\infty}(E/K_\infty)^{\omega_m^\pm=0}\ar@{^(->}[d]^-{i_m}\\
\Sel_{p^\infty}^\pm(E/K_{m+1})^{\omega_{m+1}^\pm=0}\ar@{^(->}[rr]^-{\res_{K_{\infty}/K_{m+1}} }&&
\Sel_{p^\infty}(E/K_\infty)^{\omega_{m+1}^\pm=0}} \] 
where $i_m$ is the natural inclusion (here observe that $\omega_m^\pm\,|\,\omega_{m+1}^\pm$) and 
\[ \xymatrix@C=35pt@R=30pt{\mathcal X_\infty^\pm\big/\omega_{m+1}^\pm\mathcal X_\infty^\pm\ar@{->>}[rr]^-{\res^\vee_{K_\infty/K_{m+1}}}\ar@{->>}[d]^-{i_m^\vee}&&
\mathcal X_{m+1}^\pm\big/\omega_{m+1}^\pm\mathcal X_{m+1}^\pm\ar[d]^-{\res^\vee_{K_{m+1}/K_m}}\ar@{->>}[d]^-{\res^\vee_{K_{m+1}/K_m}}\\
\mathcal X_\infty^\pm\big/\omega_{m}^\pm\mathcal X_\infty^\pm\ar@{->>}[rr]^-{\res^\vee_{K_\infty/K_{m}}}&&
\mathcal X_{m}^\pm\big/\omega_{m}^\pm\mathcal X_{m}^\pm} \] 
where, as before, the symbol $\phi^\vee$ denotes the Pontryagin dual of a given map $\phi$. 

\section{Iwasawa modules of plus/minus Heegner points} \label{sec4}

In order to relax, as in Assumption \ref{ass2}, the Heegner hypothesis imposed in \cite{Ber1} and \cite{Cip}, we need to consider Heegner points on Shimura curves attached to division quaternion algebras over $\Q$. These points will then be mapped to the elliptic curve $E$ via a suitable modular parametrization.

\subsection{Shimura curves and modularity} \label{shimura-subsec}

Let $B$ denote the (indefinite) quaternion algebra over $\Q$ of discriminant $D$ and fix an isomorphism of $\R$-algebras
\[ i_\infty:B\otimes_\Q\R\overset\simeq\longrightarrow\M_2(\R). \]
Let $R(M)$ be an Eichler order of $B$ of level $M$ and write $\Gamma_0^D(M)$ for the group of norm $1$ elements of $R(M)$. If $D>1$ and $\HH:=\{z\in\C\mid\mathrm{Im}(z)>0\}$ then the Shimura curve of level $M$ and discriminant $D$ is the (compact) Riemann surface
\[ X_0^D(M):=\Gamma_0^D(M)\backslash\mathcal H. \]
Here the action of $\Gamma_0^D(M)$ on $\mathcal H$ by M\"obius (i.e., fractional linear) transformations is induced by $i_\infty$. If $D=1$ (i.e., $M=N$) then we can take $B=\M_2(\Q)$, so that $\Gamma_0^1(M)=\Gamma_0(N)$ and $\Gamma_0^1(M)\backslash\mathcal H=Y_0(N)$, the open modular curve of level $N$; in this case, we define $X_0^1(M):=X_0(N)$, the (Baily--Borel) compactification of $Y_0(N)$ obtained by adding its cusps. By a result of Shimura, the projective algebraic curve corresponding to $X^D_0(M)$ is defined over $\Q$.

Finally, thanks to the modularity of $E$, Faltings's isogeny theorem and (when $D>1$) the Jacquet--Langlands correspondence between classical and quaternionic modular forms, there exists a surjective morphism
\begin{equation} \label{parametrization} 
\pi_E:X_0^D(M)\longrightarrow E 
\end{equation}
defined over $\Q$, which we fix once and for all (see, e.g., \cite[\S 4.3]{LRV} and \cite[\S 3.4.4]{Zhang1} for details).

\subsection{Heegner points and trace relations} \label{heegner-trace-subsec}

Let us first consider the case where $D=1$. Choose an ideal $\mathcal N\subset\cO_K$ such that $\mathcal O_K/\mathcal N\simeq \Z/N\Z$, which exists thanks to the Heegner hypothesis satisfied by $K$. For each integer $c\geq 1$ prime to $N$ and the discriminant of $K$, let $\cO_{c}=\Z+c\cO_K$ be the order of $K$ of conductor $c$. The isogeny $\C/\cO_{c}\rightarrow \C/(\cO_{c}\cap\mathcal N)^{-1}$ defines a Heegner point $x_c\in Y_0(N)\subset X_0(N)$ that, by complex multiplication, is rational over the ring class field $H_{c}$ of $K$ of conductor $c$. In the rest of the paper, $c$ will vary in the powers of the prime $p$. 

In the general quaternionic case, a convenient way to introduce Heegner points $x_c\in X_0^D(M)(H_c)$ is to exploit the theory of (oriented) optimal embeddings of quadratic orders into Eichler orders. We shall not give precise definitions here, but rather refer to \cite[Section 2]{BD96} for details. From now on we fix a compatible system of Heegner points
\[ {\bigl\{x_{p^m}\in X_0^D(M)(H_{p^m})\bigr\}}_{m\geq0} \]
as described in \cite[\S 2.4]{BD96}.

Recall the morphism $\pi_E$ introduced in \eqref{parametrization} and for every integer $m\geq0$ set
\[ y_{p^m}:=\pi_E(x_{p^m})\in E(H_{p^m}). \]
In order to define Heegner points over $K_\infty$, we take Galois traces. Namely, for all $m\geq1$ set
\begin{equation} \label{d(m)-eq}
d(m):=\min\bigl\{d\in\mathbb N\mid K_m\subset H_{p^d}\bigr\}. 
\end{equation}
For example, if $p\nmid h_K$ then $d(m)=m+1$. In light of this, for all $m\geq0$ define
\[ z_m:=\tr_{H_{p^{d(m)}}/K_m}\bigl(y_{p^{d(m)}}\bigr)\in E(K_m). \]
By the formulas in \cite[\S 3.1, Proposition 1]{PR} and \cite[\S 2.5]{BD96}, the following relations hold:
\begin{equation} \label{heegner-eq}
\tr_{K_{m}/K_{m-1}}(z_{m})=\begin{cases}-z_{m-2}&\text{if $m\geq2$},\\[2mm]\displaystyle{\frac{p-1}{2}z_0}&\text{if $m=1$}. \end{cases} 
\end{equation}

\subsection{Plus/minus Heegner points and trace relations} \label{plus-minus-subsec}

Starting from the Heegner points that we considered in \S \ref{heegner-trace-subsec}, we define \emph{plus/minus Heegner points} $z_m^\pm$ as follows. Set $z_0^\pm:=z_0$ and for every $m\geq1$ define   
\[ z^+_m:=\begin{cases}z_m&\text{if $m$ is even},\\[2mm]z_{m-1}&\text{if $m$ is odd},\end{cases}\qquad z^-_m:=\begin{cases}z_{m-1}&\text{if $m$ is even},\\[2mm]z_{m}&\text{if $m$ is odd}.\end{cases} \] 
As a consequence of formulas \eqref{heegner-eq}, the points $z_m^\pm\in E(K_m)$ satisfy the following relations: 
\begin{itemize}
\item[(a)] $\tr_{K_m/K_{m-1}}(z_m^+)=-z_{m-1}^+$\quad for every even $m\geq2$;
\item[(b)] $\tr_{K_m/K_{m-1}}(z_m^+)=pz_{m-1}^+$\quad for every odd $m\geq1$;
\item[(c)] $\tr_{K_m/K_{m-1}}(z_m^-)=pz_{m-1}^-$\quad for every even $m\geq2$;
\item[(d)] $\tr_{K_m/K_{m-1}}(z_m^-)=-z_{m-1}^-$\quad for every odd $m\geq3$;
\item[(e)] $\tr_{K_1/K_0}(z_1^-)=\frac{p-1}{2}z_0^-=\frac{p-1}{2}z_0$. 
\end{itemize}
Finally, with $\kappa_{m,m}$ as in \eqref{kappa-m-n-eq} and $\kappa_m$ as in \eqref{kappa-m-eq}, for all $m\geq0$ set 
\[ \alpha_m^\pm:=\kappa_{m,m}\big([z_m^\pm]\big)\in\Sel_{p^m}(E/K_m),\qquad\beta_m^\pm:=\kappa_m\big(z_m^\pm\otimes1\big)\in\Sel_{p^\infty}(E/K_m). \]
For every $m\geq0$ let 
\[ \tilde\rho_m:\Sel_{p^{m+1}}(E/K_{m+1})\longrightarrow\Sel_{p^m}(E/K_m) \]
be the composition of $\cores_{K_{m+1}/K_m}$ with the multiplication-by-$p$ map. Moreover, write
\[ t_m:E(K_{m+1})/p^{m+1}E(K_{m+1})\longrightarrow E(K_m)/p^mE(K_m) \]
for the natural map induced by $\tr_{K_{m+1}/K_m}$. The resulting square
\begin{equation} \label{squaresquare-eq}
\xymatrix@C=60pt@R=35pt{E(K_{m+1})/p^{m+1}E(K_{m+1})\ar[d]^-{t_m}\ar@{^(->}[r]^-{\kappa_{m+1,m+1}}&\Sel_{p^{m+1}}(E/K_{m+1})\ar@{^(->}[d]^-{\tilde\rho_m}\\E(K_m)/p^mE(K_m)\ar@{^(->}[r]^-{\kappa_{m,m}}&\Sel_{p^m}(E/K_m)}
\end{equation}
is commutative.

The following result collects the properties enjoyed by the classes $\alpha_m^\pm$ under corestriction.
\begin{proposition} \label{cores-prop}
The following formulas hold:
\begin{itemize}
\item[(a)] $\tilde\rho_{m-1}(\alpha_m^+)=-\alpha_{m-1}^+$\quad for every even $m\geq2$;
\item[(b)] $\tilde\rho_{m-1}(\alpha_m^+)=p\alpha_{m-1}^+$\quad for every odd $m\geq1$;
\item[(c)] $\tilde\rho_{m-1}(\alpha_m^-)=p\alpha_{m-1}^-$\quad for every even $m\geq2$;
\item[(d)] $\tilde\rho_{m-1}(\alpha_m^-)=-\alpha_{m-1}^-$\quad for every odd $m\geq3$;
\item[(e)] $\tilde\rho_{m-1}(\alpha_1^-)=\frac{p-1}{2}\alpha_0^-=\frac{p-1}{2}\alpha_0$. 
\end{itemize}
\end{proposition}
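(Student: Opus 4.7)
The strategy is to translate the trace identities for $z_m^\pm\in E(K_m)$ into the desired corestriction identities for $\alpha_m^\pm$ by applying the commutative square \eqref{squaresquare-eq} at level $m-1$. Since $\alpha_m^\pm=\kappa_{m,m}\bigl([z_m^\pm]\bigr)$ and $t_{m-1}$ is induced by $\tr_{K_m/K_{m-1}}$, the commutativity of that square gives immediately
\[
\tilde\rho_{m-1}(\alpha_m^\pm)=\kappa_{m-1,m-1}\Bigl(\bigl[\tr_{K_m/K_{m-1}}(z_m^\pm)\bigr]\Bigr).
\]
Hence it suffices, in each of the five cases, to evaluate $\tr_{K_m/K_{m-1}}(z_m^\pm)$ inside $E(K_{m-1})$ and recognize the result as a $\Z_p$-multiple of $z_{m-1}^\pm$; the claim then follows by applying $\kappa_{m-1,m-1}$.

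Each case is a short bookkeeping check against the definition of $z_m^\pm$ and the trace formula \eqref{heegner-eq}. For (a), with $m\geq 2$ even, one has $z_m^+=z_m$ and, by \eqref{heegner-eq}, $\tr_{K_m/K_{m-1}}(z_m)=-z_{m-2}$; since $m-1$ is odd, $z_{m-1}^+=z_{m-2}$, so the right-hand side equals $-z_{m-1}^+$. For (b), with $m\geq 1$ odd, $z_m^+=z_{m-1}$ already lies in $E(K_{m-1})$, so its trace is simply $[K_m:K_{m-1}]\cdot z_{m-1}=pz_{m-1}$; and since $m-1$ is even (including the base case $m-1=0$), $z_{m-1}^+=z_{m-1}$. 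Cases (c) and (d) follow the identical pattern with the two parities swapped, and (e) is the $m=1$ instance of the sign $-$, where the second clause of \eqref{heegner-eq} produces the factor $(p-1)/2$ and $z_0^-=z_0$ by definition.

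I do not expect any genuine obstacle: the plus/minus Heegner points were constructed precisely so that the two-step recursion \eqref{heegner-eq} between the layers $K_m$ and $K_{m-2}$ collapses into the clean nearest-neighbour recursions (a)--(e). The only non-tautological input is the compatibility encoded in \eqref{squaresquare-eq}, which absorbs the extra factor of $p$ built into the definition of $\tilde\rho_{m-1}$; once that square is in hand, the argument is a direct unwinding of definitions.
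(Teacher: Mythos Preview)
Your proof is correct and follows exactly the approach of the paper: the paper's own proof is the single sentence ``Straightforward from the corresponding formulas for the points $z_m^\pm$ listed above and square \eqref{squaresquare-eq},'' and you have simply unpacked this by applying the commutative square at level $m-1$ and reading off the trace identities (a)--(e) for $z_m^\pm$ that are already stated immediately before the proposition. In fact, your case-by-case verification re-derives those five trace identities rather than quoting them, but this is harmless.
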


Of course, analogous formulas, with $\cores_{K_m/K_{m-1}}$ in place of $\tilde\rho_{m-1}$, hold for $\beta_m^\pm$.

\begin{proof} Straightforward from the corresponding formulas for the points $z_m^\pm$ listed above and square \eqref{squaresquare-eq}. \end{proof}

Now we can prove

\begin{proposition} \label{class-selmer-prop}
\begin{enumerate}
\item The class $\alpha_m^\pm$ belongs to $\Sel^\pm_{p^m}(E/K_m)^{\omega_m^\pm=0}$ for every $m\geq0$.
\item The class $\beta_m^\pm$ belongs to $\Sel^\pm_{p^\infty}(E/K_m)^{\omega_m^\pm=0}$ for every $m\geq0$.
\end{enumerate}
\end{proposition}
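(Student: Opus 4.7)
The plan is to split the statement into two independent assertions---membership in the plus/minus Selmer group and annihilation by $\omega_m^\pm$---and reduce both to computations in $E(K_m)$ via the Galois-equivariance of the Kummer maps $\kappa_{m,m}$ and $\kappa_m$. The entire argument then boils down to iterating the trace formulas (a)--(e) for the points $z_m^\pm$.

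For the plus/minus condition, Assumption \ref{ass3} ensures that each of $\p$ and $\bar\p$ has a unique prime in every layer $K_m$, so under the natural inclusion $E(K_m) \hookrightarrow E(K_{m,\p})$ (and analogously for $\bar\p$) the global trace $\tr_{K_m/K_n}$ coincides with the local trace. Combined with the commutative square \eqref{selmer-square-eq}, this reduces membership of $\alpha_m^\pm$ in $\Sel_{p^m}^\pm(E/K_m)$ and of $\beta_m^\pm$ in $\Sel_{p^\infty}^\pm(E/K_m)$ to the assertion that $z_m^\pm$ lies in both $E^\pm(K_{m,\p})$ and $E^\pm(K_{m,\bar\p})$. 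In the plus case, for every odd $n$ with $1 \leq n < m$, iterated application of (a) and (b) gives $\tr_{K_m/K_n}(z_m^+) = c \cdot z_n^+$ for an explicit integer $c$, and by definition $z_n^+ = z_{n-1} \in E(K_{n-1})$; the minus case is treated analogously, using (c), (d), (e).

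For the annihilation statement, factor $\omega_m^\pm(X) = X \cdot \tilde\omega_m^\pm(X)$ and identify $X$ with $\gamma_m - 1$; by $\Lambda_m$-equivariance of the Kummer maps, it suffices to show $\omega_m^\pm(\gamma_m)\, z_m^\pm = 0$ in $E(K_m)$. The key observation is that for every $n \leq m$ and every $P \in E(K_n)$ one has $\Phi_n(\gamma_m)\, P = \sum_{i=0}^{p-1} \gamma_m^{ip^{n-1}}\, P = \tr_{K_n/K_{n-1}}(P)$, since $\gamma_m^{p^{n-1}}$ restricts to a generator of $\Gal(K_n/K_{n-1})$. Applying the factors of $\tilde\omega_m^\pm(\gamma_m)$ one after another and invoking (a)--(e) at each step, the point descends exactly two levels per factor, so that $\tilde\omega_m^\pm(\gamma_m)\, z_m^\pm = c \cdot z_0$ for some $c \in \Z$ (assembled from the signs, powers of $p$, and, in the minus case, the factor $(p-1)/2$ coming from (e)). Since $z_0 \in E(K)$ is fixed by $G_m$, the remaining prefactor $\gamma_m - 1$ kills it, proving both (1) and (2) at the same time.

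The real content is essentially bookkeeping: formulas (a)--(e) are tailored precisely so that each factor $\Phi_n(\gamma_m)$ descends the iterated trace by two levels, eventually terminating at $z_0$; the only mild subtlety is tracking parities of $m$ and the asymmetric coefficient $(p-1)/2$ appearing in (e) when computing the constant $c$ at the bottom of the minus tower.
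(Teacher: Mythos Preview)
Your proof is correct and follows essentially the same route as the paper: both verify the plus/minus Selmer condition by showing that the trace relations (a)--(e) force $z_m^\pm$ to lie in $E^\pm(K_{m,\lambda})$, and both deduce $\omega_m^\pm$-annihilation from the same circle of ideas. The only difference is cosmetic: where the paper dispatches the annihilation step by citing a ``global version of the local computations in the proof of \cite[Proposition 4.11]{IP}'', you unpack that reference explicitly via the identity $\Phi_n(\gamma_m)\,P=\tr_{K_n/K_{n-1}}(P)$ for $P\in E(K_n)$ and iterate the trace formulas down to $z_0$.
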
 

\begin{proof} Fix an integer $m\geq0$, let $\lambda$ denote either $\p$ or $\bar\p$, set $L:=K_{m,\lambda}$ and put $\alpha^\pm_\lambda:=\res_{m,\lambda}(\alpha^\pm_m)\in E(L)/p^mE(L)$. The previous formulas show that $[z_m^\pm]\in E^\pm(L)/p^mE^\pm(L)$, hence $\alpha_m^\pm\in\Sel^\pm_{p^m}(E/K_m)$. On the other hand, the fact that $\omega_m^\pm\alpha_m^\pm=0$ follows from a global version of the local computations in the proof of \cite[Proposition 4.11]{IP} (which is possible because the points $z_m^\pm$, as well as the trace relations they satisfy, are global). This proves (1), and (2) can be shown in the same way.  \end{proof}

\subsection{Direct limits of plus/minus Heegner modules} \label{direct-subsec}
 
In light of Proposition \ref{class-selmer-prop}, for all $m\geq0$ let $\E_m^\pm:=R_m^\pm\alpha^\pm_m$ denote the $R_m^\pm$-submodule (or, equivalently, the $R_m$-submodule) of $\Sel_{p^m}^\pm(E/K_m)^{\omega_m^\pm=0}$ generated by $\alpha_m^\pm$. The inclusion $\Sel_{p^m}^\pm(E/K_m)^{\omega_m^\pm=0}\subset\Sel_{p^m}^\pm(E/K_m)$ allows us to regard $\E_m^\pm$ as a submodule of the whole restricted Selmer group $\Sel_{p^m}^\pm(E/K_m)$. Note that, by the commutativity of \eqref{selmer-square-eq}, the injection $\Sel_{p^m}^\pm(E/K_m)\hookrightarrow\Sel_{p^\infty}^\pm(E/K_m)$ given by part (3) of Lemma \ref{inj-selmer-lemma} sends $\E_m^\pm$ to the $\Lambda_m^\pm$-submodule $\Lambda_m^\pm\beta^\pm_m$ generated by $\beta_m^\pm$.

For the proof of the next result, recall the injection
\[ \rho_m:\Sel_{p^m}(E/K_m)\longmono \Sel_{p^{m+1}}(E/K_{m+1}) \]
of part (1) of Lemma \ref{inj-selmer-lemma}. If $F'/F$ is a Galois extension of number fields and $M$ is a continuous $G_F$-module then $\res_{F'/F}\circ\cores_{F'/F}=\tr_{F'/F}$, where 
\[ \tr_{F'/F}:=\sum_{\sigma\in\Gal(F'/F)}\sigma:H^1(F',M)\rightarrow H^1(F',M) \] 
is the Galois trace map (see, e.g., \cite[Corollary 1.5.7]{NSW}). We immediately obtain
\begin{equation} \label{res-cores-norm} 
\rho_m\circ\tilde\rho_m=p\tr_{K_{m+1}/K_m}
\end{equation} 
for all $m\geq0$. For all $m\geq0$ and $n\geq1$ consider the canonical map 
\[ \iota_m^n:E(K_m)/p^nE(K_m)\longrightarrow E(K_m)/p^{n+1}E(K_m),\qquad [Q]\longmapsto[pQ]  \] 
and, finally, denote by 
\[ j_m:E(K_m)/p^mE(K_m)\longrightarrow E(K_{m+1})/p^mE(K_{m+1}) \]
the obvious map.

\begin{proposition} \label{inj-prop}
The map $\rho_m$ induces injections
\[ \rho_m^\pm:\E_m^\pm\longmono\E_{m+1}^\pm \]
for all $m\geq0$.
\end{proposition}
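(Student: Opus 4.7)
The plan is to reduce the containment $\rho_m(\E_m^\pm)\subseteq\E_{m+1}^\pm$ to an explicit identity for $\rho_m(\alpha_m^\pm)$, after which the injectivity of $\rho_m^\pm$ will be automatic from the injectivity of $\rho_m$ itself (part (1) of Lemma \ref{inj-selmer-lemma}).

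First I would record the cocycle identity
\[
\rho_m\bigl(\kappa_{m,m}([Q])\bigr)=\kappa_{m+1,m+1}\bigl([pQ]\bigr)\qquad\text{for every $Q\in E(K_m)$.}
\]
If $R\in E(\bar\Q)$ satisfies $p^mR=Q$, then $p^{m+1}R=pQ$, and the Kummer cocycle $\sigma\mapsto\sigma R-R$, restricted to $G_{K_{m+1}}$ and viewed with values in $E_{p^{m+1}}$ via the inclusion $E_{p^m}\subset E_{p^{m+1}}$, represents both sides. Since $\rho_m$ is induced by restriction in Galois cohomology, it is $\Lambda$-equivariant with respect to the $\Lambda$-module structures on source and target coming from $G_\infty$ acting through its quotients $G_m$ and $G_{m+1}$. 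Because $\E_m^\pm=R_m^\pm\alpha_m^\pm=\Lambda\cdot\alpha_m^\pm$ and $R_{m+1}^\pm\alpha_{m+1}^\pm$ is a $\Lambda$-submodule of $\Sel_{p^{m+1}}(E/K_{m+1})$, it therefore suffices to check that $\rho_m(\alpha_m^\pm)\in R_{m+1}^\pm\alpha_{m+1}^\pm$.

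The verification splits according to the parity of $m$ and the sign. The case $m=0$ is vacuous, since $E(K)/p^0E(K)=0$ forces $\alpha_0^\pm=0$ and hence $\E_0^\pm=0$. For $m\geq1$, when $z_m^\pm$ and $z_{m+1}^\pm$ come from the same Heegner point $z_m$ (this occurs for $m$ even with sign $+$, and for $m$ odd with sign $-$), the cocycle identity above gives
\[
\rho_m(\alpha_m^\pm)=\kappa_{m+1,m+1}\bigl([pz_m]\bigr)=p\,\alpha_{m+1}^\pm\in R_{m+1}^\pm\alpha_{m+1}^\pm.
\]
In the opposite situation, where $z_m^\pm=z_{m-1}$ while $z_{m+1}^\pm=z_{m+1}$ (this happens for $m$ odd with sign $+$, and for $m$ even with $m\geq2$ and sign $-$), I would invoke the Heegner trace relation $\tr_{K_{m+1}/K_m}(z_{m+1})=-z_{m-1}$ from \eqref{heegner-eq} to write, in $E(K_{m+1})$, the identity $pz_{m-1}=-p\,\tr_{K_{m+1}/K_m}(z_{m+1})$. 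Combined with the cocycle identity and the Galois-equivariance of the Kummer map (which turns $\tr_{K_{m+1}/K_m}$ applied to the point into the $\Lambda$-action of $\Phi_{m+1}(1+X)$ on the Selmer class), this yields
\[
\rho_m(\alpha_m^\pm)=-p\,\tr_{K_{m+1}/K_m}\cdot\alpha_{m+1}^\pm,
\]
which again visibly lies in $R_{m+1}^\pm\alpha_{m+1}^\pm$.

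The main obstacle is the bookkeeping in this case analysis: one has to keep track of how $z_m^\pm$ and $z_{m+1}^\pm$ are related for each combination of parity and sign (as dictated by the definitions in \S\ref{plus-minus-subsec}) and then transport the trace relation $\tr_{K_{m+1}/K_m}(z_{m+1})=-z_{m-1}$ from the level of points to the level of Selmer classes via Galois-equivariance. Once this is done, $\Lambda$-equivariance of $\rho_m$ propagates the containment $\rho_m(\alpha_m^\pm)\in\E_{m+1}^\pm$ to the full submodule $\E_m^\pm$, and injectivity of $\rho_m^\pm$ is inherited from that of $\rho_m$.
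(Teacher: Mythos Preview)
Your proof is correct and follows essentially the same strategy as the paper: reduce to showing $\rho_m(\alpha_m^\pm)\in\E_{m+1}^\pm$ by a parity/sign case split, using the identity $\rho_m(\kappa_{m,m}([Q]))=\kappa_{m+1,m+1}([pQ])$ in the ``same point'' case and the Heegner trace relation $\tr_{K_{m+1}/K_m}(z_{m+1})=-z_{m-1}$ in the ``different point'' case. The paper packages the latter via Proposition~\ref{cores-prop} and the identity \eqref{res-cores-norm} ($\rho_m\circ\tilde\rho_m=p\,\tr_{K_{m+1}/K_m}$), arriving at the same formula $\rho_m(\alpha_m^\pm)=-p\,\tr_{K_{m+1}/K_m}(\alpha_{m+1}^\pm)$ that you obtain directly.
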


\begin{proof} Fix an $m\geq0$. We treat only the case of sign $+$, the other being analogous. By part (2) of Lemma \ref{inj-selmer-lemma}, $\rho_m$ is injective at the level of Selmer groups, so it suffices to show that $\rho_m(\E_m^+)\subset\E_{m+1}^+$. Suppose that $m$ is even. Then $z_m^+=z_{m+1}^+$, and the commutativity of the square
\[ \xymatrix@C=60pt@R=35pt{E(K_m)/p^mE(K_m)\ar@{^(->}[d]^-{\iota^m_{m+1}\circ j_m}\ar@{^(->}[r]^-{\kappa_{m,m}}&\Sel_{p^m}(E/K_m)\ar@{^(->}[d]^-{\rho_m}\\E(K_{m+1})/p^{m+1}E(K_{m+1})\ar@{^(->}[r]^-{\kappa_{m+1,m+1}}&\Sel_{p^{m+1}}(E/K_{m+1})} \]
implies that $\rho_m(\alpha_m^+)=p\alpha_{m+1}^+$. By definition of the Galois action on our cohomology groups, it follows that $\rho_m(\E_m^+)\subset\E_{m+1}^+$. Now suppose that $m$ is odd. By part (a) of Proposition \ref{cores-prop}, $\tilde\rho_m(\alpha_{m+1}^+)=-\alpha_m^+$. Applying \eqref{res-cores-norm} and using the fact that, by Proposition \ref{class-selmer-prop}, the action of $R_{m+1}$ on $\alpha_{m+1}^+$ factors through $R_{m+1}^+$, we get $\rho_m(\alpha_m^+)=-p\tr_{K_{m+1}/K_m}(\alpha_{m+1}^+)\in R_{m+1}\alpha_{m+1}^+=R_{m+1}^+\alpha_{m+1}^+$. As before, we conclude that $\rho_m(\E_m^+)\subset\E_{m+1}^+$. \end{proof}

Thanks to Proposition \ref{inj-prop}, we can form the discrete $\Lambda$-module 
\[ \mathcal E_\infty^\pm:=\dirlim_m\mathcal E_m^\pm, \]
where the direct limits are taken with respect to the maps $\rho_m^\pm$ of Proposition \ref{inj-prop}. Moreover, the commutativity of the squares
\[ \xymatrix@C=45pt@R=35pt{\Sel_{p^m}^\pm(E/K_m)\ar@{^(->}[d]^-{\rho^\pm_m}\ar@{^(->}[r]&\Sel_{p^\infty}^\pm(E/K_m)\ar@{^(->}[d]^-{\res_{K_{m+1}/K_m}}\\\Sel^\pm_{p^{m+1}}(E/K_{m+1})\ar@{^(->}[r]&\Sel^\pm_{p^\infty}(E/K_{m+1}),} \]
in which the horizontal injections are induced by the isomorphisms in part (3) of Lemma \ref{inj-selmer-lemma}, shows that $\E_\infty^\pm$ can be naturally viewed as a $\Lambda$-submodule of $\Sel_{p^\infty}^\pm(E/K_\infty)$.

Denote by 
\[ \mathcal H_\infty^\pm:=(\E_\infty^\pm)^\vee=\varprojlim_m(\E_m^\pm)^\vee \]
the Pontryagin dual of $\E_\infty^\pm$. We shall see below (Proposition \ref{H-rank-prop}) that both $\mathcal H_\infty^+$ and $\mathcal H_\infty^-$ are finitely generated, torsion-free $\Lambda$-modules of rank $1$. 

\subsection{Nontriviality of Heegner points and the $\Lambda$-rank of $\mathcal H_\infty^\pm$} \label{rank-1-subsec}

We want to apply the results of Cornut (\cite{Co}) and of Cornut--Vatsal (\cite{CV}) on the nontriviality of Heegner points on $E$ as one ascends $K_\infty$ to show that $\mathcal H_\infty^\pm$ have rank $1$ over $\Lambda$. Similar ideas can also be found in \cite[Proposition 2.1]{Cip} and \cite[Theorem 2.5.1]{CW}.

We begin with some lemmas.

\begin{lemma} \label{non-div-lemma}
For $m\gg0$ the point $z_m^\pm$ is not $p^m$-divisible in $E(K_m)$.
\end{lemma}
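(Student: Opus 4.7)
The plan is to argue by contradiction: assume that $z_m^\pm$ is $p^m$-divisible in $E(K_m)$ for arbitrarily large $m$, and derive a contradiction by combining the non-triviality theorem of Cornut \cite{Co} and Cornut--Vatsal \cite{CV} with the plus/minus trace relations (a)--(e) of \S\ref{plus-minus-subsec}.

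The external input is the theorem of \cite{Co,CV}: there is a fixed $m_0 \geq 0$ for which $z_{m_0}$, and hence $z_{m_0}^\pm$, is of infinite order in $E(K_{m_0})$. Since $E(K_{m_0})$ is a finitely generated abelian group, the image of $z_{m_0}^\pm$ in the torsion-free quotient $E(K_{m_0})/E(K_{m_0})_\mathrm{tors}$ is a nonzero element of a free $\Z$-module, and is therefore $p^N$-divisible only for $N$ below a fixed constant $C$.

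The remainder of the proof is a descent from level $m$ to level $m_0$ in the tower. Writing $z_m^\pm = p^m Q_m$ with $Q_m \in E(K_m)$ and applying $\tr_{K_m/K_{m-1}}$, the trace relations (a)--(d) give one of two outcomes depending on the parity of $m$: in the favourable case (e.g.\ sign $+$ with $m$ even, via relation (a)) one has $z_{m-1}^\pm = \mp\,p^m\,\tr_{K_m/K_{m-1}}(Q_m)$, which is still $p^m$-divisible in $E(K_{m-1})$; in the unfavourable case (e.g.\ sign $+$ with $m$ odd, via relation (b)) one has $p\,z_{m-1}^\pm = \pm\,p^m\,\tr_{K_m/K_{m-1}}(Q_m)$, and the absence of $p$-torsion in $E(K_{m-1})$ from Lemma \ref{no-torsion-lemma} permits cancellation of $p$ to yield $p^{m-1}$-divisibility of $z_{m-1}^\pm$. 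Iterating this descent one level at a time down to $m_0$ (the bottom step for sign $-$ involves relation (e), which only contributes a $p$-adic unit factor $\tfrac{p-1}{2}$), one finds that each two-level drop costs at most one power of $p$, so $z_{m_0}^\pm$ ends up $p^{k(m)}$-divisible in $E(K_{m_0})$ with $k(m) \geq \tfrac{m-m_0}{2} - O(1)$. Choosing $m$ large enough that $k(m) > C$ contradicts the previous paragraph.

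The main obstacle is concentrated in the appeal to \cite{Co,CV}, which is what converts a purely formal compatibility of Heegner points into concrete non-vanishing information at a single finite level; once that is granted, the descent itself is elementary bookkeeping with the parity-dependent trace formulas.
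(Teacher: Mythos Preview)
Your argument is correct and follows essentially the same strategy as the paper: invoke Cornut/Cornut--Vatsal to fix a level $m_0$ where $z_{m_0}^\pm$ is non-torsion, then trace down from level $m$ to $m_0$ using relations (a)--(e) and Lemma~\ref{no-torsion-lemma} to show that $p^m$-divisibility of $z_m^\pm$ would force unbounded $p$-divisibility of $z_{m_0}^\pm$ in the finitely generated group $E(K_{m_0})$. The only cosmetic difference is that the paper jumps from $m_0+2n$ to $m_0$ in a single step via the composite formula $\tr_{K_{m_0+2n}/K_{m_0}}(z_{m_0+2n}^+)=(-1)^np^nz_{m_0}^+$ (and thus only treats $m$ of the correct parity explicitly), whereas you descend one layer at a time and thereby handle both parities uniformly.
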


\begin{proof} Results of Cornut (\cite{Co}) and of Cornut--Vatsal (\cite{CV}) guarantee that the points $z_m^\pm\in E(K_m)$ are non-torsion for $m\gg0$. We prove the lemma for sign $+$, the case of sign $-$ being completely analogous. To fix ideas, define
\[ m_0:=\min\big\{m\in\N\mid\text{$m$ is even and $z^+_m$ is non-torsion}\big\}. \]
We claim that $z_m^\pm$ is not $p^m$-divisible in $E(K_m)$ for even $m\gg m_0$. First of all, the formulas in \S \ref{plus-minus-subsec} imply that if $n\in\N$ then
\[ \tr_{K_{m_0+2n}/K_{m_0}}\big(z_{m_0+2n}^+\big)=(-1)^np^nz_{m_0}^+. \] 
If $z_{m_0+2n}^+=p^{m_0+2n}x$ with $x\in E(K_{m_0+2n})$ and we set $y:=(-1)^n\tr_{K_{m_0+2n}/K_{m_0}}(x)\in E(K_{m_0})$ then $p^nz_{m_0}^+=p^{m_0+2n}y$, that is, $p^n(z_{m_0}^+-p^{m_0+n}y)=0$. On the other hand, by Lemma \ref{no-torsion-lemma}, the torsion group $E_{p^n}(K_{m_0})$ is trivial, so we conclude that $z_{m_0}^+$ is $p^{m_0+n}$-divisible in $E(K_{m_0})$. But the Mordell--Weil group $E(K_{m_0})$ is finitely generated and $z_{m_0}^+$ is non-torsion, hence $z_{m_0}^+$ is $p^t$-divisible in $E(K_{m_0})$ only for finitely many $t\in\N$. The lemma follows. \end{proof}

\begin{lemma} \label{E-m-nontrivial-lemma}
The $R_m^\pm$-module $\E_m^\pm$ is non-trivial for $m\gg0$.
\end{lemma}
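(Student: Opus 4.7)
The plan is straightforward: since $\mathcal E_m^\pm = R_m^\pm \alpha_m^\pm$ is a cyclic $R_m^\pm$-module, it is non-trivial precisely when its generator $\alpha_m^\pm$ is non-zero, so the task reduces to showing $\alpha_m^\pm \neq 0$ in $\Sel_{p^m}(E/K_m)$ for $m \gg 0$.

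First I would recall that $\alpha_m^\pm = \kappa_{m,m}\big([z_m^\pm]\big)$, where
\[
\kappa_{m,m} : E(K_m)/p^m E(K_m) \longmono \Sel_{p^m}(E/K_m)
\]
is the Kummer map from \eqref{kappa-m-n-eq}, which is injective by Lemma \ref{no-torsion-lemma} (no $p$-torsion). Hence $\alpha_m^\pm = 0$ if and only if the class $[z_m^\pm]$ vanishes in $E(K_m)/p^m E(K_m)$, i.e., if and only if $z_m^\pm$ is $p^m$-divisible in $E(K_m)$.

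The conclusion is then immediate from the preceding Lemma \ref{non-div-lemma}: for $m \gg 0$ the plus/minus Heegner point $z_m^\pm$ fails to be $p^m$-divisible in $E(K_m)$, so $[z_m^\pm] \neq 0$, whence $\alpha_m^\pm \neq 0$ and therefore $\mathcal E_m^\pm$ is non-trivial. The only subtlety to flag is that $\mathcal E_m^\pm$ is non-trivial \emph{as an $R_m^\pm$-module}, not just as an abelian group, but this is automatic since the $R_m^\pm$-action on the non-zero element $\alpha_m^\pm$ includes multiplication by $1$. There is no real obstacle here: the entire content of the statement is packaged in the non-divisibility result of Lemma \ref{non-div-lemma}, which in turn rests on the non-triviality theorems of Cornut and Cornut--Vatsal.
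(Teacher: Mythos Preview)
Your proof is correct and follows essentially the same approach as the paper: use Lemma~\ref{non-div-lemma} to get $[z_m^\pm]\neq 0$ in $E(K_m)/p^mE(K_m)$ for $m\gg 0$, then apply the injectivity of $\kappa_{m,m}$ to conclude $\alpha_m^\pm\neq 0$. One minor remark: the injectivity of the Kummer map $\kappa_{m,m}$ holds automatically from the long exact sequence in Galois cohomology and does not require Lemma~\ref{no-torsion-lemma}.
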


\begin{proof} By Lemma \ref{non-div-lemma}, the class $[z_m^\pm]$ of $z_m^\pm$ in $E(K_m)/p^mE(K_m)$ is non-zero for $m\gg0$. Finally, the injectivity of the maps $\kappa_{m,m}$ implies that $\alpha_m^\pm=\kappa_{m,m}\big([z_m^\pm]\big)$ is non-zero in $\Sel^\pm_{p^m}(E/K_m)$. In particular, $\E_m^\pm$ is non-trivial for $m\gg0$.  \end{proof}

As an immediate consequence, we get

\begin{lemma} \label{E-nontrivial-lemma}
The $\Lambda$-modules $\E_\infty^\pm$ are non-trivial.
\end{lemma}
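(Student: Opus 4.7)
The plan is to deduce this directly from Lemma \ref{E-m-nontrivial-lemma}, using the general fact that a direct limit built from injective transition maps equals the union of its terms and hence is non-zero as soon as any of its terms is non-zero.

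More concretely, I would argue as follows. By construction $\mathcal{E}_\infty^\pm = \varinjlim_m \mathcal{E}_m^\pm$ with transition maps $\rho_m^\pm \colon \mathcal{E}_m^\pm \hookrightarrow \mathcal{E}_{m+1}^\pm$, which are injective by Proposition \ref{inj-prop}. Since the direct limit functor is exact on the category of $\Z_p$-modules (or abelian groups), the canonical map $\mathcal{E}_m^\pm \to \mathcal{E}_\infty^\pm$ is injective for every $m \geq 0$. Therefore, if $\mathcal{E}_{m_0}^\pm \neq 0$ for some $m_0$, then $\mathcal{E}_\infty^\pm \neq 0$.

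Lemma \ref{E-m-nontrivial-lemma} supplies precisely such an $m_0$: for $m \gg 0$ the class $\alpha_m^\pm$ is non-zero in $\Sel_{p^m}^\pm(E/K_m)$, so $\mathcal{E}_m^\pm = R_m^\pm \alpha_m^\pm$ is non-trivial. Its image in $\mathcal{E}_\infty^\pm$ under the canonical injection is therefore a non-zero $\Lambda$-submodule, which proves the claim. There is no substantial obstacle here; all the real work has already been done in Lemma \ref{non-div-lemma} (via Cornut and Cornut--Vatsal) and in Proposition \ref{inj-prop} (injectivity of the transition maps $\rho_m^\pm$).
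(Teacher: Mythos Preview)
Your proposal is correct and follows essentially the same argument as the paper: both invoke Proposition \ref{inj-prop} to conclude that each $\mathcal{E}_m^\pm$ injects into $\mathcal{E}_\infty^\pm$, and then appeal to Lemma \ref{E-m-nontrivial-lemma} to supply a non-trivial term. The paper's proof is slightly terser, but the content is identical.
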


\begin{proof} By Proposition \ref{inj-prop}, the maps $\rho_m^\pm$ with respect to which the direct limits $\E_\infty^\pm$ are taken are injective, hence $\E^\pm_m$ injects into $\E_\infty^\pm$ for all $m\geq0$. The lemma follows from Lemma \ref{E-m-nontrivial-lemma}. \end{proof}

Now we can prove

\begin{proposition} \label{H-rank-prop}
The $\Lambda$-modules $\mathcal H_\infty^\pm$ are finitely generated, torsion-free and of rank $1$.
\end{proposition}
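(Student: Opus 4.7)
The plan is to break the statement into three pieces and attack each in turn.

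Finite generation is immediate: the $\Lambda$-linear inclusion $\mathcal E_\infty^\pm\hookrightarrow\Sel_{p^\infty}^\pm(E/K_\infty)$ dualises to a surjection $\mathcal X_\infty^\pm\twoheadrightarrow\mathcal H_\infty^\pm$, and the finite generation of $\mathcal X_\infty^\pm$ over $\Lambda$ established in \S\ref{control-subsec} passes to $\mathcal H_\infty^\pm$.

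For the upper bound $\mathrm{rank}_\Lambda\mathcal H_\infty^\pm\le 1$, I would apply the topological Nakayama lemma to the compact $\Lambda$-module $\mathcal H_\infty^\pm$: it suffices to show $\dim_{\mathbb F_p}\mathcal H_\infty^\pm/\mathfrak m\mathcal H_\infty^\pm\le 1$, where $\mathfrak m=(p,X)\subset\Lambda$ denotes the maximal ideal. By Pontryagin duality this translates into the bound $\dim_{\mathbb F_p}\mathcal E_\infty^\pm[\mathfrak m]\le 1$. Since the transition maps $\rho_m^\pm$ are $\Lambda$-equivariant injections, $\mathcal E_\infty^\pm[\mathfrak m]=\bigcup_m\mathcal E_m^\pm[\mathfrak m]$, so it is enough to bound uniformly the socle of the cyclic $R_m^\pm$-module $\mathcal E_m^\pm\simeq R_m^\pm/\mathrm{ann}_{R_m^\pm}(\alpha_m^\pm)$; for this one uses that $R_m^\pm=\Lambda/(\omega_m^\pm,p^m)$ is Artinian Gorenstein (indeed a complete intersection, because $(p^m,\omega_m^\pm)$ is a regular sequence in $\Lambda$) combined with the precise trace relations for the $\alpha_m^\pm$ recorded in Proposition \ref{cores-prop}.

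Once $\mathcal H_\infty^\pm$ is known to be $\Lambda$-cyclic, writing $\mathcal H_\infty^\pm\simeq\Lambda/J$ for some ideal $J\subseteq\Lambda$ reduces both the rank-one and the torsion-freeness statements to $J=0$. Non-degeneracy of the Pontryagin pairing identifies $J$ with $\bigcap_m\mathrm{ann}_\Lambda(\alpha_m^\pm)$, so the task is to show that no nonzero $f\in\Lambda$ can kill all the classes $\alpha_m^\pm$ simultaneously. Here I would invoke the nontriviality theorems of Cornut \cite{Co} and Cornut--Vatsal \cite{CV}: for every finite order character $\chi$ of $G_\infty$, the $\chi$-isotypic projection of $z_m^\pm$ in $E(K_m)\otimes\overline{\mathbb Q}_p$ is nonzero for all $m$ large enough, and transferring this non-vanishing through the Kummer maps blocks any $f\ne 0$ from annihilating $\alpha_m^\pm$ for every $m$ at once.

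The main obstacle I anticipate is the uniform socle bound in the second step. Because $R_m^\pm$ has embedding dimension two --- one checks that $p\notin(p^m,\omega_m^\pm)+\mathfrak m^2$ for $m\ge 2$ by reducing modulo $X$ --- a generic cyclic $R_m^\pm$-module can have a socle of dimension up to two. To cut it down to one dimension one must exploit the specific Kummer-theoretic origin of $\alpha_m^\pm$ from the Heegner class $[z_m^\pm]$ together with the trace compatibilities of Proposition \ref{cores-prop}, which are stronger than just $\omega_m^\pm\cdot\alpha_m^\pm=0$.
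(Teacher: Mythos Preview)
Your plan has a genuine gap in step two, and the paper's argument shows why you are making life harder than necessary.

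The paper does not attempt to prove that $\mathcal H_\infty^\pm$ is cyclic. Instead it observes that, since $\mathcal E_m^\pm$ is generated over $R_m$ by the single class $\alpha_m^\pm$, there is a surjection $R_m\twoheadrightarrow\mathcal E_m^\pm$; dualising and using the self-duality $R_m^\vee\simeq R_m$ of the group ring $\Z/p^m\Z[G_m]$ gives $(\mathcal E_m^\pm)^\vee\hookrightarrow R_m$. Passing to the inverse limit yields a $\Lambda$-linear injection $\mathcal H_\infty^\pm\hookrightarrow\Lambda$. Because $\Lambda$ is a noetherian domain, this single embedding delivers finite generation, torsion-freeness and $\mathrm{rank}_\Lambda\mathcal H_\infty^\pm\le 1$ all at once. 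The strict inequality is then excluded simply because a torsion-free rank-$0$ module is zero, while $\mathcal E_\infty^\pm\neq 0$ by Lemma~\ref{E-nontrivial-lemma} (which only uses that $z_m^\pm$ is eventually not $p^m$-divisible).

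By contrast, your step two aims for $\dim_{\mathbb F_p}\mathcal E_m^\pm[\mathfrak m]\le 1$, i.e.\ cyclicity of $\mathcal H_\infty^\pm$. As you yourself note, the local Gorenstein ring $R_m^\pm$ has embedding dimension two, so a cyclic $R_m^\pm$-module can have two-dimensional socle; the trace relations in Proposition~\ref{cores-prop} do not obviously cut this down, and you give no mechanism for doing so. Your step three then rests entirely on this unproved cyclicity: writing $\mathcal H_\infty^\pm\simeq\Lambda/J$ and showing $J=0$ presupposes a single generator. Moreover, to force $J=0$ you invoke non-vanishing of the $\chi$-component of $z_m^\pm$ for \emph{every} finite-order $\chi$, which is stronger than what the paper extracts from Cornut and Cornut--Vatsal (mere non-torsion of $z_m^\pm$ for $m\gg0$). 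The moral is that dualising the cyclicity of $\mathcal E_m^\pm$ \emph{before} passing to the limit --- rather than trying to control the socle after --- is what makes the argument go through cleanly.
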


\begin{proof}  For every $m\geq0$ the natural surjections $R_m\twoheadrightarrow\E^\pm_m$ induce, by duality, injections $(\E_m^\pm)^\vee\hookrightarrow R^\vee_m$. Since $R_m=\Z/p^m\Z[G_m]$, there are isomorphisms $R_m^\vee\simeq R_m$, hence taking inverse limits gives injections $\mathcal H_\infty^\pm=\varprojlim_m(\E_m^\pm)^\vee\hookrightarrow\varprojlim_m R_m=\Lambda$. Since $\Lambda$ is a noetherian domain, this shows that $\mathcal H_\infty^\pm$ are finitely generated, torsion-free $\Lambda$-modules of rank equal to $0$ or to $1$. Now the structure theorem for finitely generated $\Lambda$-modules implies that if $\mathrm{rank}_\Lambda(\mathcal H_\infty^\pm)=0$ then $\mathcal H_\infty^\pm=0$, hence $\E_\infty^\pm=(\mathcal H_\infty^\pm)^\vee=0$. This contradicts Lemma \ref{E-nontrivial-lemma}, so we conclude that $\mathrm{rank}_\Lambda(\mathcal H_\infty^\pm)=1$. \end{proof}

\section{$\Lambda$-adic Euler systems} \label{sec5}

The goal of this section is to prove Theorem \ref{main}; we restate it below.

\begin{theorem} \label{thm4.1}
Each of the two $\Lambda$-modules $\mathcal X_\infty^\pm$ has rank $1$. 
\end{theorem}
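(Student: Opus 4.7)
The plan is to leverage the canonical surjection
\[ \pi^\pm \colon \mathcal{X}_\infty^\pm \longepi \mathcal{H}_\infty^\pm \]
obtained by Pontryagin duality from the inclusion $\mathcal{E}_\infty^\pm \subset \Sel_{p^\infty}^\pm(E/K_\infty)$ constructed in \S\ref{direct-subsec}. Combined with Proposition \ref{H-rank-prop}, which yields $\mathrm{rank}_\Lambda(\mathcal{H}_\infty^\pm)=1$, this immediately gives the lower bound $\mathrm{rank}_\Lambda(\mathcal{X}_\infty^\pm) \geq 1$. The substantive content of the theorem is the matching upper bound, which reduces to showing that $\ker(\pi^\pm)$ is a $\Lambda$-torsion module.

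To establish that $\ker(\pi^\pm)$ is torsion, I would adapt the $\Lambda$-adic Kolyvagin argument of Bertolini \cite{Ber1} to the supersingular setting via the plus/minus formalism. The first step is to enlarge the Heegner system of \S\ref{heegner-trace-subsec} by introducing Heegner points $x_{p^m n} \in X_0^D(M)(H_{p^m n})$ indexed by squarefree products $n$ of \emph{Kolyvagin primes} (inert primes $\ell \nmid pND$ such that $p \mid \ell+1$ and $p \mid a_\ell$). Pushing these via the modular parametrization $\pi_E$ and applying Kolyvagin's derivative operators $D_n$ yields, for each pair $(m,n)$, derived cohomology classes in $H^1(K_m, E_{p^m})$. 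The second step is to compute their local behavior: unramified outside $pMn$; satisfying the plus/minus local condition at $p$ (verified by a global version of the local computations in \cite{IP} performed along the plus/minus Heegner sequences of \S\ref{plus-minus-subsec}); and with controlled singular/finite components at each $\ell \mid n$, related by an explicit reciprocity law. The third step is the Chebotarev argument: for any hypothetical non-torsion element $s \in \ker(\pi^\pm)$, repeated application of Chebotarev density in the tower $K_m(E_{p^m})/\Q$ produces Kolyvagin primes $\ell$ whose Frobenius simultaneously detects $s$ and separates it from a fixed non-torsion element of $\mathcal{H}_\infty^\pm$; global duality (Poitou--Tate) combined with the reciprocity law then forces a contradiction.

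The main obstacle is precisely the difficulty flagged in the introduction: because the original Heegner points $z_m$ fail to be trace-compatible in the supersingular case, Bertolini's $\Lambda$-adic machinery cannot be invoked directly. The plus/minus subsequences $z_m^\pm$ do satisfy the signed and twisted trace relations recorded in \S\ref{plus-minus-subsec}, but one must verify that Kolyvagin's derivative construction, when applied along these subsequences, still produces classes satisfying (i) the plus/minus local condition at $p$, and (ii) a $\Lambda_m^\pm$-equivariant reciprocity law in the anticyclotomic direction. Verifying that the derived Kolyvagin system descends to and remains nontrivial on the quotients $\Lambda_m^\pm$ — and that the even/odd layering of the plus/minus formalism does not obstruct the Chebotarev-style bookkeeping — is the technical heart of the remainder of the paper; once these local verifications are in place, the standard $\Lambda$-adic Euler system argument closes the proof.
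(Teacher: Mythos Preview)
Your overall strategy---reduce to showing $\ker(\pi^\pm)$ is $\Lambda$-torsion via a $\Lambda$-adic Kolyvagin argument combining Chebotarev, local reciprocity, and global duality---is correct and matches the paper's. The implementation, however, differs from the paper's in several respects worth noting.

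First, the paper does \emph{not} use Heegner points indexed by squarefree products $n$ of Kolyvagin primes. Only depth-one Kolyvagin classes $d_m(\ell)$ (a single auxiliary prime at each level) appear; the full derivative-operator Euler system you sketch is closer to the Howard/Mazur--Rubin framework the paper explicitly defers to future work. Second, the paper's argument is organized around the eigenspace decomposition under complex conjugation $\tau$, which you do not mention: one fixes a non-torsion $x\in(\mathcal{X}_\infty^\pm)^\epsilon$ and an arbitrary $y\in\ker(\pi^\pm)^{-\epsilon}$, builds via local Tate duality at a compatible family $\ell^\pm_\infty$ an intermediate $\Lambda$-module $V^\pm(\ell^\pm_\infty)$ surjecting onto $\Lambda x\oplus\Lambda y$ (Proposition~\ref{prop4.5}), and then uses the single-prime Kolyvagin classes with global duality to show $V^\pm(\ell^\pm_\infty)$ is cyclic (Proposition~\ref{prop-global-duality}), forcing $y$ to be torsion.

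Third, your identification of the main obstacle is slightly misplaced. The Kolyvagin classes $d_m(\ell)$ lie in $H^1(K_m,E)_{p^m}$ and are \emph{trivial} at every place not dividing $\ell$ (Proposition~\ref{prop-d-ell}), so there is no plus/minus local condition at $p$ to verify for them. The plus/minus machinery enters instead through (a) the Heegner classes $\alpha_m^\pm$ themselves lying in $\Sel_{p^m}^\pm(E/K_m)^{\omega_m^\pm=0}$, (b) the systematic replacement of $R_m$ by $R_m^\pm$ in all module-theoretic bookkeeping (Lemma~\ref{isom-lemma}, Proposition~\ref{prop:compatibilities}), and (c) the even/odd reindexing $\mu(m)$ in \S\ref{duality-sec2} aligning the Kolyvagin classes with the plus/minus subsequences. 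The genuine technical crux is ensuring compatibility of these $R_m^\pm$-structures as $m$ varies so that inverse limits yield honest $\Lambda$-modules---this is where Proposition~\ref{prop:compatibilities} and the careful choice of isomorphisms $i_m^{\pm,(\epsilon)}$ do the work.
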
 

In other words, we show that 
\[ \mathrm{corank}_{\Lambda}\bigl(\Sel^+_{p^\infty}(E/K_\infty)\bigr)=\mathrm{corank}_{\Lambda}\bigl(\Sel^-_{p^\infty}(E/K_\infty)\bigr)=1. \]
The injection of $\Lambda$-modules $\E_\infty^\pm\hookrightarrow\Sel^\pm_{p^\infty}(E/K_\infty)$ gives, by duality, a surjection of $\Lambda$-modules 
\[ \pi^\pm:\mathcal X_\infty^\pm\longepi\mathcal H_\infty^\pm. \]
Proving Theorem \ref{thm4.1} is thus equivalent to showing that the $\Lambda$-module $\ker(\pi^\pm)$ is torsion. Equivalently, if $\tau$ denotes the generator of $\Gal(K/\Q)$ then we need to show that all elements of $\ker(\pi^\pm)$ lying in an eigenspace for $\tau$ are $\Lambda$-torsion. 

Choose an element $x\in \mathcal X_\infty^\pm$ such that $\tau x=\epsilon x$ for some $\epsilon\in\{\pm\}$ and $x$ is not $\Lambda$-torsion: this can be done because the $\Lambda$-module $\mathcal H_\infty^\pm$ has rank $1$ by Proposition \ref{H-rank-prop} and the map $\pi^\pm$ is surjective. As is explained in \cite[p. 170]{Ber1}, to prove Theorem \ref{thm4.1} it is enough to show that every $y\in\ker(\pi^\pm)^{-\epsilon}$ is $\Lambda$-torsion. To do this, in the next subsections we will adapt the $\Lambda$-adic Euler system argument of \cite{Ber1}. 


\subsection{Kolyvagin primes} 

Denote by 
\[ \rho_m:G_\Q\longrightarrow \Aut(E_{p^m})\simeq\GL_2(\Z/p^m\Z) \] 
the Galois representation on $E_{p^m}$ and let $K(E_{p^m})$ be the composite of $K$ and the field cut out by $\rho_m$; in other words, $K(E_{p^m})$ is the composite of $K$ and $\bar\Q^{\ker(\rho_m)}$. In particular, $K(E_{p^m})$ is Galois over $\Q$. 

\begin{definition} \label{kolyvagin-dfn}
A prime number $\ell$ is a \emph{Kolyvagin prime} for $p^m$ if $\ell\nmid Np$ and $\Frob_\ell=[\tau]$ in $\Gal(K(E_{p^m})/\Q)$. 
\end{definition} 

In particular, Kolyvagin primes are inert in $K$ and hence split completely in $K_m$ for all $m\geq1$. Let $\ell$ be a Kolyvagin prime for $p^m$. Define the $\tilde R_m$-modules 
\[ \bigl(E(K_{m,\ell},E)/p^mE(K_{m,\ell})\bigr)^{(\pm)}:=R_m\bigl(E(K_{m,\ell},E)/p^mE(K_{m,\ell})\bigr)^{\pm} \]
and
\[ \bigl(H^1(K_{m,\ell},E)_{p^m}\bigr)^{(\pm)}:=R_m\bigl(H^1(K_{m,\ell},E)_{p^m}\bigr)^\pm, \]
where the superscript $\pm$ on the right denotes the submodule on which complex conjugation acts as $\pm$. 

\begin{lemma} \label{local-splitting-lemma}
Let $\ell$ be a Kolyvagin prime for $p^m$.
\begin{enumerate}
\item The $\tilde R_m$-module $\bigl(E(K_{m,\ell})/p^mE(K_{m,\ell})\bigr)^{(\pm)}$ is isomorphic to $R_m^{(\pm)}$, hence there is a decomposition $E(K_{m,\ell})/p^mE(K_{m,\ell})\simeq R_m^{(+)}\oplus R_m^{(-)}$.
\item The $\tilde R_m$-module $\bigl(H^1(K_{m,\ell},E)_{p^m}\bigr)^{(\pm)}$ is isomorphic to $R_m^{(\pm)}$, hence there is a decomposition $H^1(K_{m,\ell},E)_{p^m}\simeq R_m^{(+)}\oplus R_m^{(-)}$.
\end{enumerate}
\end{lemma}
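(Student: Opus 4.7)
The strategy is a local-to-global analysis: identify $E(K_{m,\lambda})/p^m$ and $H^1(K_{m,\lambda},E)_{p^m}$ at each prime $\lambda \mid \ell$ as $\Z/p^m$-modules with decomposition-group action, and then assemble the global $\tilde R_m$-module structure via induction. The Kolyvagin condition $\Frob_\ell = [\tau]$ in $\Gal(K(E_{p^m})/\Q)$ forces $\ell$ to be inert in $K/\Q$ and (as recorded immediately after Definition \ref{kolyvagin-dfn}) to split completely in $K_m/K$, producing $p^m$ primes $\lambda$ permuted simply transitively by $G_m$, with each $K_{m,\lambda}/\Q_\ell$ the unramified quadratic extension. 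Since $\Frob_\ell$ acts on $E_{p^m}$ as complex conjugation, $\Frob_{K_{m,\lambda}} = \Frob_\ell^2 = 1$ on $E_{p^m}$; good reduction at $\ell\neq p$, inertia-triviality, and Kummer theory then give $E(K_{m,\lambda})/p^m \simeq H^1_{\rm unr}(K_{m,\lambda},E_{p^m}) \simeq E_{p^m}$. An entirely parallel computation (or local Tate duality), using that $\ell\equiv -1\pmod{p^m}$ makes $\Frob_{K_{m,\lambda}}$ act trivially on $\mu_{p^m}$ as well, yields $H^1(K_{m,\lambda},E)_{p^m} \simeq \Hom(E_{p^m},\mu_{p^m})$.

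Next I would identify the global module with an induced representation. Since $|G_m|$ is odd, every involution in $D_m = G_m\rtimes\langle\tau\rangle$ has the form $\gamma\tau$, and the $p^m$ conjugates of $\langle\tau\rangle$ are exactly the decomposition groups of the $p^m$ primes above $\ell$; choose $\lambda_0$ whose decomposition group is $\langle\tau\rangle$ itself. Then
\[
E(K_{m,\ell})/p^m \simeq \mathrm{Ind}^{D_m}_{\langle\tau\rangle}\bigl(E_{p^m}\bigr)
\]
as $\tilde R_m$-modules, with $\tau$ acting on $E_{p^m} = E(K_{m,\lambda_0})/p^m$ as complex conjugation $c$. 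Because $p$ is odd, $c^2=1$, and $\det(c)=-1$, the idempotents $(1\pm c)/2$ give an integral decomposition $E_{p^m} = E_{p^m}^+ \oplus E_{p^m}^-$ into rank-$1$ free $\Z/p^m$-modules, and induction commutes with direct sums, so everything reduces to computing $\mathrm{Ind}^{D_m}_{\langle\tau\rangle}(E_{p^m}^\epsilon)$ for $\epsilon=\pm$.

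This last identification is a short explicit calculation. Using $G_m$ as a set of coset representatives and a generator $v$ of $E_{p^m}^\epsilon$, the elements $\{g\otimes v\}_{g\in G_m}$ form a $\Z/p^m$-basis on which $G_m$ acts by left translation, yielding an identification with $R_m$ as an $R_m$-module; the relation $\tau g = g^{-1}\tau$ in $D_m$ then produces
\[
\tau(g\otimes v) = g^{-1}\otimes \tau v = \epsilon\,(g^{-1}\otimes v),
\]
which is exactly the defining $\tau$-action $\gamma^\tau = \epsilon\gamma^{-1}$ of $R_m^{(\epsilon)}$. Hence $\mathrm{Ind}^{D_m}_{\langle\tau\rangle}(E_{p^m}^\epsilon) \simeq R_m^{(\epsilon)}$, giving the decomposition $E(K_{m,\ell})/p^m \simeq R_m^{(+)}\oplus R_m^{(-)}$ in part (1); the two $\tilde R_m$-submodules $\bigl(E(K_{m,\ell})/p^m E(K_{m,\ell})\bigr)^{(\pm)}$ are then identified with the two corresponding summands. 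Part (2) follows by the same induction argument applied to $H^1(K_{m,\lambda},E)_{p^m} \simeq \Hom(E_{p^m},\mu_{p^m})$; the $\mu_{p^m}$-twist swaps which summand of $E_{p^m}$ carries the $+$ versus the $-$ eigenspace for $c$, but since the induction is taken over both signs the final decomposition is unchanged.

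The main obstacle is the bookkeeping of the $\tau$-twist through the induction step, which is where the non-trivial action $\gamma^\tau = \epsilon\gamma^{-1}$ of $R_m^{(\epsilon)}$ emerges naturally from the combination of $\tau g = g^{-1}\tau$ in $D_m$ and the scalar action of $\tau$ on $E_{p^m}^\epsilon$; one must also check that the eigenspace decomposition of $E_{p^m}$ is integral over $\Z/p^m$, which is automatic because $p$ is odd.
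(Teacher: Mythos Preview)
Your argument is correct and supplies precisely the details that the paper omits: the paper's own proof is simply a citation to \cite[\S1.2, Lemma 4 and Corollary 6]{Ber1}, and your local-computation-plus-induction approach is the standard one (and is in essence what Bertolini does). The key inputs you identify---complete splitting of $\ell$ in $K_m/K$, the identification $E(K_{m,\lambda})/p^m\simeq E_{p^m}$ via $\Frob_{K_{m,\lambda}}=\Frob_\ell^2=1$ on $E_{p^m}$, the integral eigenspace splitting $E_{p^m}=E_{p^m}^+\oplus E_{p^m}^-$ coming from $p$ odd and $\det(\tau)=-1$, and the explicit induction $\mathrm{Ind}^{D_m}_{\langle\tau\rangle}(\Z/p^m)^{(\epsilon)}\simeq R_m^{(\epsilon)}$---are all correctly handled.

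One small point worth tightening: your final sentence in the proof of part (1), that the submodules $\bigl(E(K_{m,\ell})/p^m\bigr)^{(\pm)}$ ``are then identified with the two corresponding summands'', deserves a word of justification given the paper's definition $\bigl(\cdot\bigr)^{(\pm)}:=R_m\cdot(\cdot)^\pm$. Strictly speaking the $R_m$-span of the full $\pm$-eigenspace meets \emph{both} summands $R_m^{(+)}$ and $R_m^{(-)}$, since each $R_m^{(\epsilon)}$ contains elements in either $\tau$-eigenspace. What is actually being asserted (and what is used later in the paper) is the existence of cyclic $\tilde R_m$-submodules isomorphic to $R_m^{(+)}$ and $R_m^{(-)}$ giving the direct-sum decomposition; your induction argument produces exactly this, so the content is there, but you might phrase the conclusion directly in terms of the decomposition rather than the $(\pm)$-submodules as literally defined.
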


\begin{proof} Part (1) is \cite[\S 1.2, Lemma 4]{Ber1}, while part (2) is \cite[\S 1.2, Corollary 6]{Ber1}. \end{proof}

\subsection{Action of complex conjugation} \label{sec-complex-conj}

In this subsection we study the action of $\Gal(K/\Q)$ on Selmer groups. These results will be used in \S \ref{sec-families} to show the existence of suitable families of Kolyvagin primes. 

The canonical action of $\tau$ on $\mathcal X_\infty^\pm$ makes it into a $\tilde\Lambda$-module. Recall the element $x\in \mathcal X_\infty^\pm$ chosen at the beginning of this section such that $\pi^\pm  (x)\neq0$ and $\tau(x)=\epsilon x$ for some $\epsilon\in\{\pm\}$. Now pick an element $y\in\ker(\pi^\pm )^{-\epsilon}$ and consider the surjection of $\tilde\Lambda$-modules
\[ \Lambda^{(\epsilon)}\oplus\Lambda^{(-\epsilon)}\longepi\Lambda x\oplus\Lambda y\subset\mathcal X_\infty^\pm   \oplus\mathcal X_\infty^\pm   ,\qquad(\xi,\eta)\longmapsto(\xi x,\eta y). \] Since $\mathcal H_\infty^\pm$ is torsion-free by Proposition \ref{H-rank-prop}, $\ker(\pi^\pm )\cap \Lambda x=\{0\}$, hence $\Lambda x\cap\Lambda y=\{0\}$. Therefore the canonical map of $\tilde\Lambda$-modules $\Lambda x\oplus\Lambda y\rightarrow \mathcal X_\infty^\pm   $ given by the sum is injective. Composing the last two maps, we get a map of $\tilde\Lambda$-modules 
\begin{equation} \label{theta-eq}
\vartheta^\pm:\Lambda^{(\epsilon)}\oplus\Lambda^{(-\epsilon)}\longrightarrow\mathcal X_\infty^\pm   
\end{equation}
that sends $(\alpha,\beta)$ to $\alpha x+\beta y$.

By Lemma \ref{inj-selmer-lemma}, there is a canonical injection 
\begin{equation}\label{eq5}
\Sel^\pm _{p^m}(E/K_m)\longmono\Sel^\pm _{p^\infty}(E/K_m).
\end{equation}
Let 
\[ \mathcal Z^\pm_m:=\Hom_{\Z_p}\bigl(\Sel^\pm _{p^m}(E/K_m),\Q_p/\Z_p\bigr) \]
be the Pontryagin dual of $\Sel^\pm _{p^m}(E/K_m)$. One may then consider the surjection of compact $\Lambda$-modules  
\begin{equation} \label{p_m-eq}
p_m^\pm:\mathcal X_\infty^\pm\longepi\mathcal X_\infty^\pm/\omega_m^\pm\mathcal X_\infty^\pm\longepi \mathcal X^\pm_m/\omega_m^\pm\mathcal X^\pm_m\longepi\mathcal Z^\pm_m/\omega_m^\pm \mathcal Z^\pm_m,
\end{equation}
where the first arrow is the canonical projection, the second is \eqref{coro-CT} and the third is obtained from \eqref{eq5} by Pontryagin duality. 


Let us define the following $R_m^\pm$-submodules of $\mathcal Z^\pm _m /\omega_m^\pm\mathcal Z^\pm_m$: 
\[ \begin{split}
   Z^\pm_m&:=\bigl((p^\pm_m\circ\vartheta)(\Lambda^{(\epsilon)}\oplus\{0\}))\bigr)\cap\big((p^\pm_m\circ\vartheta)(\{0\}\oplus \Lambda^{(-\epsilon)})\big),\\
  W_m^{\pm,(\epsilon)}&:=\big((p^\pm_m\circ\vartheta)(\Lambda^{(\epsilon)}\oplus\{0\})\big)/Z_m,\\
  W_m^{\pm,(-\epsilon)}&:= \big((p^\pm_m\circ\vartheta)(\{0\}\oplus \Lambda^{(-\epsilon)})\big)/Z_m.
   \end{split} \]
Set
\[ \Sigma^\pm_m:=\bigl((\mathcal Z^\pm _m /\omega_m^\pm\mathcal Z^\pm _m)/Z^\pm_m\bigr)^{\!\vee}.\] 
The submodule $Z^\pm_m$ being closed in $\mathcal Z^\pm _m /\omega_m^\pm\mathcal Z^\pm_m $, Pontryagin duality yields a natural injection $\Sigma^\pm_m\hookrightarrow \Sel_{p^m}^\pm(E/K_m)^{\omega_m^\pm=0}$ of $R^\pm _m $-modules. We obtain a chain of maps of $\tilde\Lambda$-modules 
\begin{equation} \label{chain-eq}
\Lambda^{(\epsilon)}\oplus\Lambda^{(-\epsilon)}\longepi W_m^{\pm,(\epsilon)}\oplus W_m^{\pm,(-\epsilon)}\longmono(\Sigma_m^\pm)^\vee
\end{equation}
in which the surjection is induced by $p_m\circ\vartheta$ and the injection is given by the sum of the components. By construction, the composition in \eqref{chain-eq} factors through the surjection 
\[ \Lambda^{(\epsilon)}\oplus\Lambda^{(-\epsilon)}\longepi\Lambda_m^{(\epsilon)}\oplus\Lambda_m^{(-\epsilon)}\longepi(R^\pm_m)^{ (\epsilon)}\oplus(R^\pm_m)^{(-\epsilon)}. \] 
Write
\[ \vartheta^\pm_m:(R^\pm_m)^{(\epsilon)}\oplus(R^\pm_m)^{(-\epsilon)}\longepi W_m^{\pm,(\epsilon)}\oplus W_m^{\pm,(-\epsilon)}\longmono(\Sigma_m^\pm)^\vee \]
for the resulting map of $\tilde R_m^\pm$-modules; if $\bar x$ and $\bar y$ denote the images of $x$ and $y$ in $(\Sigma_m^\pm)^\vee$ then $\vartheta^\pm_m((\alpha,\beta))=\alpha\bar x+\beta\bar y$. 

\begin{lemma} \label{isom-lemma}
There is an isomorphism of $R_m^\pm$-modules $(R_m^\pm)^\vee\simeq\omega_m^\mp R_m$.
\end{lemma}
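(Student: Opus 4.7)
My plan is to exploit the self-duality of the finite group ring $R_m=(\Z/p^m\Z)[G_m]$ to reduce the computation of $(R_m^\pm)^\vee$ to a polynomial annihilator calculation inside $R_m$. First, since $G_m$ is a finite abelian group and $\Z/p^m\Z$ is a principal Artinian local ring, $R_m$ is self-dual as an $R_m$-module: the trace pairing $R_m\times R_m\to\Z/p^m\Z$ sending $(\sigma,\tau)$ to the coefficient of $1$ in $\sigma\tau$ produces a canonical isomorphism $R_m^\vee\simeq R_m$. Under the presentation $R_m\simeq(\Z/p^m\Z)[X]/(\omega_m)$, the factorisation $\omega_m=\omega_m^\pm\tilde\omega_m^\mp$ makes $\omega_m^\pm R_m$ the kernel of the quotient map $R_m\twoheadrightarrow R_m^\pm$.

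Second, I would dualise the short exact sequence
\[ 0\longrightarrow\omega_m^\pm R_m\longrightarrow R_m\longrightarrow R_m^\pm\longrightarrow 0 \]
and, using the self-duality of the first step, identify $(R_m^\pm)^\vee$ canonically with the annihilator $\mathrm{Ann}_{R_m}(\omega_m^\pm)$ inside $R_m$. This identification is automatically $R_m^\pm$-equivariant because the annihilator is killed by $\omega_m^\pm$.

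Third, I would compute this annihilator by polynomial division in $\Z_p[X]$. Given $f\in R_m$ with $\omega_m^\pm f\equiv 0\pmod{\omega_m,p^m}$, divide a lift of $f$ by the monic polynomial $\tilde\omega_m^\mp$ as $f=q\tilde\omega_m^\mp+r$ with $\deg r<\deg\tilde\omega_m^\mp$. Substituting and using $\omega_m=\omega_m^\pm\tilde\omega_m^\mp$ yields $\omega_m^\pm r\in p^m\Z_p[X]$; since $\omega_m^\pm$ is distinguished, the degree bound $\deg(\omega_m^\pm r)<\deg\omega_m$ then forces $r\in p^m\Z_p[X]$, i.e.\ $f\in\tilde\omega_m^\mp R_m$. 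The reverse inclusion is obvious, so $\mathrm{Ann}_{R_m}(\omega_m^\pm)=\tilde\omega_m^\mp R_m$, which gives $(R_m^\pm)^\vee\simeq\tilde\omega_m^\mp R_m$ as $R_m^\pm$-modules.

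The hard part is then to reconcile this natural output with the stated target $\omega_m^\mp R_m=X\tilde\omega_m^\mp R_m$. Since $X=\gamma_\infty-1$ is a zero-divisor in $R_m^\pm$, multiplication by $X$ is not invertible on $\tilde\omega_m^\mp R_m\simeq R_m^\pm$, so the inclusion $\omega_m^\mp R_m\subseteq\tilde\omega_m^\mp R_m$ is a priori strict and a direct substitution will not suffice. My plan for this final step is to revisit the trace pairing from the first step and twist the chosen dual generator of $R_m^\vee\simeq R_m$ by an element of $R_m$ so that, under the identification $(R_m^\pm)^\vee\hookrightarrow R_m$, the image lands precisely in the submodule $\omega_m^\mp R_m$; equivalently, to realise the isomorphism via the composition
\[ R_m^\pm\overset{\cdot\,\tilde\omega_m^\mp}{\xrightarrow{\;\;\sim\;\;}}\tilde\omega_m^\mp R_m\xrightarrow{\;\;\sim\;\;}\omega_m^\mp R_m \]
where the second arrow accounts for the extra factor of $X=\gamma_\infty-1$ via the canonical isomorphism $\Lambda_m^\pm\simeq\tilde\omega_m^\mp\Lambda_m$ reduced modulo $p^m$. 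Checking that this composite really is an $R_m^\pm$-module isomorphism (and not merely a surjection with nontrivial kernel) is the delicate point where I expect the main subtlety of the lemma to be concentrated.
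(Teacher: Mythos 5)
Your computation through the identification $(R_m^\pm)^\vee\simeq\tilde\omega_m^\mp R_m$ is correct and is essentially the paper's own argument: the paper likewise starts from the self-duality $R_m^\vee\simeq R_m$, dualizes the surjection $R_m\twoheadrightarrow R_m^\pm$ to get an injection $(R_m^\pm)^\vee\hookrightarrow R_m$ whose image is killed by $\omega_m^\pm$, hence lands in $\tilde\omega_m^\mp R_m$, and then checks surjectivity onto $\tilde\omega_m^\mp R_m$ by noting that $\tilde\omega_m^\mp\varphi_x$ annihilates $\omega_m^\pm R_m$; your polynomial-division computation of $\mathrm{Ann}_{R_m}(\omega_m^\pm)=\tilde\omega_m^\mp R_m$ is an equivalent way of pinning down the image.

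The ``delicate final step'' you postpone should, however, be abandoned rather than completed: no twisting of the pairing can yield an isomorphism $(R_m^\pm)^\vee\simeq\omega_m^\mp R_m$, because these two finite modules do not even have the same order. Indeed, the same division argument (now using $\omega_m=\omega_m^\mp\tilde\omega_m^\pm$) gives $\mathrm{Ann}_{R_m}(\omega_m^\mp)=\tilde\omega_m^\pm R_m$, so $\omega_m^\mp R_m\simeq R_m\big/\tilde\omega_m^\pm R_m\simeq(\Z/p^m\Z)[X]\big/(\tilde\omega_m^\pm)$ has order $p^{m\deg\tilde\omega_m^\pm}$, whereas $(R_m^\pm)^\vee$ has order $\#R_m^\pm=p^{m\deg\omega_m^\pm}=p^{m(\deg\tilde\omega_m^\pm+1)}$; equivalently, as you observed, multiplication by $X$ on $\tilde\omega_m^\mp R_m\simeq R_m^\pm$ kills the (nonzero) image of $\tilde\omega_m^\pm$, hence cannot be surjective. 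The displayed statement of Lemma \ref{isom-lemma} is a misprint for $(R_m^\pm)^\vee\simeq\tilde\omega_m^\mp R_m$: that is exactly what the paper's proof establishes, it is the version used later (the tilde appears in the definition of $i_m^{\pm,(\pm\epsilon)}$ in \eqref{i_m}), and it matches the canonical isomorphism $\Lambda_m^\pm\simeq\tilde\omega_m^\mp\Lambda_m$ from Section \ref{sec2}. So your proof, stopped at $\tilde\omega_m^\mp R_m$, is complete and correct for the intended statement, and the subtlety you flagged is not a gap in your argument but evidence that the literal statement cannot hold.
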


\begin{proof} Since $R_m=\Z/p^m\Z[G_m]$, there is a canonical isomorphism
\[ \Phi:R_m^\vee\overset\simeq\longrightarrow R_m \]
of $R_m$-modules (hence of $\Lambda$-modules). The surjection $R_m\twoheadrightarrow R_m^\pm$ induces, by duality, an injection $i:(R_m^\pm)^\vee\hookrightarrow R_m^\vee$, and we obtain an injection $\Phi\circ i:(R_m^\pm)^\vee\hookrightarrow R_m$. The image of $\Phi\circ i$ is annihilated by $\omega_m^\pm$, hence $\Phi\circ i$ induces an injection of $R_m$-modules 
\[ \Psi:(R_m^\pm)^\vee\longmono\tilde\omega_m^\mp R_m.\] 
Now pick $x\in R_m$ and consider $\tilde\omega_m^\mp x\in\tilde\omega_m^\mp R_m$. If $\varphi_x\in R_m^\vee$ is such that $\Phi(\varphi_x)=x$ then $\Phi(\tilde\omega_m^\mp \varphi_x)=\tilde\omega_m^\mp x$. In order to show that $\Psi$ is surjective we need to check that $\tilde\omega_m^\mp \varphi_x$ factors through $R_m^\pm$. But this is clear: $(\tilde\omega_m^\mp\varphi_x)(\omega_m^\pm \lambda)=\varphi_x(\omega_m^\pm\tilde\omega_m^\mp\lambda)=0$ for all $\lambda\in R_m$ because $\omega_m=\omega_m^\pm\tilde\omega_m^\mp$ kills $R_m$. \end{proof}

Taking the $\Gal(K/\Q)$-action into account, Lemma \ref{isom-lemma} yields isomorphisms $\bigl((R_m^\pm)^\vee\bigr)^{(\pm\epsilon)}\simeq(\omega_m^\mp R_m)^{(\pm\epsilon)}$ of $\tilde R_m^\pm$-modules. Furthermore, $\bigl((R_m^\pm)^\vee\bigr)^{(\pm\epsilon)}=\bigl((R_m^\pm)^{(\pm\epsilon)}\bigr)^\vee$ and $(\omega_m^\mp R_m)^{(\pm\epsilon)}\simeq (R_m^\pm)^{(\pm\epsilon)}$ under the isomorphism $\omega_m^\mp R_m\simeq R_m^\pm$. Composing these isomorphisms, we get an isomorphism of $\tilde R_m^\pm$-modules 
\begin{equation} \label{i_m}
i_m^{\pm,(\pm\epsilon)}:\left((R_m^\pm)^{(\pm\epsilon)}\right)^\vee \overset\simeq\longrightarrow  
(\tilde\omega_m^\mp R_m)^{(\pm\epsilon)}\overset{\simeq}\longrightarrow(R_m^\pm)^{(\pm\epsilon)}.
\end{equation}  
Set $i^\pm_m:=i_m^{\pm,(\epsilon)}\oplus i_m^{\pm,(-\epsilon)}$. Composing the Pontryagin dual $(\vartheta^\pm_m)^\vee$ of $\vartheta^\pm_m$ with $i^\pm_m$, we get a map of $\tilde R^\pm _m$-modules that we still denote by  
\begin{equation} \label{theta_m-dual}
(\vartheta^\pm_m)^\vee:\Sigma^\pm_m\longrightarrow  ( R^\pm_m)^{(\epsilon)}\oplus(R^\pm_m)^{(-\epsilon)}. 
\end{equation}
If $\overline{\Sigma}^\pm_m:=\Sigma^\pm_m\big/\ker\bigl((\vartheta^\pm_m)^\vee\bigr)$ then there is an injection $(\bar\vartheta^\pm_m)^\vee:\overline{\Sigma}^\pm_m\longmono(R^\pm_m)^{(\epsilon)}\oplus(R^\pm_m)^{(-\epsilon)}$ of $\tilde R^\pm _m$-modules. Define 
\[ \overline{\Sigma}_m^{\pm,(\epsilon)}:=\bigl((\bar{\vartheta}^\pm_m)^{\vee}\bigr)^{-1}\bigl( ( R^\pm_m)^{(\epsilon)}\oplus\{0\}\bigr),\qquad\overline{\Sigma}_m^{\pm,(-\epsilon)}:=\bigl((\bar{\vartheta}^\pm_m)^{\vee}\bigr)^{-1}\bigl(\{0\}\oplus(R^\pm_m)^{(-\epsilon)}\bigr). \] 
Then there is a splitting 
\begin{equation} \label{eq-Sigma_m}
\overline{\Sigma}^\pm_m=\overline{\Sigma}_m^{\pm,(\epsilon)}\oplus\overline{\Sigma}_m^{\pm,(-\epsilon)}
\end{equation}
of $\tilde R^\pm_m$-modules. Taking $G_m$-invariants, we obtain an injection 
\[ \bigl(\overline{\Sigma}_m^{\pm,(\pm\epsilon)}\bigr)^{G_m}\longmono\bigl( ( R^\pm_m)^{(\pm\epsilon)}\bigr)^{G_m}\simeq\Z/p^m\Z \]
of $\Z /p^m\Z$-modules, hence $\bigl(\overline{\Sigma}_m^{\pm,(\pm\epsilon)}\bigr)^{G_m}$ is isomorphic to $\Z/p^{m^{\pm,(\pm\epsilon)}}\Z$ for a suitable integer $0\leq m^{\pm,(\pm\epsilon)}\leq m$ (of course, nothing prevents $\bigl(\overline{\Sigma}_m^{\pm,(\pm\epsilon)}\bigr)^{G_m}$ from being trivial).   

\subsection{Compatibility of the maps} \label{sec-duals} 

In order to ensure compatibility of the various maps appearing in the previous subsection as $m$ varies, in the sequel it will be useful to make a convenient choice of the isomorphism $i_m^{\pm,(\epsilon)}$ introduced in \eqref{i_m}.  
 
Let 
\[ \pi^\pm_m:\mathcal Z^\pm _m/\omega_m\mathcal Z^\pm_m\longepi\mathcal H_m^\pm:=\Hom_{\Z_p}(\E^\pm _m,\Q_p/\Z_p) \] 
denote the dual of the inclusion $\E^\pm_m\subset\Sel_{p^m}^\pm (E/K_m)^{\omega_m^\pm =0}$. Since $y\in\ker(\pi^\pm)$, we have $\pi^\pm_m(Z^\pm_m)=0$, hence there is a surjection $\bar\pi^\pm_m:(\Sigma^\pm_m)^\vee\twoheadrightarrow\mathcal H_m^\pm$ showing, by duality, that $\E^\pm_m$ is actually a submodule of $\Sigma^\pm_m$. Since $(\bar\pi^\pm_m\circ\vartheta^\pm_m)\bigl(\{0\}\oplus(R_m^\pm)^{(-\epsilon)}\bigr)=\{0\}$, again because $y\in \ker(\pi^\pm)$, the dual of $\bar\pi^\pm_m\circ\vartheta^\pm_m$ factors through a map 
\[ \tilde\psi^\pm_m:\E^\pm _m\longrightarrow\bigl((R_m^\pm)^{(\epsilon)}\bigr)^\vee. \] 

\begin{proposition} \label{prop:compatibilities} 
One can choose the isomorphisms $i_m^{\pm,(\epsilon)}$ in \eqref{i_m} so that if $\psi^\pm_m$ denotes the composition 
\[ \psi^\pm_m:\E^\pm _m\xrightarrow{\tilde\psi^\pm_m}\bigl((R_m^\pm)^{(\epsilon)}\bigr)^\vee\xrightarrow{i_m^{\pm,(\epsilon)}}(R_m^\pm)^{(\epsilon)} \]
then the $R^\pm_m$-modules $\psi^\pm_m(\E^\pm_m)$ are generated by elements $\theta^\pm_m\in R^\pm_m$ satisfying
\[ \theta^\pm_\infty:={(\theta^\pm_m)}_{m\geq1}\in\varprojlim_m R^\pm_m\simeq\Lambda. \] 
\end{proposition}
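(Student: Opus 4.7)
The plan is as follows. Since $\E_m^\pm = R_m^\pm\alpha_m^\pm$ is a cyclic $R_m^\pm$-module by construction, the image $\psi_m^\pm(\E_m^\pm)$ is automatically cyclic over $R_m^\pm$, with the natural choice of generator $\theta_m^\pm := \psi_m^\pm(\alpha_m^\pm)$. The proposition therefore reduces to exhibiting isomorphisms $i_m^{\pm,(\epsilon)}$ making these specific generators compatible under the transition maps $R_{m+1}^\pm \twoheadrightarrow R_m^\pm$ that define the inverse limit $\Lambda=\varprojlim_m R_m^\pm$.

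To fix the $i_m^{\pm,(\epsilon)}$ coherently, I would use the canonical group-algebra pairing on $R_m=(\Z/p^m\Z)[G_m]$, namely $(a,b)\mapsto\mathrm{coeff}_1(a\cdot b^\iota)$, where $b^\iota$ is the involution $g\mapsto g^{-1}$. This pairing yields the canonical identification $R_m^\vee\simeq R_m$ already invoked in the proof of Lemma \ref{isom-lemma}, respects $\tau$-eigendecompositions, and crucially is compatible with the inverse-system structure on $(R_m)_m$: the dual of the projection $R_{m+1}\twoheadrightarrow R_m$ corresponds to multiplication by $\tilde\omega_m^\mp$, which is precisely how $R_m^\pm\hookrightarrow R_{m+1}^\pm$ is realized through \eqref{i_m}. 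Restricting to $\epsilon$-parts thus produces a coherent system of $i_m^{\pm,(\epsilon)}$.

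It then remains to verify that with this choice the square
\[
\xymatrix@C=38pt@R=28pt{
\E_{m+1}^\pm \ar[r]^-{\psi_{m+1}^\pm} & (R_{m+1}^\pm)^{(\epsilon)} \ar@{->>}[d] \\
\E_m^\pm \ar@{^(->}[u]^-{\rho_m^\pm} \ar[r]^-{\psi_m^\pm} & (R_m^\pm)^{(\epsilon)}
}
\]
commutes in a way that forces $\theta_{m+1}^\pm\mapsto\theta_m^\pm$. This splits into two compatibility checks. First, the maps $\tilde\psi_m^\pm$ form a compatible system because they are the Pontryagin duals of $\bar\pi_m^\pm\circ\vartheta_m^\pm$, and these latter are themselves obtained by level-$m$ reduction of the global map $\vartheta$ of \eqref{theta-eq}, coupled with the control theorem (Theorem \ref{CT-thm}) and the naturality of the compatibility squares recorded at the end of \S \ref{control-subsec}. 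Second, the trace relations for $\alpha_m^\pm$ collected in Proposition \ref{cores-prop} (and transcribed via \eqref{res-cores-norm} in the proof of Proposition \ref{inj-prop}) must be shown to correspond, under the identification above, to the projection behaviour of the group-algebra pairing on the $R_m$-side.

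The main technical obstacle will be the careful bookkeeping of the $\pm p$ factors that appear in the Heegner trace relations: each factor must be checked to match the corresponding factor that arises from the projection $R_{m+1}^\pm\twoheadrightarrow R_m^\pm$ through the chosen pairing, separately in the even and odd cases and for both signs. Once this reconciliation is carried out, $(\theta_m^\pm)_{m\geq1}$ defines an element of $\varprojlim_m R_m^\pm\simeq\Lambda$, giving the sought-after $\theta_\infty^\pm$.
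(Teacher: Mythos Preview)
Your plan correctly identifies the target—$\theta_m^\pm:=\psi_m^\pm(\alpha_m^\pm)$ generates $\psi_m^\pm(\E_m^\pm)$, and the problem is to make these compatible in $\varprojlim_m R_m^\pm$—but your route differs from the paper's in two substantive ways.

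First, the paper works \emph{downward} via corestriction $\cores_{K_{m+1}/K_m}:\E_{m+1}^\pm\to\E_m^\pm$, not upward via $\rho_m^\pm$. This matters because corestriction runs in the same direction as the projection $R_{m+1}^\pm\twoheadrightarrow R_m^\pm$, so one can draw a genuine commutative square
\[
\xymatrix@C=33pt{
\E_{m+1}^+ \ar[r]^-{\tilde\psi_{m+1}^+}\ar[d]^-{\cores} & (R_{m+1}^+)^\vee \ar[r]^-\simeq & \tilde\omega_{m+1}^- R_{m+1} \ar@{->>}[d] \ar[r]^-\simeq & R_{m+1}^+ \ar@{->>}[d] \\
\E_m^+ \ar[r]^-{\tilde\psi_m^+} & (R_m^+)^\vee \ar[r]^-\simeq & \tilde\omega_m^- R_m \ar[r]^-\simeq & R_m^+
}
\]
(odd $m$; for even $m$ the middle vertical is division by $p$, reflecting $\tilde\omega_{m+1}^-\equiv p\tilde\omega_m^-$ in $\Lambda_m$ from Remark~\ref{cong-remark}). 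The corestriction formula $\cores(\alpha_{m+1}^+)=-\alpha_m^+$ (odd $m$) or $p\alpha_m^+$ (even $m$) then matches the right-hand vertical transparently: the extra $p$ in the even case cancels the $1/p$ in the middle column.

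Second, the paper does \emph{not} try to fix the $i_m^{\pm,(\epsilon)}$ canonically via a global pairing. It simply takes any isomorphism at level $m+1$, observes that the induced map at level $m$ differs from the chosen $i_m^{\pm,(\epsilon)}$ by a unit $u_m$, and replaces $i_m^{\pm,(\epsilon)}$ by $u_m i_m^{\pm,(\epsilon)}$. This inductive adjustment sidesteps entirely the need to verify that a single canonical pairing is simultaneously compatible with all the Heegner trace relations.

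Your square with $\rho_m^\pm$ going up does not directly yield $\theta_{m+1}^\pm\mapsto\theta_m^\pm$: for even $m$ it gives $\mathrm{proj}(p\theta_{m+1}^+)=\theta_m^+$ (since $\rho_m(\alpha_m^+)=p\alpha_{m+1}^+$), and the naive projection $R_{m+1}^+\to R_m^+$ is just reduction mod $p^m$ when $\omega_m^+=\omega_{m+1}^+$, so the $p$ does not cancel. The ``bookkeeping'' you defer is thus not a routine check but the whole substance of the argument, and it is considerably easier on the corestriction side, where the $p$-factors visibly pair off against those in Remark~\ref{cong-remark}.
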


\begin{proof} Here we consider only the case of sign $+$, the other case being similar. 
Since the statement is independent of the $\Gal(K/\Q)$-action (all maps are equivariant for this action), we ignore it. For each $m\geq1$ fix a generator $\theta_m^+$ of $\psi_m^+(\E_m^+)$ and use the shorthand ``$\cores$'' for the corestriction map $\cores_{K_{m+1}/K_m}$. First suppose that $m$ is odd. In this case $\tilde\omega_m^-=\tilde\omega_{m+1}^-$ and $\cores(\alpha_{m+1}^+)=-\alpha_{m}^+$. There is a commutative diagram 
\[ \xymatrix@C=35pt{\mathcal E_{m+1}^+\ar[r]^-{\tilde\psi^+_{m+1}}\ar[d]^-{\cores}& \left(R_{m+1}^+\right)^\vee  \ar[r]^-\simeq 
& \tilde \omega_{m+1}^- R_{m+1}\ar@{->>}[d]
\ar[r]^-\simeq & R_{m+1}^+\ar@{->>}[d]
 \\
\mathcal E_m^+\ar[r]^-{\tilde\psi^+_{m}}&  \left(R_m^+\right)^\vee  \ar[r]^-\simeq 
& \tilde \omega_m^- R_m
\ar[r]^-\simeq & R_m^+} \]  
where the vertical unadorned arrows are projections, and replacing $i_m^{+,(\epsilon)}$ with $u_mi_m^{+,(\epsilon)}$ for a suitable unit $u_m$ 
gives the compatibility of $\theta^+_{m+1}$ and $\theta^+_m$ under projection. Now suppose that $m$ is even. In this case $\tilde\omega_{m+1}^-\equiv p\tilde\omega_m^-$ in $\Lambda_m$ and $\cores(\alpha_{m+1}^+)=p\alpha_m^+$. Therefore there is a commutative diagram
\[ \xymatrix@C=35pt{\mathcal E_{m+1}^+\ar[r]^-{\tilde\psi_{m+1}}& \left(R_{m+1}^+\right)^\vee  \ar[r]^-\simeq & \tilde \omega_{m+1}^- R_{m+1}\ar@{->>}[d]^-{1/p}
\ar[r]^-\simeq & R_{m+1}^+\ar@{->>}[d]\\ \mathcal E_m^+\ar[r]^-{\tilde\psi_{m}}&  \left(R_m^+\right)^\vee  \ar[r]^-\simeq 
& \tilde \omega_m^- R_m
\ar[r]^-\simeq & R_m^+} \] 
that again shows the compatibility between $\theta^+_{m+1}$ and $\theta^+_m$. The result follows. \end{proof}

From now on, fix the isomorphisms $i_m^{\pm,(\epsilon)}$ as in Proposition \ref{prop:compatibilities}, so that $\theta^\pm_\infty\in\Lambda$.  In the following, we will implicitly identify $(R_m^\pm)^{(\epsilon)}$ and its Pontryagin dual by means of the above maps. We will also identify $(R_m^\pm)^{(-\epsilon)}$ with its Pontryagin dual, but we will not need to specify a convenient isomorphism in this case. 

\subsection{Galois extensions} \label{galois-subsec}

We introduce several Galois extensions attached to the modules defined in \S \ref{sec-complex-conj}; in doing this, we follow \cite[\S 1.3]{Ber1} closely. We start with a discussion of a general nature. 

For any $\Z/p^m\Z$-submodule $S\subset\Sel_{p^m}(E/K_m)$ we define the extension $M_S$ of $K_m(E_{p^m})$ cut out by $S$ as follows. Set 
\[ \mathcal G_m:=\Gal\bigl(K_m(E_{p^m})/K_m\bigr). \] 
With a slight abuse, we shall often view $\mathcal G_m$ as a subgroup of $\GL_2(\Z/p^m\Z)$, according to convenience. By \cite[\S 1.3, Lemma 2]{Ber1}, whose proof does not use the ordinariness of $E$ at $p$ assumed in \emph{loc. cit.}, there is an isomorphism 
\[ \mathcal G_m\simeq\GL_2(\Z/p^m\Z). \]
By \cite[\S 1.3, Lemma 1]{Ber1}, whose proof works in our case too, restriction gives an injection 
\[ \Sel_{p^m}(E/K_m)\longmono \Sel_{p^m}\bigl(E/K_m(E_{p^m})\bigr)^{\mathcal G_m}. \]
Define $G_{K_m(E_{p^m})}^{\rm ab}:=\Gal\bigl(K_m(E_{p^m})^{\rm ab}/K_m(E_{p^m})\bigr)$ where $K_m(E_{p^m})^{\rm ab}$ is the maximal abelian extension of $K_m(E_{p^m})$. It follows that there is an identification 
\[ H^1\bigl(K_m(E_{p^m}),E_{p^m}\bigr)^{\mathcal G_m}=\Hom_{\mathcal G_m}\bigl(G_{K_m(E_{p^m})}^{\rm ab},E_{p^m}\bigr) \] 
of $\Z/p^m\Z$-modules, where $\Hom_{\mathcal G_m}(\bullet,\star)$ stands for the group of $\mathcal G_m$-homomorphisms. Thus we obtain an injection of $\Z/p^m\Z$-modules 
\begin{equation} \label{eq13}
S\longmono\Hom_{\mathcal G_m}\bigl(G_{K_m(E_{p^m})}^{\rm ab},E_{p^m}\bigr),\qquad s\longmapsto\varphi_s,
\end{equation}
and for every $s\in S$ we let $M_s$ denote the subfield of $K_m(E_{p^m})^{\rm ab}$ fixed by $\ker(\varphi_s)$. In other words, $M_s$ is the smallest abelian extension of 
${K_m(E_{p^m})}$ such that the restriction of $\varphi_s$ to $\Gal(K_m(E_{p^m})^{\rm ab}/M_s)$ is trivial. The maps $\varphi_s$ induce injections  
\[ \varphi_s:\Gal\bigl(M_s/K_m(E_{p^m})\bigr)\longmono E_{p^m} \]
of $\mathcal G_m$-modules. Let $M_S\subset K_m(E_{p^m})^{\rm ab}$ denote the composite of all the fields $M_s$ for $s\in S$.

By \cite[Lemma 3 p. 159]{Ber1}, the map 
\begin{equation} \label{eq14}
\Gal\bigl(M_S/K_m(E_{p^m})\bigr)\longrightarrow\Hom(S,E_{p^m}),\qquad g\longmapsto\bigl(s\mapsto\varphi_s({g|}_{M_s})\bigr) 
\end{equation}
is a $\mathcal G_m$-isomorphism and \eqref{eq13} induces an isomorphism 
\[ S\overset\simeq\longrightarrow\Hom_{\mathcal G_m}\bigl(\Gal(M_S/K_m(E_{p^m})),E_{p^m}\bigr) \]
of $\Z/p^m\Z$-modules;  here $\Hom(\bullet,\star)$ is a shorthand for $\Hom_{\Z/p^m\Z}(\bullet,\star)$. One can show that, given two subgroups $S'\subset S\subset \Sel_{p^m}(E/K_m)$,  there is a canonical isomorphism of groups 
\begin{equation} \label{S'-prop}
\Gal(M_S/M_{S'})\simeq\Hom(S/S',E_{p^m})
\end{equation}
and, conversely, for every subgroup $\bar S$ of $S/S'$ there is a subextension $M_{\bar S}/M_{S'}$ of $M_S/M_{S'}$ such that 
\begin{equation} \label{S'-prop-bis}
\Gal(M_{\bar S}/M_{S'})\simeq\Hom(\bar S,E_{p^m}). 
\end{equation}
In this case, we say that $M_S/M_{S'}$ is the extension associated with the quotient $S/S'$. 

Now we apply these constructions to the setting of \S \ref{sec-complex-conj}. To simplify the notation, put 
\[ \Sel^\pm_m:=\Sel_{p^m}^\pm(E/K_m)^{\omega_m^\pm =0},\qquad\Sel_\infty^\pm:=\varinjlim_m\Sel^\pm_m. \]
Let $M^\pm_m$ denote the field cut out by the subgroup $\Sel^\pm_m$; then $M^\pm_m\subset M^\pm_{m+1}$. By construction, there are canonical surjections
\begin{equation} \label{surj-gal-prop}
\Gal\bigl(M^\pm_{m+1}/K_{m+1}(E_{p^{m+1}})\bigr)\longepi\Gal\bigl(M^\pm_m/K_m(E_{p^m})\bigr).
\end{equation}
Define
\[ M^\pm_\infty:=\varinjlim_mM^\pm_m,\qquad K_\infty(E_{p^\infty}):=\varinjlim_m K_m(E_{p^m}), \]
so that 
\[ \Gal\bigl(M^\pm_\infty/K_\infty(E_{p^\infty})\bigr)=\invlim_m\Gal\bigl(M^\pm_m/K_m(E_{p^m})\bigr), \]
the inverse limit being taken with respect to the maps in \eqref{surj-gal-prop}. By \eqref{eq14}, for every $m\geq0$ there is an isomorphism 
\[ \Gal\bigl(M^\pm_m/K_m(E_{p^m})\bigr)\simeq\Hom\bigl(\Sel^\pm_m,E_{p^m}\bigr) \] 
of $\Z/p^m\Z[\mathcal G_m]$-modules, hence there is an isomorphism of $\Z_p[\![\mathcal G_\infty]\!]$-modules  
\[ \Gal\bigl(M^\pm_\infty/K_\infty(E_{p^\infty})\bigr)\simeq\Hom\bigl(\Sel^\pm_\infty,E_{p^\infty}\bigr), \] 
where $\Z_p[\![\mathcal G_\infty]\!]:=\sideset{}{_m}\invlim\Z_p[\mathcal G_m]$ is defined with respect to the canonical maps $\mathcal G_{m+1}\rightarrow\mathcal G_m$. 

Now recall the map $(\vartheta^\pm_m)^\vee$ of \eqref{theta_m-dual} and let $L^\pm_m\subset M^\pm_m$ be the extension of $K_m(E_{p^m})$ cut out by $\ker\bigl((\vartheta^\pm_m)^\vee\bigr)$. Then there are canonical $\mathcal G_m$-isomorphisms
\[ \Gal\bigl(L^\pm_m/K_m(E_{p^m})\bigr)\simeq\Hom\bigl(\ker\bigl((\vartheta^\pm_m)^\vee\bigr),E_{p^m}\bigr) \]  
and  
\begin{equation} \label{L_m-isom2-eq}
\Gal(M^\pm_m/L^\pm_m)\simeq\Hom\bigl(\overline{\Sigma}^\pm_m,E_{p^m}\bigr)\simeq\Hom\!\Big(\overline{\Sigma}_m^{\pm,(\epsilon)},E_{p^m}\Big)\oplus\Hom\!\Big(\overline{\Sigma}_m^{\pm,(-\epsilon)},E_{p^m}\Big);
\end{equation}
here \eqref{L_m-isom2-eq} is a consequence of \eqref{S'-prop-bis}. Moreover, write $L_m^{\pm,(\pm\epsilon)}$ for the subextension of $M^\pm_m/L^\pm_m$ corresponding to $\overline{\Sigma}_m^{\pm,(\pm\epsilon)}$ (cf. \eqref{S'-prop}); then $L_m^{\pm,(\epsilon)}\cap L_m^{\pm,(-\epsilon)}=L^\pm_m$ and $M^\pm_m=L_m^{\pm,(\epsilon)}\cdot L_m^{\pm,(-\epsilon)}$. Finally, let $\tilde L_m^{(\pm\epsilon)}$ denote the extension of $L_m^{\pm,(\pm\epsilon)}$ corresponding to $\bigl(\overline{\Sigma}_m^{\pm,(\pm\epsilon)}\bigr)^{G_m}$. We have $\tilde L_m^{\pm,(+)}\cap\tilde L_m^{\pm,(-)}=L^\pm_m$ and  
\[ \Gal\bigl(\tilde L_m^{\pm,(\pm)}/L^\pm_m\bigr)\simeq\Hom\!\Big(\bigl(\overline{\Sigma}_m^{\pm,(\pm)}\bigr)^{G_m},E_{p^m}\Big). \]
Furthermore, if $\tilde L^\pm_m:=\tilde L_m^{\pm,(+)}\cdot\tilde L_m^{\pm,(-)}$ then 
\[ \begin{split}
   \Gal\bigl(\tilde L^\pm_m/L^\pm_m\bigr)&\simeq\Hom\!\Big(\bigl(\overline{\Sigma}_m^{\pm,(+)}\bigr)^{G_m},E_{p^m}\Big)\oplus\Hom\!\Big(\bigl(\overline{\Sigma}_m^{\pm,(-)}\bigr)^{G_m},E_{p^m}\Big)\\&\simeq\Hom\!\Big(\bigl(\overline{\Sigma}^\pm_m\bigr)^{G_m},E_{p^m}\Big),
   \end{split} \]
where the second isomorphism follows by taking $G_m$-invariants in \eqref{eq-Sigma_m}. 
Since, by Lemma \ref{inj-selmer-lemma}, $\Sel^\pm_m$ injects via restriction into $\Sel_{p^{m+1}}\bigl(E/K_{m+1}(E_{p^{m+1}})\bigr)$, restriction induces an injection $\bigl(\overline{\Sigma}^\pm_m\bigr)^{G_m}\hookrightarrow\bigl(\overline{\Sigma}^\pm_{m+1}\bigr)^{G_{m+1}}$. It follows that for every $m\geq0$ there is a canonical projection 
\begin{equation} \label{cheb2}
\Gal\bigl(\tilde L^\pm_{m+1}/L^\pm_{m+1}\bigr)\longepi\Gal\bigl(\tilde L^\pm_m/L^\pm_m\bigr).
\end{equation} 
To introduce the last field extensions we need, we dualize the exact sequence
\[ (R^\pm_m)^{(\epsilon)}\oplus(R^\pm_m)^{(-\epsilon)}\xrightarrow{\vartheta^\pm_m}(\Sigma^\pm_m)^\vee\longrightarrow(\Sigma^\pm_m)^\vee/\mathrm{im}(\vartheta_m^\pm)\longrightarrow0 \] 
and get an isomorphism $\ker\bigl((\vartheta^\pm_m)^\vee\bigr)\simeq\bigl((\Sigma^\pm_m)^\vee/\mathrm{im}(\vartheta^\pm_m)\bigr)^{\!\vee}$. Moreover, dualizing 
\[ 0\longrightarrow\mathrm{im}(\vartheta^\pm_m)\longrightarrow(\Sigma^\pm_m)^\vee\longrightarrow\ker\bigl((\vartheta^\pm_m)^\vee\bigr)^\vee\longrightarrow0 \] 
gives a short exact sequence
\begin{equation} \label{dual-seq-eq}
0\longrightarrow\ker\bigl((\vartheta^\pm_m)^\vee\bigr)\longrightarrow\Sigma^\pm_m\xrightarrow{(\vartheta^\pm_m)^\vee}\mathrm{im}(\vartheta^\pm_m)^\vee\longrightarrow0.
\end{equation}
Finally, with maps $\vartheta^\pm$ and $p^\pm_m$ defined as in \eqref{theta-eq} and \eqref{p_m-eq}, write $U^\pm_m$ for the $\tilde R^\pm _m$-submodule of $\Sel^\pm_m$ such that there is an indentification 
\begin{equation} \label{image-eq}
\mathcal I^\pm_m:={\rm im}(p^\pm_m\circ\vartheta^\pm)=(\Sel^\pm_m/U^\pm_m)^{\!\vee}. 
\end{equation}
Namely, consider the short exact sequence
\[ 0\longrightarrow\mathcal I^\pm_m\longrightarrow(\Sel^\pm_m)^\vee\longrightarrow(\Sel^\pm_m)^\vee/\mathcal I^\pm_m\longrightarrow0. \]
Since $\mathcal I^\pm_m$ is compact, hence closed in $(\Sel^\pm_m)^\vee$, dualizing the sequence above gives 
\begin{equation} \label{duals-eq}
0\longrightarrow\bigl((\Sel^\pm_m)^\vee/\mathcal I^\pm_m\bigr)^{\!\vee}\longrightarrow\Sel^\pm_m\longrightarrow(\mathcal I^\pm_m)^\vee\longrightarrow0. 
\end{equation}
Now define $U^\pm_m:=\bigl((\Sel^\pm_m)^\vee/\mathcal I^\pm_m\bigr)^{\!\vee}$ and view $U^\pm_m$ as an $\tilde R^\pm_m$-submodule of $\Sel^\pm_m$ via \eqref{duals-eq}. Then there is a natural identification
\begin{equation} \label{quotient-eq}
\Sel^\pm_m/U^\pm_m=(\mathcal I^\pm_m)^\vee,
\end{equation}
and dualizing \eqref{quotient-eq} gives \eqref{image-eq}.

Write $\tilde M^\pm_m$ for the field cut out by $U^\pm_m$. As $p^\pm_m\circ\vartheta^\pm$ factors through $(R^\pm_m)^{(\epsilon)}\oplus(R^\pm_m)^{(-\epsilon)}$, there is a commutative diagram 
\[ \xymatrix{\Lambda^{(\epsilon)}\oplus\Lambda^{(-\epsilon)}\ar@{->>}[d]\ar[r]^-{\vartheta^\pm}&\mathcal X_\infty^\pm\ar[r]^-{p^\pm_m}&(\Sel^\pm_m)^\vee\ar@{->>}[d]\\(R^\pm_m)^{(\epsilon)}\oplus(R^\pm_m)^{(-\epsilon)}\ar[rr]^-{\vartheta^\pm_m}&&(\Sigma^\pm_m)^\vee} \] 
that induces a surjection $\mathcal I^\pm_m\twoheadrightarrow\mathrm{im}(\vartheta^\pm_m)$ and then, by duality, an injection $\mathrm{im}(\vartheta^\pm_m)^\vee\hookrightarrow(\mathcal I^\pm_m)^\vee$. From this we obtain a commutative diagram with exact rows
\begin{equation} \label{d22}
\xymatrix{0\ar[r]&\ker\bigl((\vartheta^\pm_m)^\vee\bigr)\ar[r]\ar@{^(->}[d]&\Sigma^\pm_m\ar[r]\ar@{^(->}[d]&\mathrm{im}(\vartheta^\pm_m)^\vee\ar[r]\ar@{^(->}[d]&0\\0\ar[r]&U^\pm_m\ar[r]&\Sel^\pm_m\ar[r]&(\mathcal I^\pm_m)^\vee\ar[r]&0}
\end{equation} 
whose upper row is \eqref{dual-seq-eq}. Denote by $L_m^{\pm,*}$ the field corresponding to $\Sigma^\pm_m$, so that $\tilde L^\pm_m\subset L_m^{\pm,*}$ by \eqref{S'-prop}. Observe that $\tilde M^\pm_m$ and $\tilde L^\pm_m$ are linearly disjoint over $L^\pm_m$. To check this, note that $\tilde M^\pm_m\cap\tilde L^\pm_m\subset\tilde M^\pm_m\cap L^{\pm,*}_m$ and that the second intersection corresponds to the subgroup $U^\pm_m\cap\Sigma^\pm_m$ inside $\Sel^\pm_m$. But diagram \eqref{d22} shows that $\ker\bigl((\vartheta^\pm_m)^\vee\bigr)=U^\pm_m\cap\Sigma^\pm_m$, hence $\tilde M^\pm_m\cap L^{\pm,*}_m=L^\pm_m$; we conclude that $\tilde M^\pm_m\cap\tilde L^\pm_m=L^\pm_m$. It follows that
\[ \Gal\bigl(\tilde M^\pm_m\cdot\tilde L^\pm_m/\tilde M^\pm_m\bigr)\simeq\Gal\bigl(\tilde L^\pm_m/\tilde M^\pm_m\cap\tilde L^\pm_m\bigr)\simeq\Gal\bigl(\tilde L^\pm_m/L^\pm_m\bigr), \] 
and then the inclusion $\tilde M^\pm_m\cdot\tilde L^\pm_m\subset M^\pm_m$ induces a surjection
\begin{equation} \label{gal-onto-eq}
\Gal\bigl(M^\pm_m/\tilde M^\pm_m\bigr)\longepi\Gal\bigl(\tilde L^\pm_m/L^\pm_m\bigr). 
\end{equation}
It follows that for every $m\geq0$ there is a commutative square of surjective maps 
\begin{equation} \label{diagram}
\xymatrix{\Gal\bigl(M^\pm_{m+1}/\tilde M^\pm_{m+1}\bigr)\ar@{->>}[r]\ar@{->>}[d]&\Gal\bigl(\tilde L^\pm_{m+1}/L^\pm_{m+1}\bigr)\ar@{->>}[d]\\\Gal\bigl(M^\pm_m/\tilde M^\pm_m\bigr)\ar@{->>}[r]&\Gal\bigl(\tilde L^\pm_m/L^\pm_m\bigr)}
\end{equation} 
where the horizontal arrows are given by \eqref{gal-onto-eq} and the right vertical arrow is given by \eqref{cheb2}. One easily checks the surjectivity of the left vertical map and the commutativity of \eqref{diagram}. 

\subsection{Families of Kolyvagin primes} \label{sec-families} 

The purpose of this subsection is to show that one can manufacture a Galois-compatible sequence ${(\ell^\pm_m)}_{m\geq1}$ of Kolyvagin primes. More precisely, our goal is to prove  

\begin{proposition}\label{prop-cheb}  
There is a sequence $\ell^\pm_\infty={(\ell^\pm_m)}_{m\geq1}$ of Kolyvagin primes for $p^m$ satisfying the following conditions: 
\begin{enumerate}
\item $\Frob_{\ell^\pm_m}=[\tau g^\pm_m]$ in $\Gal(M^\pm_m/\Q)$ with $g^\pm_m\in\Gal\bigl(M^\pm_m/K_m(E_{p^m})\bigr)$ such that 
\[ {(g^\pm_m)}_{m\geq1}\in\Gal\bigl(M^\pm_\infty/K_\infty(E_{p^\infty})\bigr); \] 
\item restriction induces an injective group homomorphism 
\[ \overline{\res}_{\ell^\pm_m}:\overline{\Sigma}^\pm_m\longmono E\bigl(K_{m,\ell^\pm_m}\bigr)\big/p^mE\bigl(K_{m,\ell^\pm_m}\bigr); \]
\item $p\nmid(\ell^\pm_m+1)^2-a_{\ell^\pm_m}^2$. 
\end{enumerate}
\end{proposition}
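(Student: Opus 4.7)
The plan is to construct each $\ell^\pm_m$ by Chebotarev's density theorem applied to the Galois extension $M^\pm_m\cdot\Q(E_{p^{m+1}})/\Q$, choosing a conjugacy class that encodes all three conditions, and then to arrange the sequence inductively so as to guarantee Galois compatibility in (1). The crucial input is the correspondence, essentially due to Kolyvagin, between restrictions of Selmer classes at inert primes $\ell$ (which split in $K_m$) and evaluations of the homomorphisms $\varphi_s\colon\Gal(M_s/K_m(E_{p^m}))\to E_{p^m}$ from \eqref{eq13}: if $\Frob_\ell=[\tau g]$ in $\Gal(M^\pm_m/\Q)$ then the local restriction of $s$ at a prime $\lambda\mid\ell$ of $K_m$ is computed (up to a $p$-adic unit, using local Tate duality together with the inertness of $\ell$) by $\varphi_s(g)$ projected to the appropriate $\pm$-eigenspace of $E_{p^m}$ for $\tau$.

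First I would select $g^\pm_m$ to ensure condition (2). Under \eqref{L_m-isom2-eq} and the splitting \eqref{eq-Sigma_m}, picking $g^\pm_m\in\Gal(M^\pm_m/L^\pm_m)$ amounts to prescribing a pair of homomorphisms $\overline{\Sigma}_m^{\pm,(\pm\epsilon)}\to E_{p^m}$. Each $\overline{\Sigma}_m^{\pm,(\pm\epsilon)}$ embeds into the cyclic $R^\pm_m$-module $(R^\pm_m)^{(\pm\epsilon)}$ by construction. Since $\rho_{E,p}$ is surjective (Assumption \ref{ass}(2)), the mod-$p^m$ representation is $\GL_2(\Z/p^m\Z)$ and $\tau$ acts on $E_{p^m}$ with eigenvalues $\pm1$, so each $\tau$-eigenspace of $E_{p^m}$ is a free cyclic $\Z/p^m\Z$-module of rank one. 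Choosing $g^\pm_m$ whose associated homomorphism injects each eigencomponent of $\overline{\Sigma}^\pm_m$ into the corresponding $\tau$-eigenspace of $E_{p^m}$ yields the injection $\overline{\res}_{\ell^\pm_m}$.

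Second, the Galois compatibility in (1) is secured via the surjective square \eqref{diagram}: one picks $(g^\pm_m)_m$ coherently using the left vertical surjection, producing an element of $\Gal(M^\pm_\infty/K_\infty(E_{p^\infty}))$. Finally, I would apply Chebotarev's density theorem to $\Gal(M^\pm_m\cdot\Q(E_{p^{m+1}})/\Q)$, picking a conjugacy class that (a) projects to $[\tau g^\pm_m]$ in $\Gal(M^\pm_m/\Q)$ and (b) has its image in $\Gal(\Q(E_{p^{m+1}})/\Q)$ in the $\tau$-coset of $\GL_2(\Z/p^{m+1}\Z)$ realizing the numerical divisibility in (3). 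Conditions (a) and (b) are independent because the intersection $M^\pm_m\cap\Q(E_{p^{m+1}})$ is controlled by $K_m(E_{p^m})$, over which the two extensions decouple thanks to Assumption \ref{ass}(2); hence the resulting conjugacy class is nonempty and has positive density. The inductive step from $m$ to $m+1$ is then enforced by selecting $\ell^\pm_{m+1}$ whose Frobenius in $\Gal(M^\pm_{m+1}/\Q)$ restricts to the already chosen class at level $m$.

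The main obstacle I expect is preserving condition (2) throughout the lifting: each newly chosen $g^\pm_{m+1}$ must not only lie over $g^\pm_m$ via \eqref{diagram} but also still produce an \emph{injection} $\overline{\Sigma}^\pm_{m+1}\hookrightarrow E_{p^{m+1}}$ rather than a map whose kernel grows with $m$. This requires careful tracking of how the surjective lifts in \eqref{diagram} interact with the canonical projection $E_{p^{m+1}}\twoheadrightarrow E_{p^m}$, and ultimately relies on the disjointness $\tilde M^\pm_m\cap\tilde L^\pm_m=L^\pm_m$ established after \eqref{d22} together with the full pro-$p$ surjectivity of $\rho_{E,p}$ afforded by Assumption \ref{ass}(2), which ensures the necessary freeness of the target eigenspaces of $E_{p^{m+1}}$.
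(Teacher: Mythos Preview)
Your overall architecture---choose $g^\pm_m$ via the Galois isomorphisms of \S\ref{galois-subsec}, lift compatibly through \eqref{diagram}, then apply Chebotarev in an extension large enough to encode condition (3)---matches the paper's. But there is a genuine gap in your argument for condition (2).

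You claim one can pick $g^\pm_m\in\Gal(M^\pm_m/L^\pm_m)$ so that the associated homomorphism $\overline{\Sigma}^\pm_m\to E_{p^m}$ is injective on each eigencomponent. This is impossible for cardinality reasons: $\overline{\Sigma}_m^{\pm,(\pm\epsilon)}$ is a submodule of $R^\pm_m$, and $|R^\pm_m|=p^{m\cdot\deg(\omega^\pm_m)}$ grows far faster than $|E_{p^m}^{\tau=\pm1}|=p^m$. A single evaluation $s\mapsto\varphi_s(g^\pm_m)$ can never inject the whole module into $E_{p^m}$ once $m$ is large. The map $\overline{\res}_{\ell^\pm_m}$ you want to control is the \emph{sum over all $p^m$ primes of $K_m$ above $\ell^\pm_m$}, and its injectivity is not the same as injectivity of the evaluation at one Frobenius.

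The paper's fix is to work not with $\Gal(M^\pm_m/L^\pm_m)\simeq\Hom(\overline{\Sigma}^\pm_m,E_{p^m})$ but with the much smaller quotient $\Gal(\tilde L^\pm_m/L^\pm_m)\simeq\Hom\bigl((\overline{\Sigma}^\pm_m)^{G_m},E_{p^m}\bigr)$. Since $(\overline{\Sigma}_m^{\pm,(\pm\epsilon)})^{G_m}$ is cyclic of order at most $p^m$, one \emph{can} choose $h^\pm_m$ there so that the evaluation is injective on $G_m$-invariants; moreover, these choices can be made compatibly via the surjections \eqref{cheb2}, and then lifted to $g^\pm_m$ through \eqref{diagram}. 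The passage from ``injective on $(\overline{\Sigma}^\pm_m)^{G_m}$'' to ``$\overline{\res}_{\ell^\pm_m}$ injective on all of $\overline{\Sigma}^\pm_m$'' is a separate $p$-group argument: if a nonzero $s$ restricted to zero, then so would all of $R^\pm_m s$, hence $(R^\pm_m s)^{G_m}$; but a nonzero $\mathbb F_p[G_m]$-module has nonzero $G_m$-invariants, contradicting the injectivity already secured on invariants. Your proposal never isolates this reduction to invariants, and the ``obstacle'' you flag at the end (preserving injectivity under lifting) is secondary to the fact that the injectivity you ask for at a fixed level is unattainable by your method.
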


\begin{proof} Notation being as in \S \ref{sec-complex-conj} and \S \ref{galois-subsec}, for each choice of sign $\pm$ pick $h_m^{(\pm)}=h_m^{\pm,(\pm)}\in\Gal\bigl(\tilde L^{\pm,(\pm)}_m/L^\pm_m\bigr)$ such that the period of $\bigl(h^{(\pm)}_m\bigr)^\tau h_m^{(\pm)}$ is $p^{m^{(\pm)}}=p^{m^{\pm,(\pm)}}$. To see the existence of an element with this property, observe that if $h_m^{(\pm)}$ corresponds to the homomorphism $\phi:\bigl(\overline{\Sigma}^{\pm,(\pm)}_m\bigr)^{G_m}\rightarrow E_{p^m}$ then $\bigl(h^{(\pm)}_m\bigr)^\tau h_m^{(\pm)}$ corresponds to $x\mapsto \pm\tau\phi(x)+\phi(x)$. In light of this, to show the existence of such an $h_m^{(\pm)}$ it suffices to choose a $\phi$ that takes a generator of $\bigl(\overline{\Sigma}_m^{\pm,(\pm)}\bigr)^{G_m}$ to an element of order $p^{m^{(\pm)}}$ in $E_{p^m}$. Define $h^\pm_m:=\bigl(h_m^{(+)},h_m^{(-)}\bigr)\in\Gal\bigl(\tilde L^\pm_m/L^\pm_m\bigr)$ and choose the sequence ${(h^\pm_m)}_{m\geq1}$ so that the image of $h^\pm_{m+1}$ via surjection \eqref{cheb2} is $h^\pm_m$. Using diagram \eqref{diagram}, select also a compatible sequence of elements $g^\pm_m\in\Gal\bigl(M^\pm_m/K_m(E_{p^m})\bigr)$ such that the image of $g^\pm_m$ in $\Gal\bigl(\tilde L^\pm_m/L^\pm_m\bigr)$ is $h^\pm_m$. For every integer $m\geq1$ choose a prime number $\ell^\pm_m$ such that 
\begin{equation} \label{choice-of-primes}
\Frob_{\ell^\pm_m}=[\tau g^\pm_m]\quad\text{in $\Gal(M^\pm_m/\Q)$}.
\end{equation}
Clearly, $\ell^\pm_m$ is a Kolyvagin prime and the required compatibility conditions are fulfilled by construction, so (1) is satisfied. To check (2), we must show that the restriction is injective. For this, fix a prime $\mathfrak l^\pm_m$ of $M^\pm_m$ above $\ell^\pm_m$ satisfying $\Frob_{\mathfrak l^\pm_m/\ell^\pm_m}=\tau g^\pm_m$. Then the restriction of $\Frob_{\mathfrak l^\pm_m/\ell^\pm_m}$ to $\Gal(\tilde L^\pm_m/L^\pm_m)$ corresponds to an injective homomorphism 
\[ \phi_{\mathfrak l^\pm_m/\ell^\pm_m}:\bigl(\overline\Sigma^\pm_m\bigr)^{G_m}\longmono E_{p^m} \]
consisting in the evaluation at $\Frob_{\mathfrak l^\pm_m/\ell^\pm_m}$; namely, one has 
\[ \phi_{\mathfrak l^\pm_m/\ell^\pm_m}(s)=s\bigl(\Frob_{\mathfrak l^\pm_m/\ell^\pm_m}\bigr) \] 
for all $s\in\bigl(\overline\Sigma^\pm_m\bigr)^{G_m}$. The choice of $\mathfrak l^\pm_m$ determines a prime $\tilde\lambda^\pm_m$ of $K_m$ above $\ell^\pm_m$, and the completion of $K_m$ at $\tilde\lambda^\pm_m$ is isomorphic to the completion $K_{\lambda^\pm_m}$ of $K$ at the unique prime $\lambda^\pm_m$ of $K$ above $\ell^\pm_m$. It follows that the canonical restriction map 
\begin{equation} \label{r}
\bigl(\overline\Sigma^\pm_m\bigr)^{G_m}\longmono E\bigl(K_{\lambda^\pm_m}\bigr)\big/p^mE\bigl(K_{\lambda^\pm_m}\bigr)
\end{equation}
is injective, since the same is true of its composition with the local Kummer map and the evaluation at Frobenius. Suppose now that $s\in\overline{\Sigma}^\pm_m$ is non-zero and $\overline{\res}_{\ell^\pm_m}(s)=0$. In particular, the submodule $( R^\pm _ms)^{G_m}$ of $\bigl(\overline{\Sigma}^\pm_m\bigr)^{G_m}$ is sent to $0$, via \eqref{r}, in the direct summand $E\bigl(K_{\lambda^\pm_m}\bigr)\big/p^mE\bigl(K_{\lambda^\pm_m}\bigr)$ of $E\bigl(K_{m,\ell^\pm_m}\bigr)\big/p^mE\bigl(K_{m,\ell^\pm_m}\bigr)$ corresponding to $\tilde\lambda^\pm_m$. Up to multiplying $s$ by a suitable power of $p$, we may assume that $s$ is $p$-torsion. Now $ R^\pm _ms$ is a non-trivial $\Z/p\Z$-vector space on which the $p$-group $G_m$ acts. By \cite[Proposition 26]{Serre}, the submodule $( R^\pm _ms)^{G_m}$ is non-trivial, and this contradicts the injectivity of \eqref{r}. Summing up, we have proved that all choices of a sequence $\ell^\pm_\infty={(\ell^\pm_m)}_{m\geq1}$ satisfying \eqref{choice-of-primes} enjoy properties (1) and (2) in the statement of the proposition. 

The finer choice of a sequence $\ell^\pm_\infty$ satisfying (3) as well can be made by arguing as in the proof of \cite[Proposition 12.2, (3)]{Nek}; see the proof of \cite[Proposition 3.26]{LV} for details. \end{proof}

\subsection{Local duality} \label{duality-sec}

The aim of this subsection is to bound the $\Lambda$-rank of $\Lambda x\oplus\Lambda y$ by a $\Lambda$-module $V(\ell^\pm_\infty)$ that surjects onto $\Lambda x\oplus\Lambda y$; as notation suggests, $V(\ell^\pm_\infty)$ depends on the choice of a compatible family $\ell^\pm_\infty={(\ell^\pm_m)}_{m\geq1}$ of Kolyvagin primes as in \S \ref{sec-families}.  

By \cite[Ch. I, Corollary 3.4]{Milne-ADT}, if $F$ is a finite extension of $\Q_p$ then the Tate pairing induces a perfect pairing 
\[ {\langle\cdot,\cdot\rangle}_F:H^1(F,E)_{p^m}\times E(F)/p^mE(F)\longrightarrow \Z/p^m\Z \]
that gives rise to a $\tau$-antiequivariant isomorphism   
\[ \delta_{F}:H^1(F,E)_{p^m}\overset\simeq\longrightarrow\bigl(E(F)/p^mE(F)\bigr)^{\!\vee}. \] 
If $F$ is a number field and $v$ is a finite place of $F$ then we also denote ${\langle\cdot,\cdot\rangle}_{F_v}$ by ${\langle\cdot,\cdot\rangle}_{F,v}$. 

Now let $\ell$ be a Kolyvagin prime for $p^m$ and write $\delta_{m,\lambda}$ as a shorthand for $\delta_{K_m,\lambda}$, where $\lambda$ is a prime of $K_m$ dividing $\ell$. Taking the direct sum of the maps $\delta_{m,\lambda}$ over all the primes $\lambda\,|\,\ell$, we get a $\tau$-antiequivariant isomorphism 
\begin{equation} \label{delta-ell}
\delta_{m,\ell}:H^1(K_{m,\ell},E{)}_{p^m}\overset{\simeq}\longrightarrow\bigl(E(K_{m,\ell})/p^mE(K_{m,\ell})\bigr)^\vee.
\end{equation}
Composing $\delta_{m,\ell}$ with the dual of the restriction $\res_{m,\ell}$ defined in \eqref{restriction-2} and the dual of the inclusion $\Sel^\pm_m\subset\Sel_{p^m}(E/K_m)$, we get a map 
\begin{equation} \label{def-V(ell)}
H^1(K_{m,\ell},E{)}_{p^m}\xrightarrow{\delta_{m,\ell}}\bigl(E(K_{m,\ell})/p^mE(K_{m,\ell})\bigr)^\vee\xrightarrow{\res_{m,\ell}^\vee}\Sel_{p^m}(E/K_m)^\vee\longepi(\Sel^\pm_m)^\vee 
\end{equation}
whose image we denote by $V^\pm_m(\ell)$. By construction, $V^\pm_m(\ell)$ is an $ R^\pm _m$-submodule of $(\Sel^\pm_m)^\vee$. 

\begin{proposition} \label{lemma4.4}
Let $\ell^\pm_\infty$ be a sequence of Kolyvagin primes as in Proposition \ref{prop-cheb}.
\begin{enumerate} 
\item For every $m\geq1$ there is a canonical surjection $V^\pm_m(\ell^\pm_m)\twoheadrightarrow W_m^{\pm,(\epsilon)}\oplus W_m^{\pm,(-\epsilon)}$. 
\item For every $m\geq1$ there is a canonical surjection $V^\pm_{m+1}(\ell^\pm_{m+1})\twoheadrightarrow V^\pm_m(\ell^\pm_m)$. 
\end{enumerate}
\end{proposition}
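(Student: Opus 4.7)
The plan is to derive (1) from the injectivity statement of Proposition \ref{prop-cheb}(2) by a formal duality chase, and then to establish (2) using the coherence of the Frobenii provided by Proposition \ref{prop-cheb}(1).

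For (1), set $A_m := E(K_{m,\ell_m^\pm})/p^mE(K_{m,\ell_m^\pm})$ and write $r_m$ for the restriction of $\res_{m,\ell_m^\pm}$ to $\Sel_m^\pm$. Since the local Tate pairing isomorphism $\delta_{m,\ell_m^\pm}$ is invertible, $V_m^\pm(\ell_m^\pm)$ is precisely the image in $(\Sel_m^\pm)^\vee$ of $A_m^\vee$ under $r_m^\vee$. By Proposition \ref{prop-cheb}(2), the composition $\Sigma_m^\pm \hookrightarrow \Sel_m^\pm \xrightarrow{r_m} A_m$ factors as $\Sigma_m^\pm \twoheadrightarrow \overline{\Sigma}_m^\pm \hookrightarrow A_m$, with the second arrow being $\overline{\res}_{\ell_m^\pm}$. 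Dualizing and combining with the surjection $(\Sel_m^\pm)^\vee \twoheadrightarrow (\Sigma_m^\pm)^\vee$ dual to the inclusion $\Sigma_m^\pm \hookrightarrow \Sel_m^\pm$, one checks that $V_m^\pm(\ell_m^\pm)$ surjects via this quotient onto the image of $A_m^\vee$ in $(\Sigma_m^\pm)^\vee$, which is exactly $(\overline{\Sigma}_m^\pm)^\vee$.

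To identify the target with $W_m^{\pm,(\epsilon)} \oplus W_m^{\pm,(-\epsilon)}$, I use that $\overline{\Sigma}_m^\pm = \Sigma_m^\pm/\ker\bigl((\vartheta_m^\pm)^\vee\bigr)$, so Pontryagin duality gives $(\overline{\Sigma}_m^\pm)^\vee = \mathrm{im}(\vartheta_m^\pm) \subset (\Sigma_m^\pm)^\vee$. Since $\vartheta_m^\pm$ is the factorization of $p_m^\pm \circ \vartheta^\pm$ through $(R_m^\pm)^{(\epsilon)} \oplus (R_m^\pm)^{(-\epsilon)}$, its image is the sum of the images of the two direct summands, namely $W_m^{\pm,(\epsilon)}$ and $W_m^{\pm,(-\epsilon)}$; this sum is direct because $Z_m^\pm$ was defined to be exactly the intersection of the lifts of these pieces in $\mathcal{Z}_m^\pm/\omega_m^\pm \mathcal{Z}_m^\pm$. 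This concludes (1).

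For part (2), the strategy is to leverage Proposition \ref{prop-cheb}(1): the Frobenius classes $\Frob_{\ell_{m+1}^\pm}$ and $\Frob_{\ell_m^\pm}$ are coherent under the canonical projection $\Gal(M_{m+1}^\pm/K_{m+1}(E_{p^{m+1}})) \twoheadrightarrow \Gal(M_m^\pm/K_m(E_{p^m}))$. By Lemma \ref{local-splitting-lemma}(2), the groups $H^1(K_{m+1,\ell_{m+1}^\pm},E)_{p^{m+1}}$ and $H^1(K_{m,\ell_m^\pm},E)_{p^m}$ decompose as free modules of rank two over $R_{m+1}$ and $R_m$ respectively; the coherence of the chosen Frobenii then furnishes a natural projection between them that is compatible with the canonical surjection $(\Sel_{m+1}^\pm)^\vee \twoheadrightarrow (\Sel_m^\pm)^\vee$ dual to the injection $\Sel_m^\pm \hookrightarrow \Sel_{m+1}^\pm$ from Lemma \ref{inj-selmer-lemma}(1). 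Passing to images under the duals of the restriction maps then yields the required surjection $V_{m+1}^\pm(\ell_{m+1}^\pm) \twoheadrightarrow V_m^\pm(\ell_m^\pm)$. The main obstacle lies here: the rational primes underlying $\ell_m^\pm$ and $\ell_{m+1}^\pm$ are a priori distinct, so there is no immediate map between the local cohomologies at these two primes, and the coherence of Frobenii built into Proposition \ref{prop-cheb} is exactly what is needed to recast the local terms via evaluation-at-Frobenius maps into a common framework in which a canonical projection becomes meaningful.
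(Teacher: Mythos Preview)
Your argument for part (1) is correct and matches the paper's: both dualize the injection $\overline{\res}_{\ell_m^\pm}$ of Proposition \ref{prop-cheb}(2), observe that the resulting surjection $H^1(K_{m,\ell_m^\pm},E)_{p^m}\twoheadrightarrow(\overline{\Sigma}_m^\pm)^\vee$ factors through $V_m^\pm(\ell_m^\pm)$, and identify $(\overline{\Sigma}_m^\pm)^\vee$ with $\mathrm{im}(\vartheta_m^\pm)=W_m^{\pm,(\epsilon)}\oplus W_m^{\pm,(-\epsilon)}$.

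For part (2), you have the right architecture and you correctly isolate the genuine difficulty: the underlying primes $\ell_m^\pm$ and $\ell_{m+1}^\pm$ are different, so one must use the $R$-module identifications of Lemma \ref{local-splitting-lemma}(2) to even speak of a map between the local cohomology groups. The paper proceeds exactly this way, writing down the square
\[
\xymatrix{R_{m+1}^{(+)}\oplus R_{m+1}^{(-)}\simeq H^1\bigl(K_{m+1,\ell^\pm_{m+1}},E\bigr)_{p^{m+1}}\ar@{->>}[r]\ar@{->>}[d]&(\Sel^\pm_{m+1})^\vee\ar@{->>}[d]^{\res^\vee}\\R_m^{(+)}\oplus R_m^{(-)}\simeq H^1\bigl(K_{m,\ell^\pm_m},E\bigr)_{p^m}\ar@{->>}[r]&(\Sel^\pm_m)^\vee}
\]
with the left vertical map the obvious projection of group rings. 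The gap in your write-up is that you \emph{assert} this square commutes (``compatible with the canonical surjection'') but do not verify it; the coherence of the $g_m^\pm$ from Proposition \ref{prop-cheb}(1) is necessary to make the $R$-module identifications compatible, but it is not by itself the commutativity check. The paper closes this gap by invoking the transfer formula (Brown, Ch.~V, (3.8)), which governs how the local Tate pairings at levels $m$ and $m+1$ interact with $\res^\vee$. Once commutativity is established, the dashed surjection $V_{m+1}^\pm(\ell_{m+1}^\pm)\twoheadrightarrow V_m^\pm(\ell_m^\pm)$ drops out by a diagram chase, as you say. So your outline is sound; what is missing is precisely the citation (or reproof) of the transfer formula at the point where you pass from ``the Frobenii are coherent'' to ``the diagram commutes.''
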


\begin{proof}  Fix an integer $m\geq1$. Composing the isomorphism $\delta_{m,\ell^\pm_m}$ in \eqref{delta-ell} with the dual of the map $\overline{\res}_{\ell^\pm_m}$ introduced in part (2) of Proposition \ref{prop-cheb}, we get a surjection 
\[ H^1\bigl(K_{m,\ell^\pm_m},E\bigr)_{p^m}\xrightarrow{\delta_{m,\ell^\pm_m}}\Big(E\bigl(K_{m,\ell^\pm_m}\bigr)\big/p^mE\bigl(K_{m,\ell^\pm_m}\bigr)\Big)^{\!\vee}\xrightarrow{\overline{\res}_{m,\ell^\pm_m}^\vee}\bigl(\overline{\Sigma}^\pm_m\bigr)^\vee \] 
that, by definition, factors through $V^\pm_m(\ell^\pm_m)$. Now $\bigl(\overline{\Sigma}^\pm_m\bigr)^\vee\simeq{\rm im}(\vartheta^\pm_m)$, which is isomorphic to $W_m^{\pm,(\epsilon)}\oplus W_m^{\pm,(-\epsilon)}$. Therefore we get an $\tilde  R^\pm _m$-equivariant surjection 
\[ V^\pm_m(\ell^\pm_m)\longepi W_m^{\pm,(\epsilon)}\oplus W_m^{\pm,(-\epsilon)}, \]
which proves part (1). 

As for part (2), let us define the map $V^\pm_{m+1}(\ell^\pm_{m+1})\rightarrow V^\pm_m(\ell^\pm_m)$. Consider the diagram 
\[ \xymatrix{R_{m+1}^{(+)}\oplus R_{m+1}^{(-)}\simeq H^1\bigl(K_{m+1,\ell^\pm_{m+1}},E{\bigr)}_{p^m}\ar@{->>}[r]\ar@{->>}[d]&V^\pm_{m+1}(\ell^\pm_{m+1})\ar@{-->}[d]\ar@{^(->}[r]& 
(\Sel^\pm_{m+1})^\vee\ar@{->>}[d]^{\res^\vee}
\\
R_m^{(+)}\oplus R_m^{(-)}\simeq H^1\bigl(K_{m,\ell^\pm_m},E{\bigr)}_{p^m}\ar@{->>}[r]&V^\pm_m(\ell^\pm_m)\ar@{^(->}[r]& 
(\Sel^\pm_m)^\vee
} \]
in which the left vertical surjection is a consequence of part (2) of Lemma \ref{local-splitting-lemma} and the right vertical arrow is surjective because $\Sel^\pm_m$ injects into $\Sel^\pm_{m+1}$ via the restriction map denoted by ``res'' (see Lemma \ref{inj-selmer-lemma} and \S \ref{control-subsec}). This diagram is commutative by the transfer formula (see, e.g., \cite[Ch. V, (3.8)]{Brown}). In light of this, an easy diagram chasing shows the existence of the desired (dashed) surjective homomorphism. \end{proof}

In the rest of the paper, let $\ell^\pm_\infty={(\ell^\pm_m)}_{m\geq1}$ denote a sequence of Kolyvagin primes as in Proposition \ref{prop-cheb}. It follows from part (2) of Proposition \ref{lemma4.4} that we can define the $\Lambda$-module 
\[ V^\pm(\ell^\pm_\infty):=\invlim_mV^\pm_m(\ell^\pm_m). \] 

\begin{proposition} \label{prop4.5}
There is a surjection 
\[ V^\pm(\ell^\pm_\infty)\longepi\Lambda x\oplus\Lambda y. \]
\end{proposition}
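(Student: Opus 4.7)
The plan is to take inverse limits of the surjections produced by Proposition \ref{lemma4.4} and then identify the resulting target with $\Lambda x\oplus\Lambda y$.

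First, I would verify that the surjections $V^\pm_m(\ell^\pm_m)\twoheadrightarrow W_m^{\pm,(\epsilon)}\oplus W_m^{\pm,(-\epsilon)}$ of Proposition \ref{lemma4.4}(1) are compatible with the transition maps $V^\pm_{m+1}(\ell^\pm_{m+1})\twoheadrightarrow V^\pm_m(\ell^\pm_m)$ of Proposition \ref{lemma4.4}(2), where the targets $\{W_m^{\pm,(\pm\epsilon)}\}_m$ form an inverse system under the natural projections induced by the inclusion $\omega_{m+1}^\pm\Lambda\subset\omega_m^\pm\Lambda$. This compatibility is built into the construction, since both towers arise from dualizing the restriction $\Sel^\pm_m\hookrightarrow\Sel^\pm_{m+1}$, and the maps $p^\pm_m\circ\vartheta^\pm$ form a compatible family by the very definition of $p^\pm_m$. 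As all modules in play are finite (being subquotients of the finite groups $H^1(K_{m,\ell^\pm_m},E)_{p^m}$ and $\mathcal Z^\pm_m/\omega_m^\pm\mathcal Z^\pm_m$), the Mittag--Leffler condition is automatic, and passing to the inverse limit yields a surjection
\[ V^\pm(\ell^\pm_\infty)\longepi\invlim_m\bigl(W_m^{\pm,(\epsilon)}\oplus W_m^{\pm,(-\epsilon)}\bigr). \]

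Next, I would identify the right-hand side with $\Lambda x\oplus\Lambda y$. By construction $p^\pm_m\circ\vartheta^\pm$ factors through $\Lambda x\oplus\Lambda y\subset\mathcal X_\infty^\pm$, giving compatible surjections $\Lambda x\twoheadrightarrow W_m^{\pm,(\epsilon)}$ and $\Lambda y\twoheadrightarrow W_m^{\pm,(-\epsilon)}$, which assemble into a natural surjection
\[ \Phi\colon\Lambda x\oplus\Lambda y\longepi\invlim_m\bigl(W_m^{\pm,(\epsilon)}\oplus W_m^{\pm,(-\epsilon)}\bigr). \]
Once $\Phi$ is known to be an isomorphism, composing $\Phi^{-1}$ with the surjection from the first step produces the statement.

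The main obstacle is proving injectivity of $\Phi$. An element $\lambda x\in\Lambda x$ lies in $\ker(\Lambda x\to W_m^{\pm,(\epsilon)})$ precisely when $p^\pm_m(\lambda x)=p^\pm_m(\mu y)$ for some $\mu\in\Lambda$, i.e., $\lambda x-\mu y\in\ker(p^\pm_m)$. Applying $\pi^\pm$ (which kills $y$) and using that $\pi^\pm$ is injective on $\Lambda x$ (since $\ker(\pi^\pm)\cap\Lambda x=0$), this forces $\lambda\pi^\pm(x)$ to become divisible by $\omega_m^\pm$ inside $\mathcal H_\infty^\pm$, up to the finite error coming from the cokernel of the control map in \eqref{coro-CT}, which is bounded independently of $m$. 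Since $\mathcal H_\infty^\pm$ is torsion-free of rank $1$ over $\Lambda$ by Proposition \ref{H-rank-prop} and $\bigcap_m\omega_m^\pm\Lambda=0$ inside $\Lambda\simeq\Z_p[\![X]\!]$, intersecting over $m$ forces $\lambda\pi^\pm(x)=0$, whence $\lambda=0$. The analogous argument handles the second summand: an element of $\Lambda y$ lying in $\omega_m^\pm\mathcal X_\infty^\pm$ for every $m$ must vanish by the Krull intersection theorem applied to the finitely generated $\Lambda$-module $\mathcal X_\infty^\pm$ (the ideals $\omega_m^\pm\Lambda$ lie in arbitrarily high powers of the maximal ideal of $\Lambda$).
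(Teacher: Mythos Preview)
Your first step---passing to the inverse limit via Mittag--Leffler to obtain a surjection $V^\pm(\ell^\pm_\infty)\twoheadrightarrow\invlim_m\bigl(W_m^{\pm,(\epsilon)}\oplus W_m^{\pm,(-\epsilon)}\bigr)$---is exactly what the paper does.

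For the identification of that inverse limit with $\Lambda x\oplus\Lambda y$, the paper takes a shorter route. Rather than analysing $\ker\Phi$ componentwise, it invokes the short exact sequence
\[
0\longrightarrow Z^\pm_m\longrightarrow\mathcal I^\pm_m\longrightarrow W_m^{\pm,(\epsilon)}\oplus W_m^{\pm,(-\epsilon)}\longrightarrow0
\]
(where $\mathcal I^\pm_m=p^\pm_m(\Lambda x+\Lambda y)$), passes to inverse limits, and kills the left-hand term by observing $\invlim_m Z^\pm_m\subset\Lambda x\cap\Lambda y=\{0\}$. This uses the already-established disjointness once, symmetrically, with no separate treatment of the two summands.

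Your injectivity argument has the right shape but two soft spots. For the $\Lambda x$-component, the phrase ``divisible by $\omega_m^\pm$ up to finite error'' is not quite what you get: from $\lambda x-\mu_m y\in\ker(p^\pm_m)$ one deduces that $\lambda\pi^\pm(x)$ vanishes in $(\E_m^\pm)^\vee$ for every $m$, and intersecting these kernels over $m$ already gives $0$ since $\mathcal H_\infty^\pm=\invlim_m(\E_m^\pm)^\vee$; no $\omega_m^\pm$-divisibility statement is needed or available. More seriously, for the $\Lambda y$-component your assertion that the relevant element lies in $\omega_m^\pm\mathcal X_\infty^\pm$ for every $m$ is unjustified: belonging to $\ker(p^\pm_m)$ is strictly weaker, because $p^\pm_m$ is the composite of \emph{three} surjections (reduction mod $\omega_m^\pm$, the control map \eqref{coro-CT}, and the passage $\mathcal X_m^\pm\to\mathcal Z_m^\pm$), and the symmetric trick via $\pi^\pm$ fails here since $\pi^\pm(y)=0$. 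One can rescue this with a compactness argument---choose the auxiliary elements $\lambda_m x$ compatibly and then invoke $\Lambda x\cap\Lambda y=\{0\}$---but at that point you have essentially reproduced the paper's exact-sequence approach.
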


\begin{proof} Taking the inverse limit of the maps in part (1) of Proposition \ref{lemma4.4} gives a surjection 
\begin{equation} \label{eq-surj}
V^\pm(\ell^\pm_\infty)\longepi\invlim_m\bigl(W_m^{\pm,(\epsilon)}\oplus W_m^{\pm,(-\epsilon)}\bigr),
\end{equation} 
where we use the fact that the projective system satisfies the Mittag--Leffler condition, as all the modules involved are finite. On the other hand, with $\mathcal I^\pm_m$ as in \eqref{image-eq}, there is a short exact sequence 
\[ 0\longrightarrow Z^\pm_m\longrightarrow \mathcal I^\pm_m\longrightarrow W_m^{\pm,(\epsilon)}\oplus W_m^{\pm,(-\epsilon)}\longrightarrow 0, \] 
and passing to inverse limits shows that there is a short exact sequence 
\[ 0\longrightarrow\invlim_mZ^\pm_m\longrightarrow\Lambda x\oplus \Lambda y\longrightarrow\invlim_m\bigl(W_m^{\pm,(\epsilon)}\oplus W_m^{\pm,(-\epsilon)}\bigr)\longrightarrow0. \]
Since $\Lambda x\cap\Lambda y=\{0\}$ and $\sideset{}{_m}\invlim Z^\pm_m\subset\Lambda x\cap\Lambda y$, we have $\sideset{}{_m}\invlim Z^\pm_m=0$, therefore $\Lambda x\oplus \Lambda y$ is isomorphic to $\sideset{}{_m}\invlim\bigl(W_m^{\pm,(\epsilon)}\oplus W_m^{\pm,(-\epsilon)}\bigr)$. Combining this with \eqref{eq-surj} gives the result. \end{proof}

\subsection{Kolyvagin classes} \label{kolyvagin-subsec}

We briefly review the construction of Kolyvagin classes attached to Heegner points. 

Let $m\geq0$ be an integer and let $\ell$ be a Kolyvagin prime for $p^m$; in particular, $p^m\,|\,\ell+1$ and $p^m\,|\,a_\ell$. Assume also that $p^{m+1}\nmid \ell+1\pm a_\ell$. Let $H_\ell$ be the ring class field of $K$ of conductor $\ell$. The fields $K_m$ and $H_\ell$ are linearly disjoint over the Hilbert class field $H_1$ of $K$ and $\Gal(H_\ell/H_1)$ is cyclic of order $\ell+1$. Let $H_{m,\ell}^{(p)}$ be the maximal subextension of the composite $K_mH_\ell $ having $p$-power degree over $K_m$ and set $\mathfrak G_\ell:=\Gal\bigl(H_{m,\ell}^{(p)}/K_m\bigr)$. By class field theory, if $n_\ell:=\ord_p(\ell+1)$ then $\mathfrak G_\ell\simeq\Z/p^{n_\ell}\Z$; in particular, $m\,|\,n_\ell$.

As in \cite[\S 3]{Gross} and \cite[\S 2.1]{BD96}, we can define a Heegner point $x_{\ell p^m}\in X_0^D(M)(H_{\ell p^m})$ and, with $\pi_E$ as in \eqref{parametrization}, set $y_{\ell p^m}:=\pi_E(x_{\ell p^m})\in E(H_{\ell p^m})$. Let the integer $d(m)$ be as in \eqref{d(m)-eq}, then take the Galois trace 
\[ \alpha_m(\ell):=\tr_{H_{\ell p^{d(m)}}/H_{m,\ell}^{(p)}}(y_{\ell p^{d(m)}})\in E\bigl(H_{m,\ell}^{(p)}\bigr). \] 
Now fix a generator $\sigma_\ell$ of $\mathfrak G_\ell$ and consider the Kolyvagin derivative operator 
\[ {\bf D}_\ell:=\sum_{i=1}^{p^{n_\ell}-1}i\sigma_\ell^i\in\Z/p^m\Z[\mathfrak G_\ell]. \] 
One has $(\sigma_\ell-1){\bf D}_\ell=-\tr_{H_{m,\ell}^{(p)}/K_m}$, hence 
\[ \bigl[{\bf D}_\ell\bigl(\alpha_m(\ell)\bigr)\bigr]\in\Big(E\bigl(H_{m,\ell}^{(p)}\bigr)\big/{p^m}E\bigl(H_{m,\ell}^{(p)}\bigr)\Big)^{\!\mathfrak G_\ell}, \]
where $[\star]$ denotes the class of an element $\star$ in the relevant quotient group. Now observe that, thanks to condition (2) in Assumption \ref{ass}, $E_p\bigl(H_{m,\ell}^{(p)}\bigr)$ is trivial (cf. \cite[Lemma 4.3]{Gross}). Taking $\mathfrak G_\ell$-cohomology of the $p^m$-multiplication map on $E\bigl(H_{m,\ell}^{(p)}\bigr)$ gives a short exact sequence
\[ 0\longrightarrow E(K_m)/p^mE(K_m)\longrightarrow\Big(E\bigl(H_{m,\ell}^{(p)}\bigr)\big/{p^m}E\bigl(H_{m,\ell}^{(p)}\bigr)\Big)^{\!\mathfrak G_\ell}\longrightarrow H^1\bigl(\mathfrak G_\ell,E(H_{m,\ell}^{(p)})\bigr)_{p^m}\longrightarrow0. \] 
Composing the arrow above with the inflation map gives a map
\begin{equation} \label{res-isom-kol-eq}
\Big(E\big(H_{m,\ell}^{(p)}\big)\big/{p^m}E\big(H_{m,\ell}^{(p)}\big)\Big)^{\!\mathfrak G_\ell}\longrightarrow H^1\bigl(\mathfrak G_\ell,E(H_{m,\ell}^{(p)})\bigr)_{p^m}\longrightarrow H^1(K_m,E)_{p^m}.
\end{equation}

\begin{definition}
The \emph{Kolyvagin class} $d_m(\ell)\in H^1(K_m,E)_{p^m}$ is the class corresponding to $[{\bf D}_\ell(\alpha_m(\ell))]$ under the map \eqref{res-isom-kol-eq}.
\end{definition}

For any Kolyvagin prime $\ell$ for $p^m$ fix a $\tau$-antiequivariant isomorphism of $R_m$-modules 
\begin{equation} \label{phi-eq}
\phi_{m,\ell}:H^1(K_{m,\ell},E)_{p^m}\overset\simeq\longrightarrow E(K_{m,\ell})/p^mE(K_{m,\ell})
\end{equation} 
as in \cite[\S 1.4, Proposition 2]{Ber1}. If $v$ is a place of a number field $F$ and $c\in H^1(F,M)$ for a $G_F$-module $M$, we write $\res_v(c)$ for the restriction (or localization) of $c$ at $v$; if $q$ is a prime number, we write $\res_q(c)$ for the sum of the localizations at the primes of $F$ above $q$. 

For the next result, recall from \S \ref{heegner-trace-subsec} that $z_m:=\tr_{H_{p^{d(m)}}/K_m}\bigl(y_{p^{d(m)}}\bigr)\in E(K_m)$. 

\begin{proposition} \label{prop-d-ell} 
The class $d_m(\ell)$ enjoys the following properties: 
\begin{enumerate}
\item if $v$ is a (finite or infinite) prime of $K_m$ not dividing $\ell$ then $\res_v(d_m(\ell))$ is trivial; 
\item $\phi_{m,\ell}\bigl(\res_\ell(d_m(\ell))\bigr)=[\res_\ell(z_m)]$.  
\end{enumerate}
\end{proposition}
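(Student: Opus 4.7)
The plan is to adapt the Euler system computation of Kolyvagin (as presented for modular curves in \cite{Gross} and \cite[\S 1.4]{Ber1}) to our quaternionic anticyclotomic setting. The main inputs are the Euler system norm relation
\[ \tr_{H_{\ell p^{d(m)}}/H_{p^{d(m)}}}(y_{\ell p^{d(m)}})=a_\ell\cdot y_{p^{d(m)}}, \]
available in the quaternionic setting via \cite[\S 2.5]{BD96}; the derivative identity $(\sigma_\ell-1)\mathbf{D}_\ell\equiv-\tr_{\mathfrak{G}_\ell}\pmod{p^m}$; and the Kolyvagin congruences $a_\ell\equiv 0$ and $\ell+1\equiv0\pmod{p^m}$, which are forced by $\Frob_\ell=[\tau]$ in $\Gal(K(E_{p^m})/\Q)$. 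Combining these gives $\tr_{H_{m,\ell}^{(p)}/K_m}\alpha_m(\ell)=a_\ell\cdot z_m\equiv 0\pmod{p^m}$, so $(\sigma_\ell-1)[\mathbf{D}_\ell\alpha_m(\ell)]=0$ in $E(H_{m,\ell}^{(p)})/p^mE(H_{m,\ell}^{(p)})$; this $\sigma_\ell$-invariance is what makes the whole construction of $d_m(\ell)$ work.

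For part (1), I fix a prime $v\nmid\ell$ of $K_m$ and a prime $w$ of $H_{m,\ell}^{(p)}$ above $v$, with decomposition group $\mathfrak{G}_{\ell,w}\subset\mathfrak{G}_\ell$. Archimedean places are vacuous because $K_m$ is totally imaginary. In the non-archimedean case the extension $H_{m,\ell,w}^{(p)}/K_{m,v}$ is unramified of $p$-power degree, since the ring class field $H_\ell/H_1$ is ramified only at primes above $\ell$; hence $\res_v(d_m(\ell))$ is the inflation to $K_{m,v}$ of the class represented by $[\mathbf{D}_\ell\alpha_m(\ell)]$ in $H^1\bigl(\mathfrak{G}_{\ell,w},E(H_{m,\ell,w}^{(p)})\bigr)_{p^m}$. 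A standard descent argument for unramified cohomology of elliptic curves (cf.\ \cite[\S 3]{Gross}) then uses the $\sigma_\ell$-invariance established above to realize this inflation as an unramified class in $H^1(K_{m,v},E_{p^m})$ lying in the image of the local Kummer map; it therefore maps to zero in $H^1(K_{m,v},E)_{p^m}$.

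For part (2), I specialize to $v=\ell$. The ring class field $H_\ell/H_1$ is totally ramified at primes above $\ell$, so the same holds for $H_{m,\ell}^{(p)}/K_m$, and $\mathfrak{G}_\ell$ coincides with the inertia subgroup at a prime above $\ell$. The map $\phi_{m,\ell}$ of \eqref{phi-eq}, constructed in \cite[\S 1.4]{Ber1} using the non-degeneracy $p\nmid(\ell+1)^2-a_\ell^2$ of part (3) of Proposition \ref{prop-cheb}, identifies $H^1(K_{m,\ell},E)_{p^m}$ with $E(K_{m,\ell})/p^mE(K_{m,\ell})$ via evaluation of the singular part of a cocycle at a Frobenius lift. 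Unwinding the definition of the Kolyvagin class together with the local reciprocity underlying $\phi_{m,\ell}$, one sees that $\phi_{m,\ell}\bigl(\res_\ell(d_m(\ell))\bigr)$ is represented by the reduction of $\alpha_m(\ell)$ modulo any prime of $H_{m,\ell}^{(p)}$ above $\ell$; the reduction-of-Heegner-points congruence proved in \cite[\S 2]{BD96} identifies this reduction with the reduction of $z_m$, giving the desired equality.

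The main obstacle I expect is the careful bookkeeping of signs and normalizations required to transport the two quaternionic ingredients of \cite{BD96} (the Euler system norm relation and the Heegner reduction congruence at $\ell$) into the exact form in which they feed into the cohomological arguments of \cite{Gross} and \cite{Ber1}. Once this translation is in place, the argument is purely formal and identical to the ordinary-prime case treated in the literature; note in particular that the supersingular assumption on $p$ plays no role in these local computations, since $\ell\neq p$.
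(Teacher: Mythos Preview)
Your sketch is correct and follows exactly the route the paper takes: the paper's proof is nothing more than the citation ``See \cite[Proposition 6.2]{Gross} or \cite[\S 1.4, Proposition 2]{Ber1}'', and what you have written is a faithful outline of those arguments transported to the quaternionic anticyclotomic setting via \cite{BD96}. Two small cosmetic points: your appeal to part (3) of Proposition~\ref{prop-cheb} is slightly misplaced, since the non-degeneracy hypothesis $p^{m+1}\nmid\ell+1\pm a_\ell$ is imposed directly on $\ell$ at the start of \S\ref{kolyvagin-subsec} rather than coming from that proposition; and in part~(1) you should make explicit that the vanishing of the unramified local class in $H^1(K_{m,v},E)$ also covers the primes $v\mid Np$ (good supersingular reduction at $p$ and the bad-reduction primes), which is handled in the cited references but is not automatic from the sketch as written.
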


\begin{proof} See \cite[Proposition 6.2]{Gross} or \cite[\S 1.4, Proposition 2]{Ber1}. \end{proof} 

\subsection{Global duality} \label{duality-sec2}

In this subsection we use Kolyvagin classes, combined with global reciprocity laws, to bound the rank of the $\Lambda$-module $V^\pm(\ell^\pm_\infty)$ that was introduced in \S \ref{duality-sec} and surjects onto $\Lambda x\oplus\Lambda y$.

Fix a sequence of Kolyvagin primes $\ell^\pm_\infty={(\ell^\pm_m)}_{m\geq1}$ as in \S\ref{duality-sec}. For every $m\geq1$ define
\[ d^+_m=d^+_m(\ell^+_\infty):=\begin{cases} d_m(\ell^+_m)&\text{if $m$ is even}\\[2mm]d_{m-1}(\ell^+_{m-1})&\text{if $m$ is odd}\end{cases} \]
and
\[ d^-_m=d^-_m(\ell^-_\infty):=\begin{cases}d_{m-1}(\ell^-_{m-1})&\text{if $m$ is even}\\[2mm]d_m({\ell^-_m})&\text{if $m$ is odd.}\end{cases} \]
From now on, in order to ease the notation write
\[ \mu(m):=\begin{cases} m&\text{if (the sign is $+$ and $m$ is even) or (the sign is $-$ and $m$ is odd)}\\[2mm] m-1&\text{if (the sign is $+$ and $m$ is odd) or (the sign is $-$ and $m$ is even).} \end{cases} \] 
With this convention in force, $d^\pm_{\mu(m)}$ belongs to $H^1\bigl(K_{\mu(m)},E\bigr)_{p^{\mu(m)}}$. Now recall that if $F$ is a number field, $s\in H^1(F,E_{p^m})$ and $t\in H^1(F,E)_{p^m}$ then
 \begin{equation} \label{global-duality}
\sum_{v}{\big\langle\res_v(s),\res_v(t)\big\rangle}_{F,v}=0,
\end{equation}
where $v$ ranges over all finite places of $F$ and ${\langle\cdot,\cdot\rangle}_{F,v}$ is the local Tate pairing at $v$. By part (1) of Proposition \ref{prop-d-ell}, the class $d_{\mu(m)}^\pm$ is trivial at all the primes not dividing $\ell^\pm_{\mu(m)}$, hence equality \eqref{global-duality} implies that
\begin{equation} \label{vanishing}
\Big(\delta_{\ell^\pm_{\mu(m)}}\!\circ\res_{\ell^\pm_{\mu(m)}}\Big)\big(d_{\mu(m)}^\pm\big)=0.
\end{equation}
The morphism in \eqref{def-V(ell)} defining $V^\pm(\ell^\pm_{\mu(m)})$ factors as
\[ H^1\bigl(K_{\mu(m),\ell^\pm_{\mu(m)}},E\bigr)_{p^{\mu(m)}}\longepi H^1\bigl(K_{\mu(m),\ell^\pm_{\mu(m)}},E\bigr)_{p^{\mu(m)}}\big/\bigl(\omega_{\mu(m)}^\pm\bigr)\longepi V^\pm(\ell^\pm_{\mu(m)})\subset\bigl(\Sel_{\mu(m)}^\pm\bigr)^\vee \] 
because the target is $\omega_{\mu(m)}^\pm$-torsion. Define $\mathcal D_{\mu(m)}^\pm$ to be the $R_{\mu(m)}^\pm$-module generated by the image $\res_{\ell^\pm_{\mu(m)}}(d_{\mu(m)}^\pm)$ of $d_{\mu(m)}^\pm$ in $H^1\bigl(K_{\mu(m),\ell^\pm_{\mu(m)}},E\bigr)_{p^{\mu(m)}}\big/\bigl(\omega_{\mu(m)}^\pm\bigr)$. The decomposition in part (2) of Lemma \ref{local-splitting-lemma} induces a decomposition
\[ H^1\Big(K_{\mu(m),\ell^\pm_{\mu(m)}},E\Big)_{p^{\mu(m)}}\Big/\bigl(\omega_{\mu(m)}^\pm\bigr)\simeq\bigl(R_{\mu(m)}^\pm\bigr)^{(\epsilon)}\oplus\bigl(R_{\mu(m)}^\pm\bigr)^{(-\epsilon)}. \]
Now we collect two lemmas that will be used in the proof of Proposition \ref{prop-global-duality} below. First of all, recall the map $\psi^\pm_{\mu(m)}:\E_{\mu(m)}^\pm\rightarrow\bigl(R_{\mu(m)}^\pm\bigr)^{(\epsilon)}$ of Proposition \ref{prop:compatibilities}.

\begin{lemma}\label{lemma5.10}
$\res_{\ell^\pm_{\mu(m)}}\bigl(\E_{\mu(m)}^\pm\bigr)\simeq\psi^\pm_{\mu(m)}\bigl(\E_{\mu(m)}^\pm\bigr)=R_{\mu(m)}^\pm\theta^\pm_{\mu(m)}$ as $\tilde R_{\mu(m)}^\pm$-modules.
\end{lemma}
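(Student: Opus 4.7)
My plan is to identify both $\res_{\ell^\pm_{\mu(m)}}(\E^\pm_{\mu(m)})$ and $\psi^\pm_{\mu(m)}(\E^\pm_{\mu(m)})$ with the image of $\E^\pm_{\mu(m)}$ inside $\overline{\Sigma}^{\pm,(\epsilon)}_{\mu(m)}$ via two different but equally canonical embeddings of this image into larger ambient modules. Recall from the beginning of \S \ref{sec-duals} that $\E^\pm_{\mu(m)}$ injects into $\Sigma^\pm_{\mu(m)}$; composing with the projection $\Sigma^\pm_{\mu(m)}\twoheadrightarrow\overline{\Sigma}^\pm_{\mu(m)}$ gives a canonical $\tilde R^\pm_{\mu(m)}$-equivariant map $\E^\pm_{\mu(m)}\to\overline{\Sigma}^\pm_{\mu(m)}$. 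My first step is to check that its image lies entirely in the summand $\overline{\Sigma}^{\pm,(\epsilon)}_{\mu(m)}$ of the splitting \eqref{eq-Sigma_m}. This is the dual translation of the key property $y\in\ker(\pi^\pm)$ that was used in \S \ref{sec-duals} to construct $\tilde\psi^\pm_{\mu(m)}$: the vanishing of $\bar\pi^\pm_{\mu(m)}\circ\vartheta^\pm_{\mu(m)}$ on $\{0\}\oplus(R^\pm_{\mu(m)})^{(-\epsilon)}$ dualizes to the vanishing of the $(-\epsilon)$-component of $(\vartheta^\pm_{\mu(m)})^\vee$ restricted to $\E^\pm_{\mu(m)}$, which forces the image in $\overline{\Sigma}^\pm_{\mu(m)}$ into the preimage under $(\bar\vartheta^\pm_{\mu(m)})^\vee$ of $(R^\pm_{\mu(m)})^{(\epsilon)}\oplus\{0\}$, namely $\overline{\Sigma}^{\pm,(\epsilon)}_{\mu(m)}$.

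Next I will exploit two canonical embeddings of $\overline{\Sigma}^{\pm,(\epsilon)}_{\mu(m)}$. On one hand, $(\bar\vartheta^\pm_{\mu(m)})^\vee$ embeds $\overline{\Sigma}^{\pm,(\epsilon)}_{\mu(m)}$ into $(R^\pm_{\mu(m)})^{(\epsilon)}$, and under the identification $i^{\pm,(\epsilon)}_{\mu(m)}$ fixed in Proposition \ref{prop:compatibilities} the resulting composite $\E^\pm_{\mu(m)}\to\overline{\Sigma}^{\pm,(\epsilon)}_{\mu(m)}\hookrightarrow(R^\pm_{\mu(m)})^{(\epsilon)}$ is, by the very construction of $\tilde\psi^\pm_{\mu(m)}$, precisely $\psi^\pm_{\mu(m)}$. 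On the other hand, Proposition \ref{prop-cheb}\,(2) provides an injection $\overline{\res}_{\ell^\pm_{\mu(m)}}:\overline{\Sigma}^\pm_{\mu(m)}\hookrightarrow E(K_{\mu(m),\ell^\pm_{\mu(m)}})/p^{\mu(m)}E(K_{\mu(m),\ell^\pm_{\mu(m)}})$ induced by the global restriction at $\ell^\pm_{\mu(m)}$; restricting to $\E^\pm_{\mu(m)}\subset\Sigma^\pm_{\mu(m)}$ shows that $\res_{\ell^\pm_{\mu(m)}}|_{\E^\pm_{\mu(m)}}$ factors through the image of $\E^\pm_{\mu(m)}$ in $\overline{\Sigma}^{\pm,(\epsilon)}_{\mu(m)}$, and the map induced on this image is injective.

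Combining these two identifications will yield the asserted $\tilde R^\pm_{\mu(m)}$-isomorphism $\res_{\ell^\pm_{\mu(m)}}(\E^\pm_{\mu(m)})\simeq\psi^\pm_{\mu(m)}(\E^\pm_{\mu(m)})$, and the final equality $\psi^\pm_{\mu(m)}(\E^\pm_{\mu(m)})=R^\pm_{\mu(m)}\theta^\pm_{\mu(m)}$ is simply the defining property of $\theta^\pm_{\mu(m)}$ from Proposition \ref{prop:compatibilities}. The main subtlety I expect is making sure that every identification is simultaneously equivariant for the $R^\pm_{\mu(m)}$-action and for complex conjugation; on the $\psi^\pm$ side this relies on the $\tau$-compatibility of the isomorphisms in Lemma \ref{isom-lemma}, while on the restriction side it uses the fact that the Kolyvagin primes chosen in \S \ref{sec-families} are inert in $K$, so that complex conjugation acts in a controlled way on the primes above $\ell^\pm_{\mu(m)}$ in $K_{\mu(m)}$.
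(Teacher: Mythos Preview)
Your proposal is correct and follows essentially the same route as the paper's proof: both identify $\res_{\ell^\pm_{\mu(m)}}(\E^\pm_{\mu(m)})$ and $\psi^\pm_{\mu(m)}(\E^\pm_{\mu(m)})$ with the image of $\E^\pm_{\mu(m)}$ in $\overline{\Sigma}^\pm_{\mu(m)}$ (equivalently, with the quotient $\E^\pm_{\mu(m)}\big/\bigl(\E^\pm_{\mu(m)}\cap\ker((\vartheta^\pm_{\mu(m)})^\vee)\bigr)$), using Proposition~\ref{prop-cheb}\,(2) on the restriction side and the construction of $\psi^\pm_{\mu(m)}$ on the other. Your additional remarks on why the image lands in the $(\epsilon)$-summand and on the $\tau$-equivariance are more explicit than the paper's treatment but do not constitute a different argument.
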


\begin{proof} We know from the discussion in \S\ref{sec-duals} that $\E^\pm_{\mu(m)}$ is a submodule of $\Sigma^\pm_{\mu(m)}$, so there is an injection
\begin{equation} \label{equat1}
\E_{\mu(m)}^\pm\big/\bigl(\E^\pm_{\mu(m)}\cap\ker\bigl((\vartheta^\pm_{\mu(m)})^\vee\bigr)\bigr)\longmono\overline\Sigma^\pm_{\mu(m)} 
\end{equation}
where $\bigl(\vartheta^\pm_{\mu(m)}\bigr)^\vee$ is defined in \eqref{theta_m-dual}. Part (2) of Proposition \ref{prop-cheb} shows that composing \eqref{equat1} with the restriction map $\res_{\ell^\pm_{\mu(m)}}$ produces an injection
\[ \E_{\mu(m)}^\pm\big/\bigl(\E^\pm_{\mu(m)}\cap\ker\bigl((\vartheta^\pm_{\mu(m)})^\vee\bigr)\bigr)\longmono E\bigl(K_{\mu(m),\ell^\pm_{\mu(m)}}\bigr)\big/p^{\mu(m)}E\bigl(K_{\mu(m),\ell^\pm_{\mu(m)}}\bigr) \] 
whose image is equal to $\res_{\ell^\pm_{\mu(m)}}\bigl(\E_{\mu(m)}^\pm\bigr)$. Finally, from the definition of $\psi^\pm_{\mu(m)}$ in Proposition \ref{prop:compatibilities} we see that $\psi^\pm_{\mu(m)}\bigl(\E_{\mu(m)}^\pm\bigr)\simeq\E_{\mu(m)}^\pm\big/\bigl(\E^\pm_{\mu(m)}\cap\ker\bigl((\vartheta^\pm_{\mu(m)})^\vee\bigr)\bigr)$, and we are done. \end{proof}

\begin{lemma} \label{lemma5.11}
\begin{enumerate}
\item $\mathcal D_{\mu(m)}^\pm\simeq\bigl(R_{\mu(m)}^\pm\bigr)^{(-\epsilon)}\theta^\pm_{\mu(m)}$ as $\tilde R_{\mu(m)}^\pm$-modules.
\item $\mathcal D_{\mu(m)}^\pm\cap\Big(H^1\bigl(K_{\mu(m),\ell^\pm_{\mu(m)}},E\bigr)_{p^{\mu(m)}}^{(\epsilon)}\big/\bigl(\omega_{\mu(m)}^\pm\bigr)\oplus\{0\}\Big)=\{0\}$. 
\end{enumerate}
\end{lemma}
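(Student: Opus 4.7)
Both parts hinge on the interplay among Proposition \ref{prop-d-ell}(2), Lemma \ref{lemma5.10}, and the $\tau$-antiequivariance of the isomorphism $\phi_{m,\ell}$ from \eqref{phi-eq}. The plan is to first pin down the $\tilde R_{\mu(m)}^\pm$-structure of $\mathcal D_{\mu(m)}^\pm$ by transporting Lemma \ref{lemma5.10} through $\phi_{\mu(m),\ell_{\mu(m)}^\pm}^{-1}$, and then read off part (2) as an immediate consequence.

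For part (1), I would first note that, by the definitions of \S\ref{plus-minus-subsec} and of $d_m^\pm$, one has $z_{\mu(m)}^\pm = z_{\mu(m)}$ and $d_{\mu(m)}^\pm = d_{\mu(m)}(\ell_{\mu(m)}^\pm)$, since $\mu(m)$ is exactly the index whose parity matches the sign convention. Applying Proposition \ref{prop-d-ell}(2) with $m$ and $\ell$ replaced by $\mu(m)$ and $\ell_{\mu(m)}^\pm$ gives
\[ \phi_{\mu(m),\ell_{\mu(m)}^\pm}\!\bigl(\res_{\ell_{\mu(m)}^\pm}(d_{\mu(m)}^\pm)\bigr) = \bigl[\res_{\ell_{\mu(m)}^\pm}(z_{\mu(m)}^\pm)\bigr]. \]
The right-hand side is the local Kummer preimage of $\res_{\ell_{\mu(m)}^\pm}(\alpha_{\mu(m)}^\pm)$, and since $\alpha_{\mu(m)}^\pm$ generates $\mathcal E_{\mu(m)}^\pm$ as an $R_{\mu(m)}^\pm$-module, Lemma \ref{lemma5.10} identifies the $R_{\mu(m)}^\pm$-submodule it generates with $R_{\mu(m)}^\pm\,\theta_{\mu(m)}^\pm$, endowed with the $(\epsilon)$-twisted $\tau$-action inherited from $\psi_{\mu(m)}^\pm$ taking values in $(R_{\mu(m)}^\pm)^{(\epsilon)}$ (cf. Proposition \ref{prop:compatibilities}). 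Pulling back via $\phi_{\mu(m),\ell_{\mu(m)}^\pm}^{-1}$, which is $\tau$-antiequivariant, converts the $\tau$-structure from $(\epsilon)$ to $(-\epsilon)$ while preserving the $R_{\mu(m)}^\pm$-structure, yielding the asserted isomorphism $\mathcal D_{\mu(m)}^\pm \simeq (R_{\mu(m)}^\pm)^{(-\epsilon)}\,\theta_{\mu(m)}^\pm$ of $\tilde R_{\mu(m)}^\pm$-modules.

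For part (2), the decomposition from Lemma \ref{local-splitting-lemma}(2), read modulo $\omega_{\mu(m)}^\pm$, presents the ambient local cohomology as
\[ H^1\bigl(K_{\mu(m),\ell_{\mu(m)}^\pm},E\bigr)_{p^{\mu(m)}}\!\big/(\omega_{\mu(m)}^\pm) \simeq (R_{\mu(m)}^\pm)^{(\epsilon)} \oplus (R_{\mu(m)}^\pm)^{(-\epsilon)}. \]
By part (1), the generator $\res_{\ell_{\mu(m)}^\pm}(d_{\mu(m)}^\pm)$ of $\mathcal D_{\mu(m)}^\pm$ lies entirely in the $(-\epsilon)$-summand, hence so does the whole $R_{\mu(m)}^\pm$-module $\mathcal D_{\mu(m)}^\pm$. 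Its intersection with the $(\epsilon)$-summand is therefore trivial, as desired.

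The main obstacle is the sign bookkeeping: I have to verify carefully that the $(\epsilon)$-twist inherited by $R_{\mu(m)}^\pm\,\theta_{\mu(m)}^\pm$ through $\psi_{\mu(m)}^\pm$ survives the passage through the $\tau$-equivariant local Kummer map from $E/p^{\mu(m)}E$ into $H^1(E_{p^{\mu(m)}})$ and into $H^1(E)_{p^{\mu(m)}}$, and that $\phi_{\mu(m),\ell_{\mu(m)}^\pm}^{-1}$ then flips it precisely to $(-\epsilon)$. If any of these compatibilities were off by a sign, the $\tilde R^\pm$-structure in part (1) would come out wrong and the disjointness in part (2) would fail.
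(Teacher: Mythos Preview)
Your argument for part (1) is essentially correct and matches the paper's: both use Proposition~\ref{prop-d-ell}(2) to transport the problem through the $\tau$-antiequivariant isomorphism $\phi_{\mu(m),\ell^\pm_{\mu(m)}}$, then invoke Lemma~\ref{lemma5.10} to identify the resulting module with $R^\pm_{\mu(m)}\theta^\pm_{\mu(m)}$ carrying the $(\epsilon)$-structure, which flips to $(-\epsilon)$ upon pulling back.

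Part (2), however, contains a genuine gap. You infer from the abstract $\tilde R^\pm_{\mu(m)}$-isomorphism $\mathcal D^\pm_{\mu(m)}\simeq (R^\pm_{\mu(m)})^{(-\epsilon)}\theta^\pm_{\mu(m)}$ that the generator $\res_{\ell^\pm_{\mu(m)}}(d^\pm_{\mu(m)})$ ``lies entirely in the $(-\epsilon)$-summand'' of the decomposition $(R^\pm_{\mu(m)})^{(\epsilon)}\oplus(R^\pm_{\mu(m)})^{(-\epsilon)}$. This does not follow: a cyclic $\tilde R^\pm_{\mu(m)}$-module with the $(-\epsilon)$-twisted structure can embed diagonally, with nonzero projection onto both summands. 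Indeed, in the proof of Proposition~\ref{prop-global-duality} the paper explicitly writes the generator of $\mathcal D^+_{\mu(m)}$ as $\eta^{(\epsilon)}\xi^{(\epsilon)}_{\mu(m)}+\eta^{(-\epsilon)}\xi^{(-\epsilon)}_{\mu(m)}$ with both components a priori nonzero; part (2) is then used only to conclude that $\nu_{\mu(m)}\in R_{\mu(m)}^\times$, not that the $(\epsilon)$-component vanishes.

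The paper's route to (2) instead passes through the commutative diagram built from $\phi$: since $\phi$ swaps the $(\epsilon)$- and $(-\epsilon)$-components, the statement $\mathcal D^\pm_{\mu(m)}\cap\bigl(H^{(\epsilon)}/(\omega^\pm_{\mu(m)})\oplus\{0\}\bigr)=\{0\}$ becomes, on the $M$-side, the injectivity of the projection of $\res_{\ell^\pm_{\mu(m)}}(\E^\pm_{\mu(m)})$ onto the $(\epsilon)$-component of $M/(\omega^\pm_{\mu(m)})$. It is this reformulation, together with Lemma~\ref{lemma5.10} and the injection $M^{\omega^\pm_{\mu(m)}=0}\hookrightarrow M/\omega^\pm_{\mu(m)}M$, that yields (2). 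Your shortcut bypasses this translation and is not valid as stated.
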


\begin{proof} For simplicity set
\[ M:=E\bigl(K_{\mu(m),\ell^\pm_{\mu(m)}}\bigr)\big/p^{\mu(m)}E\bigl(K_{\mu(m),\ell^\pm_{\mu(m)}}\bigr),\qquad H:=H^1\bigl(K_{\mu(m),\ell^\pm_{\mu(m)}},E\bigr)_{p^{\mu(m)}}. \]
We know that $\res_{\ell^\pm_{\mu(m)}}\bigl(\E_{\mu(m)}^\pm\bigr)\subset M^{\omega_{\mu(m)}^\pm=0}$ and that the map $M^{\omega_{\mu(m)}^\pm=0}\rightarrow M\big/\omega_{\mu(m)}^\pm M$ is injective.  On the other hand, the isomorphism $\phi=\phi_{\mu(m),\ell^\pm_{\mu(m)}}$ of \eqref{phi-eq} gives a commutative diagram
\[ \tiny{\xymatrix@C=31pt{
(R_\mu^\pm)^{(\epsilon)}\oplus (R_\mu^\pm)^{(-\epsilon)}\ar@{->>}[d]& \ar[l]_-\simeq
H^{(\epsilon)}/(\omega_\mu^\pm )\oplus H^{(-\epsilon)}/(\omega_\mu^\pm) \ar[r]^-{\phi} &
M^{(-\epsilon)}/(\omega_\mu^\pm)\oplus M^{(\epsilon)}/(\omega_\mu^\pm) \ar[r]^-\simeq  & (R_\mu^\pm)^{(-\epsilon)}\oplus
(R_\mu^\pm)^{(\epsilon)} \ar@{->>}[d]
\\
(R_\mu^\pm)^{(-\epsilon)}\ar[rrr]^-\simeq &&& (R_\mu^\pm)^{(\epsilon)}}} \]
in which we have set $\mu:=\mu(m)$ and the right and left isomorphisms in the top row are a consequence of parts (1) and (2) of Lemma \ref{local-splitting-lemma}, respectively. By part (2) of Proposition \ref{prop-d-ell}, the class of $\res_{\ell^\pm_{\mu(m)}}(d_{\mu(m)}^\pm)$ in $H\big/\bigl(\omega_{\mu(m)}^\pm\bigr)$ is sent to the class of $\big[\res_{\ell^\pm_{\mu(m)}}\bigl(z_{\mu(m)}^\pm\bigr)\bigr]$ in $M\big/\bigl(\omega_{\mu(m)}^\pm\bigr)$. Combining Lemma \ref{lemma5.10} with the diagram above proves (1) and (2) simultaneously. \end{proof}

We are now ready to prove the main result of this subsection.

\begin{proposition} \label{prop-global-duality}
The rank of $V^\pm(\ell^\pm_\infty)$ over $\Lambda$ is at most $1$.
\end{proposition}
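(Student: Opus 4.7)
The plan is to exhibit $V^\pm(\ell^\pm_\infty)$ as a quotient of a $\Lambda$-module of rank at most $1$; the rank-one relation will come from the Kolyvagin class $d^\pm_m$ together with the global reciprocity identity \eqref{vanishing}.

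First I would restrict attention to the cofinal subsequence of indices $m$ with $\mu(m)=m$, so that $d^\pm_m=d_m(\ell^\pm_m)$ is an honest Kolyvagin class in $H^1(K_m,E)_{p^m}$. The defining map \eqref{def-V(ell)} of $V^\pm_m(\ell^\pm_m)$ factors through the quotient $H^1(K_{m,\ell^\pm_m},E)_{p^m}/(\omega^\pm_m)$, which by Lemma \ref{local-splitting-lemma}(2) splits $\tilde R^\pm_m$-equivariantly as $(R^\pm_m)^{(+)}\oplus(R^\pm_m)^{(-)}$. The global duality identity \eqref{vanishing} says precisely that the image of $\res_{\ell^\pm_m}(d^\pm_m)$ dies in $V^\pm_m(\ell^\pm_m)$, so $\mathcal D^\pm_m$ lies in the kernel of the surjection onto $V^\pm_m(\ell^\pm_m)$.

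Next I would invoke Lemma \ref{lemma5.11}: the $\tilde R^\pm_m$-module $\mathcal D^\pm_m$ is isomorphic to $(R^\pm_m)^{(-\epsilon)}\theta^\pm_m$ and meets the $(\epsilon)$-component trivially. Combining this with the previous step produces a canonical surjection
\[
(R^\pm_m)^{(\epsilon)}\,\oplus\,\bigl((R^\pm_m)^{(-\epsilon)}/(R^\pm_m)^{(-\epsilon)}\theta^\pm_m\bigr)\,\longepi\,V^\pm_m(\ell^\pm_m).
\]
The transition maps of Proposition \ref{lemma4.4}(2) are surjective between finite groups, so the Mittag--Leffler condition holds and these surjections pass to the inverse limit; the compatibility of the generators $\theta^\pm_m$ established in Proposition \ref{prop:compatibilities} ensures that $\varprojlim_m(R^\pm_m)^{(-\epsilon)}\theta^\pm_m=\Lambda^{(-\epsilon)}\theta^\pm_\infty$ with $\theta^\pm_\infty\in\Lambda$. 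Therefore
\[
V^\pm(\ell^\pm_\infty)\ \text{is a quotient of}\ \Lambda^{(\epsilon)}\oplus\bigl(\Lambda^{(-\epsilon)}/\Lambda^{(-\epsilon)}\theta^\pm_\infty\bigr).
\]
To finish, it suffices to check that $\theta^\pm_\infty\neq 0$ in $\Lambda$. By construction of $\psi^\pm_m$ in Proposition \ref{prop:compatibilities}, $\theta^\pm_m$ records the image of $x$ in $\mathcal H_m^\pm$ through the pairing between $\E^\pm_m$ and its Pontryagin dual. Since $x$ was chosen with $\pi^\pm(x)\neq 0$ in $\mathcal H^\pm_\infty$, the image of $x$ in $\mathcal H^\pm_m$ is non-zero for $m\gg 0$, whence $\theta^\pm_m\neq 0$ for such $m$ and $\theta^\pm_\infty\neq 0$. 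As $\Lambda$ is a noetherian domain, multiplication by $\theta^\pm_\infty$ is injective, so $\Lambda^{(-\epsilon)}/\Lambda^{(-\epsilon)}\theta^\pm_\infty$ is $\Lambda$-torsion and $V^\pm(\ell^\pm_\infty)$ has $\Lambda$-rank at most $1$.

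The main obstacle is precisely the non-vanishing $\theta^\pm_\infty\neq 0$: this is where the careful bookkeeping of Proposition \ref{prop:compatibilities} pays off, since we need compatibility of the $\theta^\pm_m$ in the inverse system together with the non-triviality encoded in the choice of $x$. A secondary technical point is the passage to the inverse limit along the parity-matching subsequence $\mu(m)=m$; this is harmless in view of the surjectivity in Proposition \ref{lemma4.4}(2), but the image of the Kolyvagin relations $\mathcal D^\pm_m$ in the kernel must be correctly tracked through the transition maps to identify the limit with $\Lambda^{(-\epsilon)}\theta^\pm_\infty$.
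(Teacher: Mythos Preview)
Your overall strategy matches the paper's, but the step where you produce the ``canonical'' surjection
\[
(R^\pm_m)^{(\epsilon)}\,\oplus\,\bigl((R^\pm_m)^{(-\epsilon)}/(R^\pm_m)^{(-\epsilon)}\theta^\pm_m\bigr)\longepi V^\pm_m(\ell^\pm_m)
\]
does not go through as stated. Lemma \ref{lemma5.11} tells you that $\mathcal D^\pm_m$ is \emph{abstractly} isomorphic to $(R^\pm_m)^{(-\epsilon)}\theta^\pm_m$ and has trivial intersection with the $(\epsilon)$-summand; it does \emph{not} say that $\mathcal D^\pm_m=\{0\}\oplus\theta^\pm_m(R^\pm_m)^{(-\epsilon)}$. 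In fact, following the paper's computation, $\mathcal D^\pm_m$ is generated by an element of the form $\theta^\pm_m\bigl(\rho_m\bar\xi^{(\epsilon)}_m+\nu_m\bar\xi^{(-\epsilon)}_m\bigr)$ with $\nu_m\in R_m^\times$ but $\rho_m$ typically non-zero: it is a diagonal submodule. Consequently the ``obvious'' map $(a,[b])\mapsto\text{image of }(a,b)$ is not well-defined (its kernel is $\{0\}\oplus\theta^\pm_m(R^\pm_m)^{(-\epsilon)}$, which is not contained in the kernel of the map onto $V^\pm_m(\ell^\pm_m)$), and the isomorphism identifying the quotient by $\mathcal D^\pm_m$ with your displayed source requires the change of basis $(a,b)\mapsto(a-\rho_m\nu_m^{-1}b,b)$, which depends on $m$. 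Since the Kolyvagin classes $d_m(\ell^\pm_m)$ are built at \emph{different} primes $\ell^\pm_m$ for different $m$, there is no evident compatibility among the $\rho_m,\nu_m$, and your inverse limit step breaks down.

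The paper circumvents this by first multiplying through by $\theta^\pm_m$. After the change of basis one has a splitting whose second summand $W_m$ satisfies $\theta^\pm_m W_m=\mathcal D^\pm_m$; hence $\theta^\pm_m\cdot\bigl((R^\pm_m)^{(\epsilon)}\oplus(R^\pm_m)^{(-\epsilon)}\bigr)$ decomposes as $\theta^\pm_m(R^\pm_m)^{(\epsilon)}\oplus\mathcal D^\pm_m$, and since $\mathcal D^\pm_m$ dies in $V^\pm_m(\ell^\pm_m)$ by \eqref{vanishing}, one concludes that $\theta^\pm_m V^\pm_m(\ell^\pm_m)$ is a cyclic $R^\pm_m$-module. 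Cyclicity is an intrinsic property (independent of the change of basis), so it passes to the inverse limit: $\theta^\pm_\infty V^\pm(\ell^\pm_\infty)$ is cyclic over $\Lambda$, and then the non-vanishing $\theta^\pm_\infty\neq0$ (your argument for this is fine) gives $\mathrm{rank}_\Lambda V^\pm(\ell^\pm_\infty)\leq1$. The missing idea in your write-up is precisely this multiplication by $\theta^\pm_m$, which converts a statement depending on an $m$-varying splitting into one that does not.
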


\begin{proof} We prove the proposition for sign $+$, the other case being similar. For every $m\geq1$ consider the Kolyvagin class $d^+_{\mu(m)}\in H^1\bigl(K_{\mu(m)},E\bigr)_{p^{\mu(m)}}$ defined above and the submodule $\mathcal D_{\mu(m)}^+\subset H^1\bigl(K_{\mu(m),\ell^+_{\mu(m)}},E\bigr)_{p^{\mu(m)}}^{(\pm\epsilon)}\big/\bigl(\omega_{\mu(m)}^+\bigr)$ generated over $R_{\mu(m)}^+$ by $d_{\mu(m)}^+$. Let $\xi_{\mu(m)}^{(\pm\epsilon)}$ denote generators of $H^1\bigl(K_{\mu(m),\ell^+_{\mu(m)}},E\bigr)_{p^{\mu(m)}}^{(\pm\epsilon)}$ as $\tilde R_{\mu(m)}$-modules. The images $\bar\xi_{\mu(m)}^{(\pm\epsilon)}$ of the classes of $\xi_{\mu(m)}^{(\pm\epsilon)}$ are generators of the quotients $H^1\bigl(K_{\mu(m),\ell^+_{\mu(m)}},E\bigr)_{p^{\mu(m)}}^{(\pm\epsilon)}\big/\bigl(\omega_{\mu(m)}^+\bigr)$ as $\tilde R_{\mu(m)}^+$-modules. The image of the cyclic $R_{\mu(m)}^+$-module $\mathcal D_{\mu(m)}^+$ is then generated by the image of
an element of the form $\eta^{(\epsilon)}\xi_{\mu(m)}^{(\epsilon)}+\eta^{(-\epsilon)}\xi_{\mu(m)}^{(-\epsilon)}$ for suitable $\eta^{(\pm\epsilon)}\in R_{\mu(m)}$, and hence isomorphic to the principal $R_{\mu(m)}^+$-module $R_{\mu(m)}^+\bigl(\eta^{(\epsilon)},\eta^{(-\epsilon)}\bigr)$. Using the isomorphism $R_{\mu(m)}^+\simeq\tilde\omega_{\mu(m)}^-R_{\mu(m)}$ of Lemma \ref{isom-lemma}, we see that 
\[ R_{\mu(m)}^+\bigl(\eta^{(\epsilon)},\eta^{(-\epsilon)}\bigr)\simeq R_{\mu(m)}\tilde\omega_{\mu(m)}^-\bigl(\eta^{(\epsilon)},\eta^{(-\epsilon)}\bigr),\qquad R_{\mu(m)}^+\theta^+_{\mu(m)}\simeq R_{\mu(m)}\tilde\omega_{\mu(m)}^-\theta^+_{\mu(m)}. \]
Applying part (1) of Lemma \ref{lemma5.11}, we obtain an isomorphism 
\[ R_{\mu(m)}\tilde\omega_{\mu(m)}^-\theta^+_{\mu(m)}\simeq R_{\mu(m)}\tilde\omega_{\mu(m)}^-\bigl(\eta^{(\epsilon)},\eta^{(-\epsilon)}\bigr). \]
Now \cite[\S 1.2, Lemma 7]{Ber1} shows that $\tilde\omega_{\mu(m)}^-\theta^+_{\mu(m)}\,|\,\tilde\omega_{\mu(m)}^-\bigl(\eta^{(\epsilon)},\eta^{(-\epsilon)}\bigr)$, so there are $\rho_{\mu(m)},\nu_{\mu(m)}\in R_{\mu(m)}$ such that $\tilde\omega_{\mu(m)}^-\bigl(\eta^{(\epsilon)},\eta^{(-\epsilon)}\bigr)=\tilde\omega_{\mu(m)}^-\theta^+_{\mu(m)}\bigl(\rho_{\mu(m)},\nu_{\mu(m)}\bigr)$. Since $\tilde\omega_{\mu(m)}^-R_{\mu(m)}\simeq R_{\mu(m)}^+$, this implies that $\mathcal D_{\mu(m)}^+$ is generated over $R_{\mu(m)}^+$ by the image of an element of the form $\theta^+_{\mu(m)}\bigl(\rho_{\mu(m)},\nu_{\mu(m)}\bigr)$. By part (2) of Lemma \ref{lemma5.11}, we also know that $\nu_{\mu(m)}\in R_{\mu(m)}^\times$. Let us define 
\[ W_{\mu(m)}:=R_{\mu(m)}^+\Big(\rho_{\mu(m)}\omega_{\mu(m)}^-\bar\xi_{\mu(m)}^{(\epsilon)}+\nu_{\mu(m)}\omega_{\mu(m)}^-\bar\xi_{\mu(m)}^{(-\epsilon)}\Big). \] 
Then
\[ H^1\Big(K_{\mu(m),\ell^+_{\mu(m)}},E\Big)_{p^{\mu(m)}}\Big/\big(\omega_{\mu(m)}^+\big)\simeq H^1\Big(K_{\mu(m),\ell^+_{\mu(m)}},E\Big)_{p^{\mu(m)}}^{\!(\epsilon)}\Big/\big(\omega_{\mu(m)}^+\big)\oplus W_{\mu(m)}, \]
from which we deduce that
\[ \theta^+_{\mu(m)}H^1\Big(K_{\mu(m),\ell^+_{\mu(m)}},E\Big)_{p^{\mu(m)}}\simeq\theta^+_{\mu(m)}H^1\Big(K_{\mu(m),\ell^+_{\mu(m)}},E\Big)_{p^{\mu(m)}}^{\!(\epsilon)}\oplus \res_{\ell^+_{\mu(m)}}\big(R_{\mu(m)}d_{\mu(m)}^+\big). \]
Using \eqref{vanishing} we see that the image of $\res_{\ell^+_{\mu(m)}}\big(d_{\mu(m)}^+\big)$ via \eqref{def-V(ell)} is trivial. Thus we get
\[ \delta_{\mu(m),\ell^+_{\mu(m)}}\Big(\theta^+_{\mu(m)}H^1\Big(K_{\mu(m),\ell^+_{\mu(m)}},E\Big)_{p^{\mu(m)}}\Big)\simeq\delta_{\mu(m),\ell^+_{\mu(m)}}\Big(\theta^+_{\mu(m)}H^1\Big(K_{\mu(m),\ell^+_{\mu(m)}},E\Big)_{p^{\mu(m)}}^{\!(\epsilon)}\Big). \] 
Therefore, recalling the definition of $V^+\big(\ell^+_{\mu(m)}\big)$, we conclude that there is an isomorphism of $R_{\mu(m)}^+$-modules
\[ \theta^+_{\mu(m)}V^+\big(\ell^+_{\mu(m)}\big)\simeq\delta_{\mu(m),\ell^+_{\mu(m)}}\Big(\theta^+_{\mu(m)}H^1\Big(K_{\mu(m),\ell^+_{\mu(m)}},E\Big)_{p^{\mu(m)}}^{\!(\epsilon)}\Big). \]
It follows that $\theta^+_{\mu(m)}V^+\big(\ell^+_{\mu(m)}\big)$ is a cyclic $R_{\mu(m)}^+$-module for all $m\geq1$. Since $\theta^+_\infty\in\Lambda$ and $\Lambda=\sideset{}{_m}\invlim R_{\mu(m)}^+$, it follows that the $\Lambda$-module $\theta^+_\infty V^+(\ell^+_\infty)$ is cyclic, and then $V^+(\ell^+_\infty)$ is cyclic over $\Lambda$ as well. \end{proof}

\subsection{Completion of the proof of Theorem \ref{thm4.1}} \label{completion-subsec}

Recall from the beginning of Section \ref{sec5} that our goal is to show that the element $y\in\ker(\pi^\pm)$ is $\Lambda$-torsion. But this is immediate: the $\Lambda$-module $\Lambda x$ is free of rank $1$ because $x$ is not $\Lambda$-torsion, hence combining Propositions \ref{prop4.5} and \ref{prop-global-duality} shows that $\Lambda y$ is $\Lambda$-torsion, which concludes the proof. 

We remark that the arguments described above give also a proof of 

\begin{corollary} 
The $\Lambda$-module $V^\pm(\ell^\pm_\infty)$ has rank $1$.
\end{corollary}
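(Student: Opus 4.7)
The plan is to sandwich the $\Lambda$-rank of $V^\pm(\ell^\pm_\infty)$ between two bounds already established in this subsection. Proposition \ref{prop-global-duality} supplies the ceiling $\mathrm{rank}_\Lambda V^\pm(\ell^\pm_\infty) \le 1$, so it suffices to produce a matching lower bound.

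For the lower bound I would invoke Proposition \ref{prop4.5}, which exhibits a surjection of $\Lambda$-modules $V^\pm(\ell^\pm_\infty) \twoheadrightarrow \Lambda x \oplus \Lambda y$. Tensoring with the field of fractions of $\Lambda$ (an exact operation that preserves surjectivity over the noetherian domain $\Lambda$) gives
\[ \mathrm{rank}_\Lambda V^\pm(\ell^\pm_\infty) \ge \mathrm{rank}_\Lambda(\Lambda x \oplus \Lambda y) \ge \mathrm{rank}_\Lambda(\Lambda x). \]
Now, by the very choice of $x$ made at the beginning of Section \ref{sec5} — a choice legitimized by the surjectivity of $\pi^\pm$ together with the rank-one statement of Proposition \ref{H-rank-prop} — the element $x$ is not $\Lambda$-torsion. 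Hence $\Lambda x$ is a cyclic $\Lambda$-module of rank $1$, which yields $\mathrm{rank}_\Lambda V^\pm(\ell^\pm_\infty) \ge 1$, and combined with Proposition \ref{prop-global-duality} the corollary follows at once.

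I do not foresee any genuine obstacle here: the statement is a direct byproduct of the work already carried out in proving Theorem \ref{thm4.1}, relying only on Propositions \ref{prop4.5} and \ref{prop-global-duality} together with the non-torsion element $x$ fixed at the outset of the section. The only point requiring a small amount of care is to check that the lower bound on the rank depends \emph{solely} on $x$ and not on $y$, so that the argument remains logically independent of the conclusion (proved in \S\ref{completion-subsec}) that $y$ is $\Lambda$-torsion — and thus avoids any apparent circularity.
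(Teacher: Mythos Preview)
Your proposal is correct and matches the paper's approach exactly: the paper simply remarks that ``the arguments described above give also a proof'' of the corollary, meaning precisely the combination of Proposition \ref{prop-global-duality} (upper bound $\le 1$) with Proposition \ref{prop4.5} and the non-torsion choice of $x$ (lower bound $\ge 1$). Your observation that only $x$ is needed for the lower bound is a nice clarification, though by the placement of the corollary after \S\ref{completion-subsec} the paper has already established that $y$ is torsion, so no circularity arises in any case.
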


\section{Applications to Selmer and Mordell--Weil groups}

As an application of Theorem \ref{thm4.1}, in this final section we prove results on the growth of Selmer and Mordell--Weil groups along the finite layers of $K_\infty/K$.

\subsection{Growth of $\Z_p$-coranks of Selmer groups} \label{applications-subsec}

In this and the next subsection it will be convenient to use the ``big O'' notation: given two functions $f,g:\N\rightarrow\C$, we write $f(m)=g(m)+O(1)$ if $|f(m)-g(m)|$ is bounded by a constant that does not depend on $m$.

\begin{theorem} \label{corank-coro}
If $D=1$ then $\mathrm{corank}_{\Z_p}\bigl(\Sel_{p^\infty}(E/K_m)\bigr)=p^m+O(1)$. 
\end{theorem}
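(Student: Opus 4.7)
The plan is to deduce Theorem \ref{corank-coro} from Theorem \ref{thm4.1} by combining the Iovita--Pollack control theorem (Theorem \ref{CT-thm}) with corank formulas from \cite{IP} that relate the classical Selmer group to its plus/minus counterparts, together with \c{C}iperiani's cotorsion results from \cite{Cip}. Note that, as already emphasized in the introduction, the $\Lambda$-corank computation of $\Sel_{p^\infty}(E/K_\infty)$ in \cite{Cip} is by itself insufficient, because of the failure of classical Mazur control in the supersingular setting.

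First I would apply the structure theorem for finitely generated $\Lambda$-modules to $\mathcal X_\infty^\pm$, which has rank $1$ by Theorem \ref{thm4.1}, to obtain pseudo-isomorphisms $\mathcal X_\infty^\pm \sim \Lambda \oplus T^\pm$ with $T^\pm$ finitely generated torsion $\Lambda$-modules. This yields
\[
\mathrm{rank}_{\Z_p}\bigl(\mathcal X_\infty^\pm/\omega_m^\pm\mathcal X_\infty^\pm\bigr) = \deg(\omega_m^\pm) + O(1),
\]
where the contribution $\mathrm{rank}_{\Z_p}(T^\pm/\omega_m^\pm T^\pm)$ stabilizes once the new cyclotomic factors $\Phi_n(1+X)$ entering the $\omega_m^\pm$ are coprime to the characteristic polynomial of $T^\pm$, which holds for $m\gg 0$. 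Next, the dual control map \eqref{coro-CT} has kernel uniformly bounded in $m$, so after Pontryagin duality one obtains
\[
\mathrm{corank}_{\Z_p}\bigl(\Sel_{p^\infty}^\pm(E/K_m)^{\omega_m^\pm=0}\bigr) = \deg(\omega_m^\pm) + O(1).
\]

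To pass from the $\omega_m^\pm$-fixed pieces to the full classical Selmer group, I would invoke the corank formulas of Iovita--Pollack from \cite[\S 6]{IP}. Under Assumptions \ref{ass}--\ref{ass3} with $D=1$, these assemble the plus/minus Selmer groups at the two (totally ramified) primes $\p$ and $\bar\p$ of $K$ above $p$ into the classical Selmer group up to $O(1)$ error, yielding
\[
\mathrm{corank}_{\Z_p}\bigl(\Sel_{p^\infty}(E/K_m)\bigr) = \deg(\omega_m^+) + \deg(\omega_m^-) + O(1).
\]
The elementary identity $\deg(\omega_m^+) + \deg(\omega_m^-) = p^m + 1$, a direct consequence of $\omega_m = \omega_m^\pm \cdot \tilde\omega_m^\mp$ together with $\deg(\omega_m) = p^m$, then gives Theorem \ref{corank-coro}.

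The main obstacle I anticipate is the torsion bookkeeping: making precise that $T^\pm/\omega_m^\pm T^\pm$ contributes only an $O(1)$ $\Z_p$-rank requires a coprimality analysis between the elementary divisors of the torsion parts of $\mathcal X_\infty^\pm$ and the cyclotomic factors $\Phi_n(1+X)$ entering $\omega_m^\pm$. A related delicate point, which the Iovita--Pollack corank formula must absorb, is that $\Sel_{p^\infty}^\pm(E/K_m)$ need not coincide with its $\omega_m^\pm$-fixed subgroup, and the discrepancy between the two must be shown to be bounded independently of $m$; this is exactly where \c{C}iperiani's cotorsion result on $\Sha_{p^\infty}(E/K_\infty)$ plays a role, by ensuring that no unbounded Shafarevich--Tate contribution obstructs the plus/minus decomposition at the level of $\Z_p$-coranks.
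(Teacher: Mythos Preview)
Your proposal is correct and follows essentially the same route as the paper, though the paper compresses the argument by citing the Iovita--Pollack machinery as a black box. Specifically, the paper invokes \c{C}iperiani's result \cite[Theorem 3.1]{Cip} that $\mathrm{corank}_\Lambda\bigl(\Sel_{p^\infty}(E/K_\infty)\bigr)=2$, which by \cite[Proposition 6.1]{IP} verifies Hypothesis (W) of \cite[\S 6.1]{IP}; then Theorem \ref{thm4.1} together with \cite[Proposition 7.1]{IP} yields the formula directly. Your structure-theorem computation of $\mathrm{corank}_{\Z_p}\bigl(\mathcal X_\infty^\pm/\omega_m^\pm\mathcal X_\infty^\pm\bigr)$, the passage through the control theorem, and the reassembly via $\deg(\omega_m^+)+\deg(\omega_m^-)=p^m+1$ are precisely what lies inside \cite[Proposition 7.1]{IP}.

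One small clarification on the role of \c{C}iperiani: the paper uses the $\Lambda$-corank computation $\mathrm{corank}_\Lambda\bigl(\Sel_{p^\infty}(E/K_\infty)\bigr)=2$ rather than the $\Sha$-cotorsion statement you cite. The two are closely related in \cite{Cip}, but it is the former that \cite[Proposition 6.1]{IP} converts into Hypothesis (W), and it is Hypothesis (W) that controls the discrepancies you flag (between $\Sel_{p^\infty}^\pm(E/K_m)$ and its $\omega_m^\pm$-fixed part, and between the plus/minus pieces and the full Selmer group). Your concern about the torsion contribution $T^\pm/\omega_m^\pm T^\pm$ is easily dispatched: since $T^\pm$ is $\Lambda$-torsion and finitely generated, its $\Z_p$-rank is finite and bounds $\mathrm{rank}_{\Z_p}(T^\pm/\omega_m^\pm T^\pm)$ uniformly in $m$, so no coprimality analysis is actually needed.
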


\begin{proof} By \cite[Theorem 3.1]{Cip}, $\mathrm{corank}_\Lambda\big(\Sel_{p^\infty}(E/K_\infty)\big)=2=[K:\Q]$, so \cite[Proposition 6.1]{IP} guarantees that Hypothesis (W) of \cite[\S 6.1]{IP} holds in our setting. Moreover, by Theorem \ref{thm4.1} we know that $\mathrm{rank}_\Lambda(\mathcal X_\infty^+)=\mathrm{rank}_\Lambda(\mathcal X_\infty^-)=1$, and the desired formula follows from \cite[Proposition 7.1]{IP}. \end{proof}

\begin{remark}
Once \cite[Theorem 3.1]{Cip} is extended to the case where $D>1$, the assumption ``$D=1$'' in Theorem \ref{corank-coro} (and in Corollary \ref{rank-coro} below) can be dropped.
\end{remark}

Theorem \ref{corank-coro} proves \cite[Conjecture 2.1]{Ber2} when $p$ is a supersingular prime for $E$ (subject to the conditions of Assumption \ref{ass}) and $K_\infty$ is the anticyclotomic $\Z_p$-extension of $K$ (since we are assuming that $E$ has no complex multiplication, in the terminology of \cite{Ber2} we are in the ``generic'' case). The counterpart of this result for ordinary primes (\cite[Lemma 4.4]{Ber2}) is a consequence of a combination of Mazur's ``control theorem'' (\cite{Maz1}; see also \cite[Theorem 4.1]{Greenberg}) with \cite[Theorem A]{Ber1} and \cite[Theorem, p. 496]{Co}.

\begin{remark} \label{corank-remark}
It is worth pointing out that a result like the one in \cite[Theorem 3.1]{Cip} alone does not seem to yield the asymptotic growth of $\mathrm{corank}_{\Z_p}\bigl(\Sel_{p^\infty}(E/K_m)\bigr)$ that was described in Theorem \ref{corank-coro}. This insufficiency is accounted for by the failure, in the supersingular case, of the control theorem in its ``classical'' form. More precisely, what one can prove by combining the equality $\mathrm{corank}_\Lambda\big(\Sel_{p^\infty}(E/K_\infty)\big)=2$ with standard Iwasawa-theoretic arguments is that $\mathrm{corank}_{\Z_p}\bigl(\Sel_{p^\infty}(E/K_\infty)^{\Gamma_m}\bigr)=2p^m+O(1)$ (cf. \cite[p. 457]{Greenberg} for details).
\end{remark}

\begin{remark}
Theorem \ref{corank-coro} could also be obtained independently of Theorem \ref{thm4.1} by using the results of \cite{BD-Crelle} as in \cite[\S 2.2]{Cip}, provided we knew that $L'(E_{/K},\chi,1)\not=0$ for all but finitely many finite order characters $\chi:G_\infty\rightarrow\C^\times$. Unfortunately, the strongest non-vanishing result that we are aware of is \cite[Theorem 1.5]{CV}, which holds only for infinitely many $\chi$. 
\end{remark}

\subsection{Growth of Mordell--Weil ranks}	

In the following, let $\Sha_{p^\infty}(E/K_m)$ denote the $p$-primary Shafarevich--Tate group of $E$ over $K_m$. The usual relations between Mordell--Weil, Selmer and Shafarevich--Tate groups of elliptic curves over number fields lead to

\begin{corollary} \label{rank-coro}
If $D=1$ and $\Sha_{p^\infty}(E/K_m)$ is finite for $m\gg0$ then $\mathrm{rank}_{\Z}\bigl(E(K_m)\bigr)=p^m+O(1)$. 
\end{corollary}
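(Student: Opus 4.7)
The plan is to deduce the corollary from Theorem \ref{corank-coro} by means of the standard short exact sequence
\[ 0\longrightarrow E(K_m)\otimes\Q_p/\Z_p\longrightarrow\Sel_{p^\infty}(E/K_m)\longrightarrow\Sha_{p^\infty}(E/K_m)\longrightarrow 0 \]
coming from the definitions of the Selmer and Shafarevich--Tate groups. The idea is to take $\Z_p$-coranks in this sequence for $m$ large enough so that $\Sha_{p^\infty}(E/K_m)$ is finite, and then translate the corank of $E(K_m)\otimes\Q_p/\Z_p$ into the $\Z$-rank of $E(K_m)$.

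More precisely, first I would note that by the Mordell--Weil theorem $E(K_m)$ is a finitely generated abelian group, so there is an isomorphism
\[ E(K_m)\otimes\Q_p/\Z_p\simeq(\Q_p/\Z_p)^{\mathrm{rank}_\Z(E(K_m))}\oplus(\text{finite group}), \]
whence $\mathrm{corank}_{\Z_p}\bigl(E(K_m)\otimes\Q_p/\Z_p\bigr)=\mathrm{rank}_\Z\bigl(E(K_m)\bigr)$. Next, for $m\gg0$ the hypothesis gives $\mathrm{corank}_{\Z_p}\bigl(\Sha_{p^\infty}(E/K_m)\bigr)=0$, so additivity of $\Z_p$-coranks in the short exact sequence above yields
\[ \mathrm{rank}_\Z\bigl(E(K_m)\bigr)=\mathrm{corank}_{\Z_p}\bigl(\Sel_{p^\infty}(E/K_m)\bigr). \]
Finally, applying Theorem \ref{corank-coro} (which requires $D=1$) gives the asymptotic $\mathrm{rank}_\Z\bigl(E(K_m)\bigr)=p^m+O(1)$. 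The constants absorbed by $O(1)$ account both for the bounded term already appearing in Theorem \ref{corank-coro} and for the finitely many initial values of $m$ for which $\Sha_{p^\infty}(E/K_m)$ might fail to be finite (since these contribute only a bounded discrepancy).

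There is no real obstacle here: the argument is entirely formal once Theorem \ref{corank-coro} is in hand. The only subtle point is simply the bookkeeping of $\Z_p$-coranks versus $\Z$-ranks, which is handled by the Mordell--Weil theorem as above.
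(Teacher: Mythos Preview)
Your proposal is correct and follows essentially the same approach as the paper: both use the short exact sequence relating Mordell--Weil, Selmer, and Shafarevich--Tate groups, observe that finiteness of $\Sha_{p^\infty}(E/K_m)$ forces $\mathrm{corank}_{\Z_p}\bigl(E(K_m)\otimes\Q_p/\Z_p\bigr)=\mathrm{corank}_{\Z_p}\bigl(\Sel_{p^\infty}(E/K_m)\bigr)$, identify the left side with $\mathrm{rank}_\Z\bigl(E(K_m)\bigr)$, and then invoke Theorem \ref{corank-coro}. One tiny inaccuracy: since $\Q_p/\Z_p$ is divisible, tensoring kills the torsion part of $E(K_m)$, so the ``finite group'' summand you wrote is actually zero --- but this has no effect on the argument.
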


\begin{proof} If $\Sha_{p^\infty}(E/K_m)$ is finite then $E(K_m)\otimes\Q_p/\Z_p$ has finite index in $\Sel_{p^\infty}(E/K_m)$, hence $\mathrm{corank}_{\Z_p}\bigl(E(K_m)\otimes\Q_p/\Z_p\bigr)=\mathrm{corank}_{\Z_p}\bigl(\Sel_{p^\infty}(E/K_m)\bigr)$. On the other hand, $\mathrm{rank}_{\Z}\bigl(E(K_m)\bigr)=\mathrm{corank}_{\Z_p}\bigl(E(K_m)\otimes\Q_p/\Z_p\bigr)$, and the searched-for formula follows from Theorem \ref{corank-coro}. \end{proof}

\bibliographystyle{amsplain}
\bibliography{Iwasawa}
\end{document}